\documentclass[11pt,letterpaper,reqno]{amsart}

\usepackage[T1]{fontenc}
\usepackage[utf8]{inputenc}
\usepackage{amsmath,amssymb,amsfonts,mathrsfs,mathtools}
\usepackage{amsthm}
\usepackage{enumitem}
\usepackage[margin=1.08in]{geometry}
\usepackage{aliascnt}
\usepackage[colorlinks=true,linkcolor=blue,citecolor=blue,urlcolor=blue]{hyperref}
\usepackage[nameinlink,capitalize]{cleveref}
\usepackage[protrusion=true,expansion=false]{microtype}
\usepackage[nocompress]{cite}
\usepackage{graphicx}
\usepackage{tikz}
\usepackage{placeins}
\usetikzlibrary{arrows.meta,calc,positioning}

\allowdisplaybreaks
\numberwithin{equation}{section}
\newtheorem{theorem}{Theorem}[section]
\newaliascnt{proposition}{theorem}
\newtheorem{proposition}[proposition]{Proposition}
\aliascntresetthe{proposition}
\newaliascnt{lemma}{theorem}
\newtheorem{lemma}[lemma]{Lemma}
\aliascntresetthe{lemma}
\newaliascnt{corollary}{theorem}
\newtheorem{corollary}[corollary]{Corollary}
\aliascntresetthe{corollary}
\newaliascnt{claim}{theorem}

\aliascntresetthe{claim}
\theoremstyle{definition}
\newaliascnt{definition}{theorem}
\newtheorem{definition}[definition]{Definition}
\aliascntresetthe{definition}
\theoremstyle{remark}
\newaliascnt{remark}{theorem}
\newtheorem{remark}[remark]{\textbf{Remark}}
\aliascntresetthe{remark}

\newtheorem*{question}{\textbf{Question}}
\crefname{claim}{Claim}{Claims}
\Crefname{claim}{Claim}{Claims}
\newcommand{\R}{\mathbb R}
\newcommand{\N}{\mathbb N}
\newcommand{\T}{\mathbb T}

\newcommand{\cB}{\mathcal B}
\newcommand{\cK}{\mathcal K}
\newcommand{\cR}{\mathcal R}
\newcommand{\cS}{\mathcal S}

\newcommand{\eps}{\varepsilon}
\newcommand{\dd}{\,\mathrm d}
\newcommand{\supp}{\operatorname{supp}}
\newcommand{\osc}{\mathrm{osc}}
\newcommand{\rad}{\mathrm{rad}}
\newcommand{\err}{\mathrm{err}}

\newcommand{\pv}{\operatorname{p.v.}}
\newcommand{\evarphi}{e_{\varphi}}
\newcommand{\er}{e_r}
\newcommand{\Norm}[1]{\left\lVert#1\right\rVert}

\title[Gap loss for SQG]
{Instantaneous gap loss of Sobolev regularity\\
	for the surface quasi-geostrophic equation}
	
\author{Qionglei Chen}
\address{Qionglei Chen
	\newline \indent
	Institute of Applied Physics and Computational Mathematics, 100088 Beijing, China}
\email{chen\_qionglei@iapcm.ac.cn}

\author{Changxing Miao}
\address{Changxing Miao 
	\newline \indent 
	School of Mathematics and Physics, University of Science and Technology Beijing, 100083 Beijing, China}
\email{miao\_changxing@ustb.edu.cn, miao\_changxing@iapcm.ac.cn}

\author{Yaowei Xie}
\address{Yaowei Xie
	\newline \indent
	Institute of Applied Physics and Computational Mathematics, 100088 Beijing, China}
\email{mathxyw@163.com}

\begin{document}

\begin{abstract}
	We prove instantaneous loss of Sobolev regularity across a fixed
	gap for the inviscid surface quasi-geostrophic equation on
	$\mathbb R^2$. 
	More precisely, for every $1<s<2$, $T>0$ and
	$\varepsilon>0$, we construct initial data
	$\theta_0\in H^s(\mathbb R^2)$ with
	$\|\theta_0\|_{H^s}\leq\varepsilon$ that generate a solution
	$\theta$ on $[0,T]$ such that
	\[
	\theta\in L^\infty([0,T];H^{1+\gamma}(\mathbb R^2)),
	\quad
	\theta(t)\notin H^\sigma(\mathbb R^2)
	\quad\text{for every }t\in(0,T],\ \sigma>\sigma_*(s),
	\]
	where $\gamma>0$ and $\sigma_*(s)$ satisfy
	\[
	1+\gamma<\sigma_*(s):=
	\frac{s(3-s)}{1+2s-s^2}<s.
	\]
	The solution is unique in a determined family of classical solutions with initial datum $\theta_0$.
\end{abstract}


\maketitle

\noindent {\bf MSC(2020):} 35Q35, 35R25 \quad
\vskip 0.02cm
\noindent {\bf Keywords:} Surface quasi-geostrophic equation, Instantaneous gap loss of regularity, Ill-posedness

\section{Introduction}

\subsection{Background}

We consider the inviscid surface quasi-geostrophic (SQG) equation on
$\R^2$, an active scalar model arising in atmospheric and oceanic
dynamics; see \cite{Pedlosky,HeldPierrehumbert,CMT,MajdaTabak}:
\begin{equation}\label{eq:SQG}
	\begin{cases}
		\partial_t\theta+u[\theta]\cdot\nabla\theta=0,
		& (x,t)\in\R^2\times(0,\infty),\\[1mm]
		u[\theta]=\nabla^\perp\Lambda^{-1}\theta
		=\cR^\perp\theta,
		\quad \nabla\cdot u[\theta]=0,\\[1mm]
		\theta(\cdot,0)=\theta_0.
	\end{cases}
\end{equation}
Here $\Lambda=(-\Delta)^{\frac12}$, $\cR=(\cR_1,\cR_2)$ denotes the
vector of Riesz transforms, and
\[
\nabla^\perp:=(-\partial_2,\partial_1),
\quad\cR^\perp:=(-\cR_2,\cR_1).
\]
Equivalently,
\begin{equation}\label{Biot}
	u[\theta](x)
	=c\,\pv\int_{\R^2}
	\frac{(x-y)^\perp}{|x-y|^3}\theta(y)\dd y,
\end{equation}
where $c$ is a dimensional constant.

The standard energy estimate for the transport equation depends on
the Lipschitz norm of the velocity. Since
$\theta\mapsto u=\cR^\perp\theta$ is a homogeneous Fourier multiplier
of order zero, the
$L^2$ scale in two dimensions is classified as follows:
\[
H^r(\R^2)\ \text{is}\ 
\begin{cases}
	\text{subcritical}, & r>2,\\
	\text{critical}, & r=2,\\
	\text{supercritical}, & r<2.
\end{cases}
\]
The corresponding critical space on the $L^\infty$ scale is
$W^{1,\infty}$. We review the known well-posedness and ill-posedness
results in these regimes.

\medskip
\textbf{Subcritical regime.}
Constantin, Majda and Tabak \cite{CMT} established local
well-posedness in $H^r(\R^2)$ for $r>2$. Wu \cite{WuHolder} proved
local well-posedness in $C^{1,\alpha}(\R^2)\cap L^q(\R^2)$ for
$0<\alpha<1$ and $1<q<\infty$. It remains open whether solutions of the inviscid SQG equation \eqref{eq:SQG} with smooth initial data can become singular in finite time.

\medskip
\textbf{Critical regime.} 
The first ill-posedness result for the inviscid SQG equation \eqref{eq:SQG} 
was obtained by Elgindi and Masmoudi \cite{ElgindiMasmoudi},
who proved mild ill-posedness in $W^{1,\infty}(\R^2)$ near
stationary solutions. C\'ordoba and
Mart\'inez-Zoroa \cite{CMOSQG} established norm inflation in $H^2(\R^2)$.
Recently, Jeong and Kim \cite{JeongKimSQG} proved both norm
inflation and nonexistence in $H^2(\T^2)$. 

\medskip
\textbf{Supercritical regime.}
C\'ordoba and Mart\'inez-Zoroa \cite{CMOSQG} proved norm inflation
and nonexistence in $H^s(\R^2)$ with $\frac32<s<2$.
The same authors together with
O\.{z}a\'nski \cite{CMOcontinuous} also established norm
inflation and continuous loss of Sobolev regularity in this
range. To the best of our knowledge, there were previously no results on norm inflation or loss of Sobolev regularity in 
$H^s(\R^2)$ with $1<s\leq\frac32$.

For the incompressible Euler equation, norm inflation was
established by Bourgain and Li \cite{BourgainLi} in the critical
space and by Luo \cite{LuoSupercritical} in
supercritical spaces.
Several recent works have studied loss of regularity, which is a stronger form of ill-posedness than norm inflation. In two dimensions, C\'ordoba,
Mart\'inez-Zoroa and O\.{z}a\'nski \cite{EulerGap} constructed
solutions with $u(0)\in H^s(\R^2)$, $1<s<2$, for which
\[
u(t)\notin H^{s'}(\R^2)
\quad\text{for every }t>0
\text{ and every }s'>1+\frac{(3-s)(s-1)}{1+2s-s^2}.
\]
Jeong \cite{JeongEulerLoss} also proved loss in the
integrability exponent. In three dimensions, Jeong,
Mart\'inez-Zoroa and O\.{z}a\'nski \cite{JMZO25} obtained
continuous loss of Sobolev regularity.

For the inviscid SQG equation, C\'ordoba,
Mart\'inez-Zoroa and O\.{z}a\'nski \cite{CMOcontinuous}
constructed compactly supported initial data with arbitrarily
small \(H^s\) norm, \(3/2<s<2\), for which the corresponding
local-in-time solution remains compactly supported and satisfies
\[
\theta(t)\in H^{\frac{s}{1+\bar c t}}
\quad\text{and}\quad
\theta(t)\notin H^\sigma
\quad\text{for every }
\sigma>\frac{s}{1+\bar c t}.
\]
The solution is unique in a specified
class of classical solutions.

This leads to the following question.

\begin{question}
Is there an initial datum in a supercritical Sobolev space
for which the corresponding solution of \eqref{eq:SQG}
is unique and exhibits instantaneous gap loss of regularity?
\end{question}

The main result of this paper gives a positive answer. More precisely,
for every $1<s<2$, we construct arbitrarily small initial data
in $H^s(\R^2)$ whose solution exhibits instantaneous loss of
regularity across a fixed Sobolev gap and is unique
in the class of \cref{def:classical}.

\subsection{Main results}
For $1<s<2$, define
\begin{equation*}
	\sigma_*(s):=\frac{s(3-s)}{1+2s-s^2}.
\end{equation*}
A direct computation gives $1<\sigma_*(s)<s$.

For norm inflation, we have the following theorem.

\begin{theorem}
	\label{thm:block-inflation}
	Let $1<s<2$. For any $T>0$, $\tau\in(0,T)$,
	$\eps>0$, $M>0$ and $\sigma>\sigma_*(s)$, there exists
	$\theta_0\in C_c^\infty(\R^2)$ with
	\[
	\Norm{\theta_0}_{H^s}\leq\eps
	\]
	such that the corresponding smooth solution $\theta$ of
	\eqref{eq:SQG} exists on $[0,T]$ and satisfies
	\begin{equation}\label{eq:block-inflation}
		\Norm{\theta(t)}_{H^\sigma}\geq M
		\quad\text{for all }t\in[\tau,T].
	\end{equation}
\end{theorem}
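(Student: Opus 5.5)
We will prove \cref{thm:block-inflation} by a two-scale construction in the spirit of C\'ordoba--Mart\'inez-Zoroa--O\.{z}a\'nski for 2D Euler. A large-scale, radially symmetric ``background'' profile generates a smooth shear, and a small, highly oscillatory perturbation is stretched by it. Concretely, we take
\[
\theta_0 = \theta_{\mathrm{rad}} + \theta_{\mathrm{pert}}, \qquad
\theta_{\mathrm{pert}}(x) = \lambda^{-s}N^{-\beta}\,\phi(x)\cos(N x_1)\,g_\lambda(x),
\]
with the following ingredients:
\begin{itemize}[leftmargin=2em]
	\item $\theta_{\mathrm{rad}}$ is a smooth compactly supported radial function. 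Radial functions are stationary for SQG, and $\theta_{\mathrm{rad}}$ induces the azimuthal velocity $u_{\mathrm{rad}} = v(r)\,\evarphi$, whose angular velocity $v(r)/r$ has nonvanishing derivative on an annulus.
	\item The parameters are a frequency $N\gg1$, a localization scale $\lambda^{-1}$, and an amplitude exponent $\beta$.
	\item The bump $g_\lambda$ localizes the perturbation to a ball of radius $\lambda^{-1}$ inside that annulus.
\end{itemize}
To keep $\Norm{\theta_0}_{H^s}\le\eps$, the background amplitude is scaled so that the shear rate is $\eta\sim\eps$. Alternatively, one rescales so that the shear rate is order one on a small spatial scale while the $H^s$ norm stays small, using that $s<2$ is supercritical: a profile $A\,\Theta(x/\ell)$ has shear rate $\sim A/\ell$ and $H^s$ norm $\sim A\ell^{1-s}$. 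This rescaling is where the restriction $s>1$ is used to keep the background in $H^s$ with order-one Lipschitz velocity. Then $\Norm{\theta_{\mathrm{rad}}}_{H^s}\le \eps/2$ while $\Norm{\nabla u_{\mathrm{rad}}}_{L^\infty}\gtrsim 1$.

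The argument proceeds in three steps.
\begin{enumerate}[label=\textbf{Step \arabic*.},leftmargin=*,align=left]
	\item \emph{Linearized evolution.} The perturbation is first evolved by transport along $u_{\mathrm{rad}}$. Differential rotation shears the wave vector, so the frequency grows linearly: $N(t)\sim N(1+ct)$, where $c\gtrsim1$ for $t\ge\tau$. Since transport preserves $L^2$, this gives
	\[
	\Norm{\theta_{\mathrm{lin}}(t)}_{H^\sigma}\sim \Norm{\theta_{\mathrm{pert}}}_{L^2}\,N(t)^\sigma .
	\]
	\item \emph{Choice of parameters.} The parameters are chosen so that three requirements hold simultaneously:
	\begin{itemize}[leftmargin=2em]
		\item the $H^s$ norm of $\theta_{\mathrm{pert}}$, of size $\sim(\text{amplitude})\,N^s\lambda^{-1}$, is at most $\eps/2$;
		\item the self-induced velocity of the perturbation, $\cR^\perp\theta_{\mathrm{pert}}$, has Lipschitz norm $\ll1$ on $[0,T]$, so that the background shear dominates and the linear approximation remains valid;
		\item the $H^\sigma$ norm at time $\tau$ exceeds $M$.
	\end{itemize}
	Optimizing the three exponents gives exactly the threshold $\sigma>\sigma_*(s)$. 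The balance is between the amplitude of the oscillation, the localization $\lambda$, and the growth of the wave number. Since this optimization mirrors the Euler computation in \cite{EulerGap} with $u$ replaced by $\theta$, we expect the same exponent algebra up to a one-derivative shift. Indeed, $1+\frac{(3-s)(s-1)}{1+2s-s^2}=\frac{s(3-s)}{1+2s-s^2}$, which is precisely $\sigma_*(s)$.
	\item \emph{Nonlinear stability.} We write $\theta=\theta_{\mathrm{rad}}+\theta_{\mathrm{lin}}+\theta_{\mathrm{err}}$ and derive an equation for $\theta_{\mathrm{err}}$, namely a transport equation with forcing terms
	\[
	u[\theta_{\mathrm{lin}}]\cdot\nabla\theta_{\mathrm{lin}}, \qquad u[\theta_{\mathrm{err}}]\cdot\nabla(\theta_{\mathrm{rad}}+\theta_{\mathrm{lin}}),
	\qquad\text{etc.}
	\]
	We close an $L^2$ estimate for $\theta_{\mathrm{err}}$ and a high-norm $H^{3}$ estimate. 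The high-norm estimate gives global existence of the smooth solution on $[0,T]$ via the Beale--Kato--Majda-type continuation criterion in \cite{CMT}; this works because $\nabla u$ stays bounded by the smallness of the self-interaction. We then interpolate to show
	\[
	\Norm{\theta_{\mathrm{err}}(t)}_{H^\sigma}\ll \Norm{\theta_{\mathrm{lin}}(t)}_{H^\sigma}.
	\]
	Once $N$ is large, this yields \eqref{eq:block-inflation} for all $t\in[\tau,T]$.
\end{enumerate}

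The main obstacle is the forcing term $u[\theta_{\mathrm{lin}}]\cdot\nabla\theta_{\mathrm{lin}}$. For SQG the velocity has the same regularity as $\theta$, not one derivative better, so this quadratic term is a priori of size $N\cdot(\text{amplitude})^2$. The plan relies on a cancellation: for a nearly one-dimensional oscillation $\cos(N\xi(x))$, the velocity $\cR^\perp\theta_{\mathrm{lin}}$ is, to leading order, parallel to the level sets of $\xi$, hence orthogonal to $\nabla\theta_{\mathrm{lin}}$. The leading quadratic term therefore vanishes, leaving errors of order $\lambda/N$ relative to the naive bound. We will establish this by a symbol expansion of $\cR^\perp$ acting on modulated functions $a(x)e^{iN\xi(x)}$, which gives the leading term $i\,\nabla^\perp\xi/|\nabla\xi|\cdot a e^{iN\xi}$ plus a remainder bounded by $N^{-1}\Norm{a}_{C^2}$. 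If this cancellation is insufficient, the fallback is to choose $\lambda$ closer to $N$, at the cost of a worse exponent.
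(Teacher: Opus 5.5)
\textbf{Gap 1: the shear rate is too weak to produce any inflation.} You calibrate the background shear at order one (or order $\eps$), and with that choice the statement cannot be reached. By your own Step~1 formula,
$\Norm{\theta_{\mathrm{lin}}(t)}_{H^\sigma}\sim\Norm{\theta_{\mathrm{pert}}}_2N^\sigma(1+ct)^\sigma\simeq\Norm{\theta_{\mathrm{pert}}}_{H^s}N^{\sigma-s}(1+ct)^\sigma\lesssim\eps N^{\sigma-s}(1+cT)^\sigma$.
This tends to zero as $N\to\infty$ when $\sigma<s$. Exceeding $M$ would force $cT\gtrsim(M/\eps)^{1/\sigma}N^{(s-\sigma)/\sigma}\to\infty$. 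So in the range $\sigma_*(s)<\sigma<s$, which is the whole content of the theorem, a bounded shear rate acting for a fixed time gives no inflation. No choice of parameters in Step~2 can then produce a threshold below $s$.

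What supercriticality must buy is a \emph{divergent} shear rate at bounded $H^s$ cost. This is where $s<2$ is used, not $s>1$. The paper takes $\theta_{\rad,0}=c_0\lambda^{1-s}f(\lambda r)$. Its $H^s$ norm is $\simeq c_0$, but its angular velocity is $K\Omega(\lambda r)$ with $K=c_0\lambda^{2-s}\to\infty$. The oscillation $c_0\lambda^{1-s}N^{-s}g(\lambda r)\cos(N\varphi)$ sits on the same scale, so the wave number grows by the factor $Kt$ and $\Norm{\overline\theta_{\osc}(t)}_{H^\rho}\simeq c_0^{1+\rho}t^\rho\lambda^{\rho(3-s)-s}N^{\rho-s}$.

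Your identification of the threshold also rests on a false identity:
$1+\frac{(3-s)(s-1)}{1+2s-s^2}-\frac{s(3-s)}{1+2s-s^2}=\frac{(s-1)(2-s)}{1+2s-s^2}>0$.
At $s=3/2$ the two values are $10/7$ and $9/7$. So the Euler exponent does not transfer.

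\textbf{Gap 2: existence on $[0,T]$ once $K\to\infty$.} In this regime existence becomes the real difficulty, and your Step~3 does not handle it. Since $\Norm{\nabla u}_\infty\gtrsim K$, Gronwall-type estimates in physical coordinates lose a factor $e^{CKT}$. This applies both to an $H^3$ estimate fed into a Beale--Kato--Majda criterion and to a direct $C^{1,\alpha}$ estimate. The resulting Riccati bound survives only up to $T_{\mathrm{dir}}\sim K^{-1}\log N\to0$, and no polynomial smallness in $N$ compensates.

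The missing idea has two parts.
\begin{itemize}
\item Pull back by the exact background shear flow $\cS_t(R,\varphi)=(R,\varphi+Kt\Omega(R))$. This removes the transport $Ku[f]\cdot\nabla$ entirely.
\item Bound the conjugated velocity operator polynomially in $Kt$. Naive composition costs $\lambda(1+Kt)^{3+2\alpha}$ and gives a strictly larger threshold. The sharp bound $\lambda(1+Kt)^{1+2\alpha}$ on the inner annulus comes from commuting $u_\nu$ with $\Omega'^{-1}\partial_R$, using $\Omega'\geq c_f>0$ there; the $Kt$ terms then cancel by rotation covariance.
\end{itemize}
Set $N=\lambda^a$. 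The existence condition $TK(1+KT)^{1+2\alpha}N^{1+\alpha-s}\to0$ requires $a>2(2-s)/(s-1)$, while growth requires $a<\frac{\sigma(3-s)-s}{s-\sigma}$. Balancing these two is exactly what produces $\sigma_*(s)$.

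\textbf{Further points.} You would also need to control the back-reaction $Ku[\Theta_{\osc}]\cdot\nabla f$ on the background, which carries the large factor $K$. The paper controls it through the exactly preserved $N$-fold angular symmetry, which makes the radial velocity of the oscillation exponentially small off its support. Your Cartesian packet $\cos(Nx_1)g_\lambda$ gives up this structure.

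Conversely, the quadratic self-interaction you single out is not the obstacle. The crude bound $\Norm{u[\theta_{\osc}]}_2\Norm{\nabla\overline\theta_{\osc}}_\infty$ already gives a relative $L^2$ error $Kt(1+Kt)N^{1-s}\to0$ under the same parameters, with no cancellation needed.
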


Before stating the loss of regularity, we define the class
of classical solutions considered in this paper.

\begin{definition}[Classical solution]\label{def:classical}
	Let $T>0$. A function
	$\theta:\R^2\times[0,T]\to\R$ is called a classical solution of
	\eqref{eq:SQG} with initial datum $\theta_0$ if $\theta$ is continuous
	on $\R^2\times[0,T]$, its spatial and time derivatives exist at every point, \eqref{eq:SQG} holds pointwise with
	$\theta(\cdot,0)=\theta_0$, and
	\[
	\theta\in L^\infty([0,T];H^{1+\gamma}(\R^2))
	\]
	for some $\gamma>0$.
\end{definition}

\begin{theorem}[Instantaneous gap loss of regularity]
	\label{thm:main-loss}
	Let $1<s<2$. For any $T>0$ and $\eps>0$, there exists an initial
	datum $\theta_0\in H^s(\R^2)$ with $\Norm{\theta_0}_{H^s}\leq\eps$
	such that \eqref{eq:SQG} admits a classical solution $\theta$ on
	$[0,T]$ with $\theta(0)=\theta_0$. This solution satisfies
	\begin{equation}\label{eq:main-loss}
		\theta(t)\notin H^\sigma(\R^2)
		\quad\text{for every }t\in(0,T]
		\text{ and every }\sigma>\sigma_*(s).
	\end{equation}
	Moreover, there exists $\gamma>0$ satisfying
	$1+\gamma<\sigma_*(s)$ such that
	\[
\theta\in L^\infty([0,T];H^{1+\gamma}(\R^2)).
	\]
	The solution is unique in the class of \cref{def:classical} with initial datum $\theta_0$.
\end{theorem}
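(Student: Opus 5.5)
The plan is to superpose infinitely many well-separated, rescaled copies of the building blocks from \cref{thm:block-inflation}, following the gluing strategy of C\'ordoba, Mart\'inez-Zoroa and O\.{z}a\'nski for Euler \cite{EulerGap}. Fix a sequence $\tau_j\downarrow 0$ and $\sigma_j\downarrow\sigma_*(s)$. Apply \cref{thm:block-inflation} with $\eps_j=\eps 2^{-j}$, $M_j\to\infty$ (chosen after the previous blocks) and $\tau=\tau_j$, $\sigma=\sigma_j$. This gives compactly supported smooth data $\theta_{0,j}$ and solutions $\theta_j$ on $[0,T]$. I will also need quantitative information from the construction behind \cref{thm:block-inflation}, namely that each $\theta_j$ satisfies
\[
\sup_{t\in[0,T]}\|\theta_j(t)\|_{H^{1+\gamma}}\le 2^{-j}
\]
for a fixed $\gamma$ with $1+\gamma<\sigma_*(s)$. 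This should follow from interpolation: the block's $H^s$ norm is small while its growth is confined above the threshold $\sigma_*$. I also need bounds on $\|\theta_j\|_{C^{1,\alpha}}$ and on the support radius over $[0,T]$.

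Set $\theta_0=\sum_j\theta_{0,j}(\cdot-x_j)$ with centers $|x_j|\to\infty$ chosen inductively, so that $\|\theta_0\|_{H^s}\le\eps$. Because $|x_j|$ is as large as we like, the solution should be close to the sum of translated blocks; the blocks interact only through the far-field tail $|x|^{-2}$ of the SQG kernel \eqref{Biot}. To make this rigorous:
\begin{itemize}
\item Construct the solution $\theta=\lim_N\Theta_N$, where $\Theta_N$ is the smooth solution with the first $N$ blocks as data.
\item Show inductively that adding block $N+1$ far away perturbs the earlier blocks by an amount $\delta_N$ in $C^{1,\alpha}$ and in $H^{\sigma_j}$ on their supports, and perturbs block $N+1$ itself by a similarly small amount. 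This is a stability estimate for transport by a velocity that is smooth on each support, with the external field having size $\lesssim\operatorname{dist}^{-2}$ in $C^k$ there.
\item Choose $x_{N+1}$ so that $\sum\delta_N$ is tiny. Each block then keeps $\|\cdot\|_{H^{\sigma_j}}\ge M_j/2$ on $[\tau_j,T]$, is localized, and has uniform $H^{1+\gamma}$ bound.
\end{itemize}
The limit is a classical solution in $L^\infty H^{1+\gamma}$. Since the blocks are disjointly supported and localized, $\|\theta(t)\|_{H^\sigma}\gtrsim\|\theta_j(t)\|_{H^{\sigma}}$ up to tail errors. For any $t>0$ and $\sigma>\sigma_*$, taking $j$ large with $\tau_j<t$ and $\sigma_j<\sigma$ gives infinite norm, which proves \eqref{eq:main-loss}.

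For uniqueness, take two classical solutions $\theta,\tilde\theta$ in the class of \cref{def:classical}, both in $L^\infty H^{1+\gamma'}$. Do an $L^2$ estimate on $w=\theta-\tilde\theta$:
\[
\frac{d}{dt}\|w\|_{L^2}^2=-2\int (u[w]\cdot\nabla\tilde\theta)\,w ,
\]
which is not obviously controlled, since $\nabla\tilde\theta\notin L^\infty$ in general. The route I would try first exploits the structure of the constructed solution. Our $\theta$ is smooth away from the blow-up point at infinity, and on each block it is Lipschitz with bounds that grow in $j$. So I would write the estimate with the roles chosen so that $\nabla\theta$ (the constructed solution) appears, and then localize block by block. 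Alternatively, one can use the $H^{-1/2}$ energy structure of SQG, combined with the fact that $\theta$ is locally $C^{1,\alpha}$ and the interactions decay, to get a Gr\"onwall estimate.

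I expect uniqueness, together with the uniform control of cumulative interactions among infinitely many blocks (so the limit stays in $L^\infty H^{1+\gamma}$ and each block's inflation survives), to be the main obstacle. The interaction step needs careful bookkeeping of $C^{1,\alpha}$ norms that grow with $j$ against separation distances chosen afterwards.
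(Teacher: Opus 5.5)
Your architecture is the paper's: blocks from \cref{thm:block-inflation}, inductive far-apart gluing, and cutoff localization to read off the $H^{\sigma_j}$ blow-up. However, two load-bearing steps are not supported.

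\textbf{The uniform $H^{1+\gamma}$ bound.} You need $\sup_t\Norm{\vartheta_j(t)}_{H^{1+\gamma}}\le 2^{-j}$ with $\gamma$ independent of $j$, and it cannot come from ``the block's $H^s$ norm is small''. Only the \emph{initial} $H^s$ norm is small. Since $s>\sigma_*(s)$, \cref{thm:block-inflation} with $\sigma=s$ makes $\Norm{\vartheta_j(t)}_{H^s}$ arbitrarily large on $[\tau_j,T]$. Interpolating against the conserved $L^2$ norm therefore requires an upper bound on the \emph{exact} solution in some norm above $1+\gamma$ that grows no faster than the WKB rate. The tools you list do not give this:
\begin{itemize}
\item the $C^{1,\alpha}$ bootstrap controls $H^{1+\gamma}$ only for $\gamma<\alpha_j$, but it closes only for $\alpha_j=O(\delta_j)$ (cf.\ \eqref{eq:alpha-choice}), and $\delta_j\to0$ as $\sigma_j\downarrow\sigma_*(s)$;
\item an energy estimate in the original variables carries the factor $e^{CKT}$.
\end{itemize}
The paper fills this with a separate $H^2$ estimate for $\widetilde\Theta_{\osc}$ in the adapted coordinates. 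It uses the inner bound $\Norm{\cB_t[H]}_{H^m(\mathcal I)}\le C\lambda(1+Kt)\Norm{H}_{H^m}$ of \cref{lem:pullback-H2} to get $\Norm{\widetilde\Theta_{\osc}(t)}_{H^2}\lesssim c_0\lambda^{1-s}N^{2-s}$. Hence $\Norm{\theta_{\osc}(t)}_{H^q}\lesssim c_0\lambda^{q-s}(1+KT)^qN^{q-s}$, which tends to zero for $q=1+\gamma$ with $\gamma=\frac{s-1}{2(1+2s-s^2)}$. A separate $H^2$ bound handles the radial error.

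\textbf{Uniqueness.} Your proposal leaves this open. As you note, $\Norm{\nabla\vartheta_j}_\infty$ grows with $j$; the $j$-th radial shell alone contributes $\sim K_j$. So the $L^2$ identity for $\theta-\widetilde\theta$ cannot close globally, whichever solution carries the gradient. The $\dot H^{-1/2}$ route does not help: its Gronwall coefficient is $\Norm{\nabla u[\theta]}_\infty$, and the background shear of block $j$ already has size $\sim K_j$.

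``Localize block by block'' is where the proof lives. The missing idea is how to localize the competitor $\widetilde\theta$, about which you know only $L^\infty H^{1+\widetilde\gamma}$. The paper proceeds as follows.
\begin{itemize}
\item Since $u[\widetilde\theta]\in L^\infty([0,T];W^{1,2}\cap L^\infty)$ is divergence-free, it has a DiPerna--Lions/Crippa regular Lagrangian flow. Transporting each initial block along it gives $\widetilde\theta_j$; transporting along the flow of $u[\theta]$ gives $\theta_j$. The bounded speed keeps both near $z_j$.
\item The difference $W_j:=\theta_j-\widetilde\theta_j$ solves $\partial_tW_j+u[\widetilde\theta]\cdot\nabla W_j+u[W]\cdot\nabla\theta_j=0$. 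Only the smooth $\nabla\theta_j$, bounded by $M_j$, appears.
\item On $\supp\theta_j$, the part of $u[W]$ generated by the other components is at most $CD_j^{-1}\mathcal E$, where $\mathcal E:=\sum_i\Norm{W_i}_2$.
\item A Gronwall argument on each component separately gives $\Norm{W_j(t)}_2\lesssim M_je^{CM_jT}D_j^{-1}\int_0^t\mathcal E$ for large $j$.
\end{itemize}
This is summable only because the centers satisfy $D_j\ge 2^{j+12}M_je^{CM_jT}$, i.e.\ \eqref{eq:uniqueness-separation}. That condition must be built into the gluing, in addition to your stability requirement. Summing then gives $\mathcal E\le A_*\int_0^t\mathcal E$, hence $\mathcal E\equiv0$.

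The order matters. Each component is run through Gronwall with its own rate $M_j$ first, with the factor $e^{CM_jT}$ paid for by $D_j^{-1}$, and only then are the components summed. Differentiating any single norm of $\theta-\widetilde\theta$ produces coefficients $M_j$ that are unbounded in $j$.
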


The fixed Sobolev gap is given explicitly by
\begin{equation*}
	s-\sigma_*(s)
	=
	\frac{s(2-s)(s-1)}{1+2s-s^2}>0.
\end{equation*}
The existence of $\gamma>0$ with
$1+\gamma<\sigma_*(s)$ follows from
\begin{equation}\label{eq:sigma-above-one}
	\sigma_*(s)-1
	=
	\frac{s-1}{1+2s-s^2}>0.
\end{equation}

\begin{remark}
	In \cite{CMOcontinuous}, C\'ordoba, Mart\'inez-Zoroa and
	O\.{z}a\'nski established an instantaneous continuous loss of Sobolev regularity
	from initial data in $H^s(\R^2)$ for the inviscid SQG equation, with
	$\frac32<s<2$. This initial datum generates a solution on some interval
	$[0,T_0]$ whose Sobolev gap at time $t$ is
	\[
	s-\frac{s}{1+\bar c t}
	=\frac{s\bar c t}{1+\bar c t},
	\]
	where $\bar c>0$ is a small constant. By contrast,
	\cref{thm:main-loss} gives the fixed gap $s-\sigma_*(s)>0$. The two
	gaps coincide at
	\[
	t_{\mathrm{cross}}
	:=\frac{1}{\bar c}
	\left(\frac{s}{\sigma_*(s)}-1\right),
	\]
	and the fixed gap is larger for $0<t<t_{\mathrm{cross}}$. In addition,
	\cref{thm:main-loss} holds on $[0,T]$ for any $T>0$ and in the wider
	range $1<s<2$.
\end{remark}

\begin{remark}
	The study of instantaneous loss of regularity across a fixed
	Sobolev gap originated in \cite{EulerGap} for the two-dimensional
	Euler equation. The shear mechanism used in \cite{EulerGap} has also been
	applied to various problems in fluid dynamics
	\cite{CMOSQG,DissipativeSQG,GuoLuo,CMLMZgeneralized,
		CMZHolder,LuoSupercritical}.
	However, the strategy in \cite{EulerGap} does not extend directly
	to SQG \eqref{eq:SQG}. 
	Our construction exploits the shear effect induced by an outer radial shell in its interior region and uses coordinates adapted to the background shear to obtain polynomial bounds that allow the solution to be continued up to any fixed time $T>0$.
\end{remark}

\subsection{Ideas of the proof}\label{subsec:proof-ideas}

We first highlight two new ingredients that are key to our construction. 

\begin{enumerate}[label=\textup{(\roman*)}]
	\item
	For the two-dimensional Euler equation, a radial vorticity shell
	induces no velocity in its interior. For SQG, a radial profile
	can generate angular shear on either side of its support.
	We use an outer radial component to generate a nondegenerate
	shear on the inner oscillatory annulus; see
	\cref{fig:nonlocal-radial-shear} and
	Appendix~\ref{app:radial-kernel}.
	
	\item 
	The global existence of classical solutions to the SQG equation \eqref{eq:SQG} remains open, even for smooth initial data. 
	To obtain loss of regularity across a fixed Sobolev gap at each $t\in(0,T]$ for any $T>0$, we need to choose initial data whose corresponding solution exists on $[0,T]$.
	We use the following key observation: in coordinates defined by the exact background shear flow, the background transport term vanishes. A commutator estimate involving the velocity operator and a weighted radial derivative yields bounds for the conjugated operator that depend polynomially on the shear strength. 
	These bounds allow us to continue the solution beyond $T$; see Step~3 below.
\end{enumerate}

We sketch the proof in four steps. For any $0<\tau<T$,
Steps~1--3 yield a smooth block solution on $[0,T]$ with
uniform Sobolev growth on $[\tau,T]$. In Step~4 we place
infinitely many such blocks sufficiently far apart and glue
them to obtain loss of regularity at each $t\in(0,T]$.

\smallskip
\noindent{\bf Step 1. Construction of the approximate solution.}
In polar coordinates $(r,\varphi)$, choose smooth radial profiles
$f\in C_c^\infty((4,5))$ and $g\in C_c^\infty((1,2))$, with
$f\geq0$ and $f,g\not\equiv0$. Let \(c_0>0\) be small, and let \(\lambda\) and the integer \(N\) be large parameters to be fixed later. We take the initial datum to be
\begin{align}\label{idea-initial-datum}
	\theta_0(r,\varphi)
	:=
	\underbrace{
		c_0\lambda^{1-s}f(\lambda r)
	}_{\theta_{\rad,0}(r)}
	+
	\underbrace{
		c_0\lambda^{1-s}N^{-s}g(\lambda r)\cos(N\varphi)
	}_{\theta_{\osc,0}(r,\varphi)}.
\end{align}
On the fixed scale $y=\lambda x$, set $R=|y|$ and
$\mathcal I:=\{y\in\R^2:1/2<R<3\}$. 
\begin{figure}[htbp]
	\centering
	\resizebox{\textwidth}{!}{%
		\begin{tikzpicture}[
			x=1cm,
			y=1cm,
			>=Latex,
			line cap=round,
			line join=round,
			every node/.style={font=\small},
			paneltitle/.style={font=\bfseries\small, text=black!78},
			mathlabel/.style={font=\small, fill=white, fill opacity=.88,
				text opacity=1, inner sep=1.6pt, rounded corners=1.5pt}
			]
			
			\definecolor{ProbeBlue}{RGB}{43,102,166}
			\definecolor{ProbeRed}{RGB}{194,67,54}
			\definecolor{RadialOrange}{RGB}{224,137,48}
			\definecolor{ShearTeal}{RGB}{22,132,132}
			\definecolor{InkGray}{RGB}{73,79,87}
			
			\begin{scope}[shift={(2.45,2.20)}]
				\node[paneltitle, anchor=south] at (0,3.72)
				{(a) Physical-space supports};
				
				\def\amin{-58}
				\def\amax{58}
				\def\rscale{0.72}
				
				\path[fill=RadialOrange!30]
				(\amin:{4*\rscale}) arc[start angle=\amin,end angle=\amax,
				radius={4*\rscale}]
				-- (\amax:{5*\rscale})
				arc[start angle=\amax,end angle=\amin,radius={5*\rscale}] -- cycle;
				
				\foreach \j in {0,...,11}{
					\pgfmathsetmacro{\aone}{\amin+\j*(\amax-\amin)/12}
					\pgfmathsetmacro{\atwo}{\amin+(\j+1)*(\amax-\amin)/12}
					\ifodd\j
					\def\probeclr{ProbeBlue!62}
					\else
					\def\probeclr{ProbeRed!62}
					\fi
					\path[fill=\probeclr]
					(\aone:{1.08*\rscale})
					arc[start angle=\aone,end angle=\atwo,radius={1.08*\rscale}]
					-- (\atwo:{1.92*\rscale})
					arc[start angle=\atwo,end angle=\aone,radius={1.92*\rscale}] -- cycle;
				}
				
				\foreach \rho in {1,2,4,5}{
					\draw[InkGray!66, line width=.55pt]
					(\amin:{\rho*\rscale})
					arc[start angle=\amin,end angle=\amax,radius={\rho*\rscale}];
				}
				\foreach \aa in {\amin,\amax}{
					\draw[InkGray!58, line width=.55pt]
					(\aa:{1*\rscale}) -- (\aa:{2*\rscale});
					\draw[InkGray!58, line width=.55pt]
					(\aa:{4*\rscale}) -- (\aa:{5*\rscale});
				}
				
				\fill[InkGray] (0,0) circle (1.1pt);
				\draw[InkGray!45, densely dotted]
				(0,0) -- (0:{5.24*\rscale});
				\node[font=\scriptsize, text=InkGray, below left=1pt] at (0,0) {$0$};
				
				\foreach \rho/\lab in
				{1/{\lambda^{-1}},2/{2\lambda^{-1}},4/{4\lambda^{-1}},5/{5\lambda^{-1}}}{
					\draw[InkGray!65]
					({\rho*\rscale},-.045) -- ({\rho*\rscale},.045);
					\node[font=\scriptsize, text=InkGray, below=2pt]
					at ({\rho*\rscale},0) {$\lab$};
				}
				
				\draw[-{Latex[length=2.2mm]}, ShearTeal, line width=1.05pt]
				(-37:{1.28*\rscale})
				arc[start angle=-37,end angle=-8,radius={1.28*\rscale}];
				\draw[-{Latex[length=2.2mm]}, ShearTeal, line width=1.4pt]
				(-40:{1.73*\rscale})
				arc[start angle=-40,end angle=16,radius={1.73*\rscale}];
				

				\node[font=\scriptsize, text=InkGray!80, rotate=-22]
				at (-31:{2.98*\rscale})
				{empty gap};
				
				\draw[-{Latex[length=2mm]}, RadialOrange!78!black, densely dashed,
				line width=.8pt]
				(23:{4.36*\rscale})
				to[bend left=18] (30:{1.72*\rscale});
				\node[mathlabel, text=RadialOrange!65!black, rotate=-18]
				at (31:{3.15*\rscale}) {nonlocal action};
				
			\end{scope}
			
			\draw[-{Latex[length=2.5mm]}, InkGray!65, line width=.85pt]
			(6.30,2.30) -- (7.42,2.30)
			node[midway, above=2pt, font=\scriptsize, text=InkGray] {unfold};
			
			\begin{scope}[shift={(7.78,.10)}]
				\node[paneltitle, anchor=south] at (2.92,5.88)
				{(b) Unfolded phase level curves};
				
				\path[fill=black!1] (0,0) rectangle (5.82,5.18);
				
				\begin{scope}
					\clip (0,0) rectangle (5.82,5.18);
					
					\foreach \yy in {.38,1.06,1.74,2.42,3.10,3.78,4.46,5.14}{
						\draw[InkGray!46, densely dashed, line width=.55pt]
						(0,\yy) -- (5.82,\yy);
					}
					
					\foreach \j in {-2,-1,0,1,2,3,4,5,6}{
						\pgfmathsetmacro{\base}{0.18+0.68*\j}
						\ifodd\j
						\def\lineclr{ProbeBlue}
						\else
						\def\lineclr{ProbeRed}
						\fi
						\draw[\lineclr!88, line width=1.0pt]
						plot[domain=0:5.82, samples=70, smooth]
						(\x,{\base+0.20*\x+0.035*\x*\x});
					}
				\end{scope}
				
				\draw[InkGray!82, line width=.7pt] (0,0) rectangle (5.82,5.18);
				
				\draw[-{Latex[length=2mm]}, InkGray, line width=.75pt]
				(0,-.02) -- (6.12,-.02);
				\draw[-{Latex[length=2mm]}, InkGray, line width=.75pt]
				(-.02,0) -- (-.02,5.47)
				node[left=3pt, anchor=south east] {$\widetilde\varphi$};
				\node[font=\scriptsize, below=3pt] at (0,0) {$1$};
				\node[font=\scriptsize, below=3pt] at (5.82,0) {$2$};
				\node[font=\small, below=11pt] at (2.91,0) {$R=\lambda r$};
				
				\draw[InkGray!50, densely dashed, line width=.55pt]
				(.32,5.46) -- (1.10,5.46);
				\node[font=\scriptsize, anchor=west, text=InkGray]
				at (1.18,5.46) {$t=0$};
				\draw[ProbeBlue, line width=1.0pt]
				(3.88,5.46) -- (4.66,5.46);
				\node[font=\scriptsize, anchor=west, text=InkGray]
				at (4.74,5.46) {$t>0$};
				
				\draw[-{Latex[length=2.3mm]}, ShearTeal, line width=1.05pt]
				(3.60,1.06) -- (4.78,1.06);
				\node[mathlabel, text=ShearTeal!65!black, anchor=south]
				at (4.18,1.18) {radial direction};
				
			\end{scope}
			
		\end{tikzpicture}
	}
	\caption{Supports and phase level curves. The outer radial profile acts
		nonlocally across the empty gap and generates a nondegenerate angular
		shear on the inner oscillatory annulus. After unfolding this annulus,
		the level curves of the phase $N\widetilde\varphi-NKt\Omega(R)$ are
		given by
		$\widetilde\varphi=Kt\Omega(R)+c/N$, where $c\in\R$. They are
		horizontal at $t=0$, while for $t>0$ they are strictly increasing
		and strictly convex for $1<R<2$, in particular on $\supp g$, because
		$\Omega'(R)>0$ and $\Omega''(R)>0$; see
		\cref{lem:appendix-SQG-shear}.}
	\label{fig:nonlocal-radial-shear}
\end{figure}

Since $f$ is radial, the velocity $u[f]$ is purely angular, and we write
$$
u[f](r,\varphi)
=
u_\varphi[f](r)\evarphi
=
r\Omega(r)\evarphi,
\quad
\Omega(r):=\frac{u_\varphi[f](r)}{r},
$$
where $\evarphi=(-\sin\varphi,\cos\varphi)$. 
We take the approximate solution $\overline\theta$ to solve
\begin{align}\label{idea-approximate-eqn}
	\begin{cases}
		\partial_t\overline\theta
		+u[\theta_{\rad,0}]\cdot\nabla\overline\theta=0,\\[2pt]
		\overline\theta(\cdot,0)=\theta_0.
	\end{cases}
\end{align}
Set $K:=c_0\lambda^{2-s}$. The solution of
\eqref{idea-approximate-eqn} is given explicitly by
\begin{equation*}
	\begin{aligned}
		\overline\theta(r,\varphi,t)
		=
		\underbrace{
			c_0\lambda^{1-s}f(\lambda r)
		}_{\overline\theta_{\rad}(r)}
		~	+~
		\underbrace{
			c_0\lambda^{1-s}N^{-s}g(\lambda r)
			\cos\bigl(
			N\varphi-NKt\,\Omega(\lambda r)
			\bigr)
		}_{\overline\theta_{\osc}(r,\varphi,t)}.
	\end{aligned}
\end{equation*}

\smallskip
\noindent{\bf Step 2. Growth of the approximate solution.} The stationary component satisfies
$\Norm{\overline\theta_{\rad}(t)}_{H^\rho}\simeq
c_0\lambda^{\rho-s}$ for $0\leq\rho\leq2$. 
The outer radial profile generates a nondegenerate angular shear on
$\mathcal I$: $\inf_{1/2\leq R\leq3}\Omega'(R)\geq c_f>0$, in
particular on $\supp g$. For $0<\tau<T$ and
sufficiently large $\lambda$, the WKB estimate and the orthogonality of the
radial and oscillatory components give
\begin{equation}\label{eq:idea-approximate-growth}
	\Norm{\overline\theta(t)}_{H^\rho}
	\simeq_{f,g,\rho}
	c_0\lambda^{\rho-s}
	+c_0^{1+\rho}t^\rho
	\lambda^{\rho(3-s)-s}N^{\rho-s},
	\qquad t\in[\tau,T].
\end{equation}
Fix $\sigma_*(s)<\rho<s$. By \eqref{eq:idea-approximate-growth}, the oscillatory term can be made arbitrarily large uniformly on $[\tau,T]$ provided that
\[
N^{s-\rho}=o\!\left(\lambda^{\rho(3-s)-s}\right)
\text{~as~} \lambda\to \infty.
\]
For $N\sim\lambda^a$, the above asymptotic requirement implies to $a<A_s(\rho)$, where
\[
A_s(\rho):=\frac{\rho(3-s)-s}{s-\rho},
\qquad
A_s\bigl(\sigma_*(s)\bigr)=A_s \left(\frac{s(3-s)}{1+2s-s^2}\right)=\frac{2(2-s)}{s-1}.
\]
Since $A_s$ is strictly increasing, we may choose $\delta>0$ sufficiently small that $a_\delta<A_s(\rho)$, and set
\begin{equation}\label{eq:idea-parameter-choice}
	N=\left\lfloor\lambda^{a_\delta}\right\rfloor,
	\qquad
	a_\delta:=\frac{2(2-s)+\delta}{s-1}.
\end{equation}
Then, for any $M>0$, the approximate solution satisfies $\Norm{\overline\theta(t)}_{H^\rho}\ge M$ throughout $[\tau,T]$ when $\lambda$ is sufficiently large.

 \smallskip
 \noindent{\bf Step 3. Adapted coordinates and continuation.}
 
 \medskip
 \noindent\emph{The required existence time.}
The approximate solution is smooth for all $t\geq0$. Let $\theta$ be the exact solution with initial datum in \eqref{idea-initial-datum}, defined on its maximal  existence interval $[0,T_{\max})$. To transfer the bound $\Norm{\overline\theta(t)}_{H^\rho}\ge M$ on $[\tau,T]$ to $\theta$, we first prove $T_{\max}>T$. Choose
$0<\alpha<\frac{\delta(s-1)}{1+2s-s^2+\delta}$.
By the $C^{1,\alpha}$ continuation criterion, it suffices to show
\begin{equation}\label{eq:idea-continuation-criterion}
	\sup_{0\leq t<\min\{T,T_{\max}\}}
	\Norm{\theta(t)}_{C^{1,\alpha}}<\infty.
\end{equation}

We decompose $\theta$ into $\theta_{\rad}$ and
$\theta_{\osc}$, which are transported by $u[\theta]$
from the respective initial data $\theta_{\rad,0}$ and
$\theta_{\osc,0}$ given in \eqref{idea-initial-datum}.
In the rescaled variable $y=\lambda x$, set
$\Theta_\ell(y,t):=\theta_\ell(y/\lambda,t)$ for $\ell\in\{\rad,\osc\}$ and $R:=|y|$. Recall that $K=c_0\lambda^{2-s}$, and write
$\Theta_{\rad,\err}:=\Theta_{\rad}-c_0\lambda^{1-s}f$.
Unless stated otherwise, all derivatives and norms below are in $y$. The radial error is easier to control, so we focus on the oscillatory component:
\begin{equation}\label{eq:idea-exact-osc}
	\partial_t\Theta_{\osc}
	+\bigl(Ku[f]+\lambda u[\Theta_{\osc}+\Theta_{\rad,\err}]\bigr)
	\cdot\nabla\Theta_{\osc}=0.
\end{equation}
A direct $C^{1,\alpha}$ estimate of \eqref{eq:idea-exact-osc} gives
\begin{equation}\label{eq:idea-direct-holder}
		\frac{\dd}{\dd t}\Norm{\Theta_{\osc}(t)}_{C^{1,\alpha}}
		\leq C_\alpha
		\bigl(K+\lambda\Norm{\Theta_{\osc}(t)}_{C^{1,\alpha}}
		+\lambda\Norm{\Theta_{\rad,\err}(t)}_{C^{1,\alpha}}\bigr)
		\Norm{\Theta_{\osc}(t)}_{C^{1,\alpha}}.
\end{equation}
The initial oscillatory component satisfies
\begin{equation}\label{eq-initial-osc}
\Norm{\Theta_{\osc}(0)}_{C^{1,\alpha}}
\lesssim \frac K\lambda N^{1+\alpha-s}.
\end{equation}
Equation \eqref{eq:idea-direct-holder} is a Riccati inequality up to the radial error term.
The upper bound obtained from this inequality does not
develop a singularity for $0\leq t<T_{\mathrm{dir}}$, where
 \begin{equation*}
 	T_{\mathrm{dir}}\simeq\frac{\log N}{K}\simeq \lambda^{s-2}\log \lambda=o(1)
		\qquad\text{as }\lambda\to\infty.
 \end{equation*}
 Thus $T_{\mathrm{dir}}<\tau$ for sufficiently large $\lambda$, so the direct estimate does not cover $[\tau,T]$.

\medskip
\noindent\emph{The exact background shear.} 
The equation \eqref{eq:idea-exact-osc} contains the background transport
\(Ku[f]\cdot\nabla\Theta_{\osc}\).
Since \(f\) is radial, \(Ku[f]\) is stationary and purely angular: it preserves $R$ and rotates each circle at angular speed $K\Omega(R)$. This observation allows us to absorb the large linear transport into the coordinates. 
Let \(\cS_t\) denote the flow generated by the \emph{background shear} \(Ku[f]\), and let \(\cS_t^{-1}\) denote its inverse:
\[
\cS_t(R,\varphi)=\bigl(R,\varphi+Kt\Omega(R)\bigr),
\qquad
\cS_t^{-1}(R,\varphi)=\bigl(R,\varphi-Kt\Omega(R)\bigr).
\]
Both maps preserve Lebesgue measure. We use the coordinates defined by $\cS_t$ and set
$\widetilde\Theta_{\osc}:=\Theta_{\osc}\circ\cS_t$ and
$\widetilde\Theta_{\rad,\err}:=\Theta_{\rad,\err}\circ\cS_t$.
For $H$ supported in a fixed annulus, define
\begin{equation*}
	\cB_t[H](y):=
	(D\cS_t(y))^{-1}
	\lambda u[H\circ\cS_t^{-1}](\cS_t(y)),
\end{equation*}
where $D\cS_t$ denotes the Jacobian matrix in $y$. 
Let
$V:=\cB_t[\widetilde\Theta_{\osc}]
+\cB_t[\widetilde\Theta_{\rad,\err}]$.
Then the oscillatory component after transformation satisfies
\begin{equation*}
	\partial_t\widetilde\Theta_{\osc}
	+V\cdot\nabla\widetilde\Theta_{\osc}=0.
\end{equation*}

On the annulus $\mathcal I$, we have
$\inf_{\overline{\mathcal I}}\Omega'>0$.
For $\nu\in\{r,\varphi\}$, let $u_\nu[F]$ denote the polar component of $u[F]$ in \eqref{Biot}.
Rotation covariance gives
$\partial_\varphi u_\nu[F]=u_\nu[\partial_\varphi F]$. Let $H\in C_c^{1,\alpha}(\R^2)$
satisfy $\supp H\subset\mathcal I$.
Letting the weighted radial derivative
$\Omega'^{-1}\partial_R$ act on $u_\nu[H\circ\cS_t^{-1}]\circ\cS_t$, we obtain
\begin{equation}\label{eq:idea-radial-commutator}
	\frac{1}{\Omega'}\partial_R
	\bigl(u_\nu[H\circ\cS_t^{-1}]\circ\cS_t\bigr)
	=
	\Bigl(
	u_\nu\bigl[
	(\Omega'^{-1}\partial_RH)\circ\cS_t^{-1}
	\bigr]
	+
	[\Omega'^{-1}\partial_R,u_\nu]
	(H\circ\cS_t^{-1})
	\Bigr)\circ\cS_t.
\end{equation}
The polar velocity operator $u_\nu$ and the commutator
$[\Omega'^{-1}\partial_R,u_\nu]$ are bounded on
$C^\alpha(\mathcal I)$.
Together with \eqref{eq:idea-radial-commutator},
we obtain
\begin{equation}\label{eq:step3-polynomial-operator}
	\Norm{\cB_t[H]}_{C^{1,\alpha}(\mathcal I)}
	\le C_{f,\alpha}\lambda(1+Kt)^{1+2\alpha}
	\Norm H_{C^{1,\alpha}(\mathcal I)}.
\end{equation}

The initial $C^{1,\alpha}$ bound in
\eqref{eq-initial-osc} contains $N^{1+\alpha-s}$, which offsets the polynomial dependence on
$Kt$ in \eqref{eq:step3-polynomial-operator}.
With the parameter in \eqref{eq:idea-parameter-choice},
a bootstrap argument gives
\begin{equation}\label{eq:idea-effective-smallness}
	\begin{aligned}
		\int_0^t
		\Norm{\cB_\zeta[\widetilde\Theta_{\osc}(\zeta)]}
		_{C^{1,\alpha}(\mathcal I)}\,\dd\zeta
		&\lesssim TK(1+KT)^{1+2\alpha}N^{1+\alpha-s}\\
		&\lesssim
		\lambda^{-\frac{\delta(s-1)^2}{1+2s-s^2+\delta}}
		\longrightarrow0
		\qquad\text{as }\lambda\to\infty,
	\end{aligned}
\end{equation}
 uniformly for
$t<\min\{T,T_{\max}\}$.
The radial error contributes only an exponentially small term to
$V$ on $\mathcal I$. Together with
\eqref{eq:idea-effective-smallness}, the transport estimate closes
the bootstrap and yields
\begin{equation*}
	\sup_{0\leq t<\min\{T,T_{\max}\}}
	\Norm{\widetilde\Theta_{\osc}(t)}_{C^{1,\alpha}}
	\lesssim \frac K\lambda N^{1+\alpha-s}.
\end{equation*}

Returning to the original variable $x$ leads to
\begin{equation*}
	\begin{aligned}
		\sup_{0\leq t<\min\{T,T_{\max}\}}
		\Norm{\theta_{\osc}(t)}_{C_x^{1,\alpha}}
		&\leq C_{f,g,\alpha}K\lambda^\alpha(1+KT)^{1+\alpha}
		N^{1+\alpha-s}\\
		&\lesssim_{f,g,\alpha,T,c_0}
		\lambda^{-\delta+
			\frac{1+2s-s^2+\delta}{s-1}\alpha}
		\longrightarrow 0
		\qquad\text{as }\lambda\to\infty.
	\end{aligned}
\end{equation*}
Together with the radial bound, this estimate verifies
\eqref{eq:idea-continuation-criterion} and so
$T_{\max}>T$ for sufficiently large $\lambda$.

\smallskip
\noindent{\bf Step 4. Outward gluing.}
The $L^2$ difference satisfies
$\Norm{\theta(t)-\overline\theta(t)}_2
=o(\Norm{\theta_{\osc,0}}_2)$ uniformly on $[0,T]$.
For every fixed $\sigma>\sigma_*(s)$, we combine this estimate with
\eqref{eq:idea-approximate-growth} and Sobolev interpolation
to obtain $\Norm{\theta(t)}_{H^\sigma}\geq M$ for all
$t\in[\tau,T]$ when $\lambda$ is sufficiently large.

To obtain loss of regularity at every positive time, choose
\[
\sigma_j
:=\sigma_*(s)+\frac{s-\sigma_*(s)}{j+1},
~\tau_j:=\frac{T}{j+1},
~j\geq1.
\]
For each $j$, let $\vartheta_j$ denote the exact solution
$\theta$ constructed in Step~3 with
$\sigma=\sigma_j$, $\tau=\tau_j$ and $M=2^{2j}$.
Thus
$\Norm{\vartheta_j(t)}_{H^{\sigma_j}}\geq2^{2j}$
throughout $[\tau_j,T]$.
Choose the centers $\{z_j\}_{j\geq1}$ sufficiently far apart
so that, for each $j$, the velocity
$\sum_{k\ne j}u[\vartheta_k(\cdot-z_k,t)]$ is small on the
support of $\vartheta_j(\cdot-z_j,t)$.
For each $J\geq1$, let $\theta^{(J)}$ be the solution with
initial datum
$\theta_0^{(J)}(x):=\sum_{j=1}^J\vartheta_j(x-z_j,0)$.
The block parameters and centers are chosen so that
$\theta^{(J)}$ converges to a classical solution $\theta$
with initial datum
$\theta_0=\sum_{j\geq1}\vartheta_j(\cdot-z_j,0)$.

Now fix $t>0$ and $\sigma>\sigma_*(s)$. For all sufficiently large
$j$, we have $\tau_j\leq t$ and $\sigma_j<\sigma$. The lower bound for
the $j$th block, 
together with the separation between distinct blocks, yields $
2^j\leq C_\sigma\Norm{\theta(t)}_{H^\sigma}$, and letting $j\to\infty$ yields
$\theta(t)\notin H^\sigma$.

\FloatBarrier
\subsection{Notation}
For $x\in\R^2$, we write
\[
x=r(\cos\varphi,\sin\varphi),
\quad
r=|x|\geq0,
\quad
\varphi\in\T.
\]
For $r>0$, the associated orthonormal polar frame is
\[
\er=(\cos\varphi,\sin\varphi),
\quad
\evarphi=(-\sin\varphi,\cos\varphi).
\]
We identify $\T$ with $\R/(2\pi\mathbb Z)$. Unless otherwise specified, all norms are taken over $\R^2$.
We omit the domain from the notation and abbreviate $L^p$ norms
as follows:
\[
\Norm{h}_X:=\Norm{h}_{X(\R^2)},
\quad
\Norm{h}_p:=\Norm{h}_{L^p(\R^2)},
\quad 1\leq p\leq\infty,
\]
where $X$ denotes a function space used below, such as
$L^p$, $C^{k,\alpha}$, $H^\rho$ or $\dot H^\rho$.

For nonnegative quantities $X$ and $Y$, $X\lesssim Y$ means
$X\leq CY$ for some constant $C>0$, and $X\simeq Y$ means both
$X\lesssim Y$ and $Y\lesssim X$. Subscripts indicate dependence of
the implicit constant; for example, the constant in
$X\lesssim_{f,g,s}Y$ may depend on $f$, $g$ and $s$. Unless stated
otherwise, these constants are independent of the parameters
$\lambda$ and $N$ introduced below.

\subsection{Organization}

\Cref{sec:prelim} introduces the relevant function spaces and collects the preliminary lemmas.
In \cref{sec:pseudo}, we construct the approximate solution and
establish its Sobolev growth.
In \cref{sec:norm-inflation}, we prove \cref{thm:block-inflation}
by establishing continuation in adapted coordinates.
\Cref{sec:gluing} proves \cref{thm:main-loss} by a gluing argument and, in particular, establishes the uniqueness.
The radial shell formulas for Euler and SQG are proved in
Appendix~\ref{app:radial-kernel}.

\section{Preliminaries}\label{sec:prelim}

We collect several basic definitions, scaling identities, and velocity estimates used to separate the radial background from the angular oscillations.

\subsection{Basic properties}

For $0<\alpha<1$, the H\"older seminorm on $E\subset\R^2$ is
defined by
\begin{equation*}
	[h]_{C^\alpha(E)}
	:=
	\sup_{\substack{x,y\in E\\x\neq y}}
	\frac{|h(x)-h(y)|}{|x-y|^\alpha}.
\end{equation*}
For a multi-index $\nu=(\nu_1,\nu_2)$, write
$|\nu|=\nu_1+\nu_2$ and
$\partial^\nu=\partial_1^{\nu_1}\partial_2^{\nu_2}$,
where $\partial_i=\partial/\partial x_i$.
For every integer $j\geq0$, set
\begin{equation*}
	\begin{aligned}
		\Norm{\nabla^j h}_{L^\infty(E)}
		:=
		\sum_{|\nu|=j}\Norm{\partial^\nu h}_{L^\infty(E)},\quad
		[\nabla^j h]_{C^\alpha(E)}:=
		\sum_{|\nu|=j}[\partial^\nu h]_{C^\alpha(E)}.
	\end{aligned}
\end{equation*}
For vector fields, these sums also run over the components.
In particular, we take
\[
\Norm{h}_{C^{1,\alpha}(E)}
:=
\Norm{h}_{L^\infty(E)}
+
\sum_{i=1}^2
\left(
\Norm{\partial_i h}_{L^\infty(E)}
+
[\partial_i h]_{C^\alpha(E)}
\right).
\]

For $\rho\in\R$, write
\[
\Norm{h}_{H^\rho}
:=
\Norm{(1-\Delta)^{\rho/2}h}_2,
\quad
\Norm{h}_{\dot H^\rho}
:=
\Norm{\Lambda^\rho h}_2.
\]
For $0<\beta<1$, the Gagliardo characterization gives
\begin{equation*}
	\Norm{h}_{\dot H^\beta}^2
	\simeq_\beta
	\int_{\R^2}\int_{\R^2}
	\frac{|h(x+z)-h(x)|^2}{|z|^{2+2\beta}}
	\,\dd z\,\dd x.
\end{equation*}
More generally, for $\rho=m+\beta$, where $m\geq0$ is an integer
and $0<\beta<1$, the Gagliardo characterization gives
\begin{equation*}
	\Norm{h}_{H^\rho}^2
	\simeq_\rho
	\sum_{|\nu|\leq m}\Norm{\partial^\nu h}_2^2
	+
	\sum_{|\nu|=m}
\Norm{\partial^\nu h}_{\dot H^\beta}^2.
\end{equation*}

Under the rescaling $h_\lambda(x):=h(\lambda x)$, homogeneous Sobolev norms in two
dimensions satisfy
\begin{equation}\label{eq:Sobolev-scaling}
	\Norm{h_\lambda}_{\dot H^\rho}
	=
	\lambda^{\rho-1}\Norm{h}_{\dot H^\rho}, \text{~for~} \rho\in\R.
\end{equation}
Since the map
$
h\longmapsto u[h]=\nabla^\perp\Lambda^{-1}h
$
is homogeneous of order zero, we have
\begin{equation}\label{eq:Riesz-scaling}
	u[h_\lambda](x)=u[h](\lambda x).
\end{equation}

\begin{lemma}\label{lem:real-complex-angular}
	Let $N\in\N$, $N\geq1$, $\rho\in\R$,
	$a\in C_c^\infty((0,\infty);\R)$ and
	$\psi\in C^\infty((0,\infty);\R)$. Then
	\begin{equation}\label{eq:real-complex-angular}
		\Norm{a(r)\cos\!\left(N\varphi+\psi(r)\right)}_{H^\rho(\R^2)}
		=
		2^{-\frac12}
		\Norm{a(r)e^{i(N\varphi+\psi(r))}}_{H^\rho(\R^2)}.
	\end{equation}
\end{lemma}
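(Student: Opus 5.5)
Write $h:=a(r)e^{i(N\varphi+\psi(r))}$. Then $a(r)\cos(N\varphi+\psi(r))=\operatorname{Re}h=\tfrac12(h+\overline h)$, with $\overline h=a(r)e^{-i(N\varphi+\psi(r))}$. The plan is to show that $h$ and $\overline h$ are orthogonal in $H^\rho$ and have equal $H^\rho$ norms. Granting this,
\[
\Norm{\operatorname{Re}h}_{H^\rho}^2=\tfrac14\bigl(\Norm{h}_{H^\rho}^2+\Norm{\overline h}_{H^\rho}^2\bigr)=\tfrac12\Norm{h}_{H^\rho}^2,
\]
which is \eqref{eq:real-complex-angular}. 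Since $a$ is smooth with compact support in $(0,\infty)$ and $\psi$ is smooth there, $h\in C_c^\infty(\R^2\setminus\{0\})$, so all $H^\rho$ norms are finite and Plancherel applies for every real $\rho$.

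For the equal norms, note that $(1-\Delta)^{\rho/2}$ has the real, even symbol $\langle\xi\rangle^\rho$, so it commutes with complex conjugation. Hence $\Norm{\overline h}_{H^\rho}=\Norm{\overline{(1-\Delta)^{\rho/2}h}}_2=\Norm{h}_{H^\rho}$.

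For orthogonality, use the rotation operators $(\mathcal R_\beta F)(x):=F(R_\beta^{-1}x)$, where $R_\beta$ is the rotation by angle $\beta$. The symbol $\langle\xi\rangle^\rho$ is radial, so $(1-\Delta)^{\rho/2}$ commutes with each $\mathcal R_\beta$. By construction $\mathcal R_\beta h=e^{-iN\beta}h$ and $\mathcal R_\beta\overline h=e^{iN\beta}\overline h$. Therefore $(1-\Delta)^{\rho/2}h$ and $(1-\Delta)^{\rho/2}\overline h$ lie in the angular-frequency eigenspaces $N$ and $-N$ of the unitary rotation group on $L^2$.

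To conclude, define $I:=\langle (1-\Delta)^{\rho/2}h,(1-\Delta)^{\rho/2}\overline h\rangle_{L^2}$. Since each $\mathcal R_\beta$ is unitary and commutes with $(1-\Delta)^{\rho/2}$, we have $I=\langle\mathcal R_\beta(1-\Delta)^{\rho/2}h,\mathcal R_\beta(1-\Delta)^{\rho/2}\overline h\rangle_{L^2}$. Evaluating the right side with the eigenvalue relations gives $I=e^{-2iN\beta}I$ for all $\beta$. Because $N\geq1$, choosing $\beta=\pi/(2N)$ gives $I=-I$, so $I=0$.

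The main point to check is the commutation of $(1-\Delta)^{\rho/2}$ with rotations for negative or fractional $\rho$. This follows on the Fourier side: $\widehat{\mathcal R_\beta F}(\xi)=\widehat F(R_\beta^{-1}\xi)$ and $\langle R_\beta^{-1}\xi\rangle^\rho=\langle\xi\rangle^\rho$. Equivalently, one can verify orthogonality directly in Fourier space. There $\widehat h$ and $\widehat{\overline h}$ carry angular factors $e^{\pm iN\phi_\xi}$ times radial functions, so the pairing $\int\langle\xi\rangle^{2\rho}\widehat h\,\overline{\widehat{\overline h}}\,\dd\xi$ contains $\int_0^{2\pi}e^{2iN\phi}\dd\phi=0$.
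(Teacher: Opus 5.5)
Your proof is correct and follows the same route as the paper. You write the cosine as $\tfrac12(h+\overline h)$, get orthogonality in $H^\rho$ from the rotation invariance of $(1-\Delta)^{\rho/2}$ (modes $N$ and $-N$), and get equality of norms from its commutation with complex conjugation; your explicit $I=e^{-2iN\beta}I$ argument just spells out the paper's ``preserves angular Fourier modes'' step.
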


\begin{proof}
	Let
	\[
	F(r,\varphi):=a(r)e^{i(N\varphi+\psi(r))}.
	\]
	Then
	$a(r)\cos(N\varphi+\psi(r))=(F+\overline F)/2$.
	Since $(1-\Delta)^{\rho/2}$ commutes with rotations, it preserves
	angular Fourier modes. The functions $F$ and $\overline F$ belong to
	the distinct modes $N$ and $-N$, so they are orthogonal in
	$H^\rho$. Complex conjugation also preserves the $H^\rho$ norm.
	Consequently,
	\[
	\Norm{\frac{F+\overline F}{2}}_{H^\rho}^2
	=
	\frac14\left(
	\Norm{F}_{H^\rho}^2+\Norm{\overline F}_{H^\rho}^2
	\right)
	=
	\frac12\Norm{F}_{H^\rho}^2,
	\]
which gives \eqref{eq:real-complex-angular}.
\end{proof}

For $r>0$, define the radial and angular velocity components by
\begin{equation*}
	u_r[h]:=u[h]\cdot\er,
	\quad
	u_\varphi[h]:=u[h]\cdot\evarphi.
\end{equation*}

\begin{lemma}\label{lem:radial}
	Let $h\in C_c^\infty(\R^2)$ be radial. Then
	$\Lambda^{-1}h$ is radial and, for $r>0$,
	\[
	u[h](r,\varphi)
	=
	\partial_r\!\left(\Lambda^{-1}h\right)(r)\evarphi.
	\]
	In particular,
	\[
	u_r[h]\equiv0,
	\quad
	u_\varphi[h](r)
	=
	\partial_r\!\left(\Lambda^{-1}h\right)(r).
	\]
\end{lemma}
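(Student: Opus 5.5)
The proof will rest on the rotation invariance of $\Lambda^{-1}$ and on the polar form of the perpendicular gradient.

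\emph{Radiality of $\Lambda^{-1}h$.} For $h\in C_c^\infty(\R^2)$ we use the representation
\[
\Lambda^{-1}h(x)=c\int_{\R^2}\frac{h(y)}{|x-y|}\dd y,
\]
which holds in two dimensions because $|\xi|^{-1}$ is locally integrable. The integral converges absolutely, and its Fourier transform is $|\xi|^{-1}\widehat h$. This function is smooth on $\R^2$, since $h$ is smooth with compact support and the kernel is locally integrable. Let $Q$ be any rotation. The change of variables $y=Qy'$ gives $\Lambda^{-1}h(Qx)=\Lambda^{-1}(h\circ Q)(x)$. Since $h\circ Q=h$, we get $\Lambda^{-1}h(Qx)=\Lambda^{-1}h(x)$, so $\Lambda^{-1}h$ is radial. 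We then write $\Lambda^{-1}h(x)=\Phi(|x|)$ with $\Phi$ smooth on $(0,\infty)$.

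\emph{Computing the velocity.} By \eqref{eq:SQG}, $u[h]=\nabla^\perp\Lambda^{-1}h$. For $r>0$ the chain rule gives
\[
\nabla\Phi(|x|)=\Phi'(r)\,x/r=\Phi'(r)\er .
\]
Applying the rotation $v\mapsto v^\perp=(-v_2,v_1)$, we have $\er^\perp=(-\sin\varphi,\cos\varphi)=\evarphi$. Hence $u[h]=\Phi'(r)\evarphi=\partial_r(\Lambda^{-1}h)(r)\evarphi$.

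Taking the inner product of this identity with $\er$ gives $u_r[h]=0$, and taking it with $\evarphi$ gives $u_\varphi[h]=\partial_r(\Lambda^{-1}h)(r)$.

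\emph{Main obstacle.} The only point needing care is justifying that $\nabla^\perp$ commutes with $\Lambda^{-1}$ on $C_c^\infty$ data. This matches the Fourier definition of $\cR^\perp$, and it also requires enough smoothness and decay of $\Lambda^{-1}h$ to differentiate pointwise. Both facts follow from the integral representation, because differentiation can be placed on $h$ inside the convolution.
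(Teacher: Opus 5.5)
Your proof is correct and follows essentially the paper's argument. The only difference is how you get radiality of $\Lambda^{-1}h$: you use the Riesz-potential kernel with a rotational change of variables, while the paper cites the radial symbol $|\xi|^{-1}$. The rest is the same chain-rule identity $\nabla^\perp\psi(|x|)=\psi'(r)\evarphi$ for $r>0$.
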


\begin{proof}
	Set $\psi_h:=\Lambda^{-1}h$. Since the Fourier symbol $|\xi|^{-1}$
	is radial, $\Lambda^{-1}$ preserves radial symmetry. Hence $\psi_h$
	is radial. For $r>0$,
	\[
	u[h]
	=
	\nabla^\perp\psi_h
	=
	\partial_r\psi_h(r)\nabla^\perp r
	=
	\partial_r\psi_h(r)\evarphi.
	\]
	It follows that $u_r[h]=u[h]\cdot\er=0$.
\end{proof}

\begin{lemma}[Preservation of angular periodicity]
	\label{lem:Nfold}
	Let $T>0$ and $N\in\N$, $N\geq2$. Let $\theta$ be the unique smooth
	solution of \eqref{eq:SQG} on $[0,T]$ with initial datum $\theta_0$.
	If
	\[
	\theta_0\left(r,\varphi+\frac{2\pi}{N}\right)
	=
	\theta_0(r,\varphi),
	\]
	then
	\[
	\theta\left(r,\varphi+\frac{2\pi}{N},t\right)
	=
	\theta(r,\varphi,t),
	\quad 0\leq t\leq T.
	\]
	Moreover, let $h$ be the unique smooth solution of
	\[
	\partial_t h+u[\theta]\cdot\nabla h=0,
	\quad
	h(\cdot,0)=h_0.
	\]
	If $h_0$ is periodic in $\varphi$ with period $2\pi/N$, then so is $h(t)$ for
	every $t\in[0,T]$.
\end{lemma}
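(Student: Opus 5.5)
The plan is to prove both statements with the uniqueness of smooth solutions and the rotation covariance of the SQG system. Let $\cR_\beta$ denote the rotation by angle $\beta=2\pi/N$, acting by $(\cR_\beta F)(x):=F(\cR_{-\beta}x)$; in polar coordinates this is $F(r,\varphi-\beta)$. The first step is the covariance of the velocity:
\[
u[F\circ\cR_{-\beta}](x)=\cR_\beta\,u[F](\cR_{-\beta}x).
\]
This follows from \eqref{Biot} by the change of variables $y=\cR_\beta y'$. Since $|\cR_\beta z|=|z|$ and $(\cR_\beta z)^\perp=\cR_\beta z^\perp$ (rotations commute with the rotation by $\pi/2$), the kernel satisfies $K(\cR_\beta z)=\cR_\beta K(z)$, and the principal value is preserved because rotations map symmetric balls to themselves. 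Alternatively, on the Fourier side the symbol of $\nabla^\perp\Lambda^{-1}$ is $i\xi^\perp/|\xi|$, which is rotation equivariant.

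Next, set $\widetilde\theta(x,t):=\theta(\cR_{-\beta}x,t)$. Then $\nabla\widetilde\theta(x)=\cR_\beta(\nabla\theta)(\cR_{-\beta}x)$. Combining this with the covariance identity gives
\[
u[\widetilde\theta]\cdot\nabla\widetilde\theta(x)=\bigl(\cR_\beta u[\theta]\bigr)\cdot\bigl(\cR_\beta\nabla\theta\bigr)(\cR_{-\beta}x)=\bigl(u[\theta]\cdot\nabla\theta\bigr)(\cR_{-\beta}x),
\]
since rotations preserve the inner product. Hence $\widetilde\theta$ is again a smooth solution of \eqref{eq:SQG} on $[0,T]$, and it has the same regularity class as $\theta$, because rotations are isometries of every $H^r$ and $C^{k,\alpha}$ space. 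Its initial datum is $\theta_0(r,\varphi-2\pi/N)=\theta_0(r,\varphi)$ by hypothesis. By uniqueness of smooth solutions (the standing assumption in the statement, coming from the local theory of \cite{CMT}), $\widetilde\theta=\theta$, which is the first claim after replacing $\varphi$ by $\varphi+2\pi/N$.

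For the passive scalar $h$, set $\widetilde h(x,t):=h(\cR_{-\beta}x,t)$. The same computation gives $\partial_t\widetilde h+u[\widetilde\theta]\cdot\nabla\widetilde h=0$. We have just shown $\widetilde\theta=\theta$, so $\widetilde h$ solves the same linear transport equation as $h$, with the same initial datum $h_0$.

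Uniqueness for this linear equation comes from characteristics: $u[\theta]$ is smooth and Lipschitz on $[0,T]$, so its flow $X_t$ exists, and any smooth solution satisfies $h(X_t(a),t)=h_0(a)$. Therefore $\widetilde h=h$. The only step requiring care is the covariance identity with the principal-value kernel, and it is routine.
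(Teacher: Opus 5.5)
Your proposal is correct and follows the paper's proof. As there, you rotate the solution, use rotation covariance of $u$ and uniqueness of smooth SQG solutions to get invariance of $\theta$, and then apply the same covariance-plus-uniqueness argument to the linear transport equation for $h$. Your explicit check of the kernel covariance and your characteristics-based uniqueness for $h$ only fill in details that the paper summarizes as ``rotational invariance'' and ``uniqueness''.
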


\begin{proof}
	Define
	\[
	\theta^\sharp(r,\varphi,t)
	:=
	\theta\!\left(r,\varphi+\frac{2\pi}{N},t\right).
	\]
	By rotational invariance, $\theta^\sharp$ solves \eqref{eq:SQG} with
	initial datum
	$\theta_0(r,\varphi+2\pi/N)=\theta_0(r,\varphi)$. Uniqueness therefore
	gives $\theta^\sharp=\theta$ on $[0,T]$.

Since $\theta^\sharp=\theta$, the polar components of
$u[\theta]$ are $2\pi/N$-periodic in angle. Rotational invariance then shows that $h(r,\varphi+2\pi/N,t)$ solves the transport equation with initial datum $h_0$. Uniqueness gives the stated periodicity of $h$.
\end{proof}

\subsection{Basic velocity estimates}

We denote the kernel in \eqref{Biot} by
\[
\cK(z):=c\frac{z^\perp}{|z|^3}.
\]
The first lemma gives a logarithmic $L^\infty$ bound for the Riesz transform $u[q]$; the second quantifies the decay of its nonlocal effect away
from the support.

\begin{lemma}
	\label{lem:log-CZ}
	Let $0<a<b$ and $R>0$. Let
	$q=q(x)\in C_c^1(\R^2)$ be nonzero and satisfy
	\[
	\supp q
	\subset
	\bigl\{x\in\R^2:aR<|x|<bR\bigr\}.
	\]
	Then
	\begin{equation}\label{eq:log-CZ}
		\Norm{u[q]}_\infty
		\leq
		C\Norm{q}_\infty
		\left[
		1+\log\left(
		2+\frac{R\Norm{\nabla q}_\infty}{\Norm{q}_\infty}
		\right)
		\right],
	\end{equation}
	where $C$ depends only on $a$ and $b$.
\end{lemma}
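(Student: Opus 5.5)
By scaling we reduce to the unit case. Set $q_R(x):=q(Rx)$. Then $\supp q_R\subset\{a<|x|<b\}$, and by \eqref{eq:Riesz-scaling} we have $u[q](x)=u[q_R](x/R)$. Moreover
\[
\Norm{q_R}_\infty=\Norm{q}_\infty,
\qquad
\Norm{\nabla q_R}_\infty=R\Norm{\nabla q}_\infty .
\]
So both sides of \eqref{eq:log-CZ} are invariant, and it suffices to treat $R=1$. Write $A:=\Norm{q}_\infty$, $L:=\Norm{\nabla q}_\infty$, and fix $x\in\R^2$.

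We split the principal value integral $u[q](x)=\pv\int\cK(x-y)q(y)\dd y$ at two radii $0<\eps\le 1\le D$, using $|\cK(z)|\lesssim|z|^{-2}$.

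\emph{Near region $|x-y|<\eps$.} Since $\cK$ is odd, its integral over any annulus $\{\eta<|z|<\eps\}$ vanishes. Hence this piece equals
\[
\int_{|z|<\eps}\cK(z)\bigl(q(x-z)-q(x)\bigr)\dd z,
\]
which is bounded by $CL\int_{|z|<\eps}|z|^{-1}\dd z=C'L\eps$.

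\emph{Intermediate region $\eps\le|x-y|\le D$.} Bounding $|q|\le A$, this piece is at most $CA\int_{\eps}^{D}\rho^{-1}\dd\rho=CA\log(D/\eps)$.

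\emph{Far region $|x-y|>D$.} Using $|q|\le A$ and $|\supp q|\le\pi b^2$, this piece is at most $CAb^2D^{-2}$.

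The compact support is what removes any dependence on large scales: we take $D=1$, so the intermediate and far pieces cost only $CA\log(1/\eps)+CAb^2$. (For $|x|\ge 2b$ one could even bound $|u[q](x)|\lesssim Ab^2/|x|^2$ directly, but the uniform split already handles this case.)

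It remains to choose $\eps:=\min\{1,A/L\}$. The near piece is then at most $CA$, and the intermediate piece at most $CA\log(\max\{1,L/A\})\le CA\log(2+L/A)$. Summing the three contributions gives
\[
|u[q](x)|\le C(a,b)\,A\,\bigl[1+\log(2+L/A)\bigr],
\]
which is \eqref{eq:log-CZ} after undoing the scaling. Here $A>0$ because $q\neq0$.

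The only delicate point is the cancellation in the near region, which is needed to make sense of the principal value. It is routine once one notes that $\cK$ is odd and homogeneous of degree $-2$, so that $\int_{\eta<|z|<\eps}\cK=0$. The constant depends only on $b$ (through the support area) and on $c$; the dependence on $a$ is harmless.
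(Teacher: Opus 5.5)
Your proof is correct and follows essentially the same route as the paper: rescale to $R=1$, then split the principal value at radii $\min\{1,\Norm{q}_\infty/\Norm{\nabla q}_\infty\}$ and $1$. In the three regions you use the oddness of $\cK$ for the near-field cancellation, the bound $\Norm{q}_\infty\log(1/\varepsilon)$ in the middle, and the compact support of $q$ (the paper writes this as $\Norm{q}_1$) for the far part. The only difference is cosmetic: the paper caps the inner radius at $1/2$ instead of $1$.
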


\begin{proof}
Define $\widetilde q(y):=q(Ry)$. Then $\widetilde q$ is supported in
$\{a<|y|<b\}$ and, by \eqref{eq:Riesz-scaling},
\[
\Norm{\widetilde q}_\infty=\Norm{q}_\infty,
\quad
\Norm{\nabla\widetilde q}_\infty
=R\Norm{\nabla q}_\infty,
\quad
\Norm{u[\widetilde q]}_\infty=\Norm{u[q]}_\infty.
\]
These scaling relations reduce the proof to the case $R=1$; below we
relabel $\widetilde q$ as $q$. Set
	\[
	d:=
	\min\left\{
	\frac12,
	\frac{\Norm{q}_\infty}{\Norm{\nabla q}_\infty}
	\right\}.
	\]
Split the integral around the singularity at $z=0$ into the regions
$|z|<d$, $d\leq|z|\leq1$, and $|z|>1$.
Since $\cK$ is odd,
\[
\pv\int_{|z|<d}\cK(z)\,\dd z=0,
\]
so we may replace $q(x-z)$ by $q(x-z)-q(x)$ in the first region.
Thus,
	\begin{align*}
		u[q](x)
		={}&
		\pv\int_{|z|<d}
		\cK(z)\bigl(q(x-z)-q(x)\bigr)\,\dd z\\
		&+
		\int_{d\leq|z|\leq1}\cK(z)q(x-z)\,\dd z
		+
		\int_{|z|>1}\cK(z)q(x-z)\,\dd z.
	\end{align*}
	Since $|\cK(z)|\leq C|z|^{-2}$, the three terms on the right are
	bounded respectively by
	\[
	C\Norm{\nabla q}_\infty d,
	\quad
	C\Norm{q}_\infty\log\frac1d,
	\quad
	C\Norm{q}_1.
	\]
	The choice of $d$, together with the fixed support size at unit scale,
	therefore gives
	\[
	\Norm{u[q]}_\infty
	\leq
	C\Norm{q}_\infty
	\left(
	1+\log\frac1d
	\right).
	\]
	Finally,
	\[
	d^{-1}
	\leq
	2+\frac{\Norm{\nabla q}_\infty}{\Norm{q}_\infty},
	\]
	which proves \eqref{eq:log-CZ}.
\end{proof}

\begin{lemma}\label{lem:far-field}
	Let $E\subset\R^2$ and suppose that, for some $D>0$,
	\[
	\operatorname{dist}(E,\supp f)\geq D.
	\]
	If $f\in L^1(\R^2)$, then, for every integer $j\geq0$,
	\begin{equation}\label{eq:far-field-L1}
		\Norm{\nabla^j u[f]}_{L^\infty(E)}
		\leq
		CD^{-2-j}\Norm{f}_{L^1(\R^2)}.
	\end{equation}
	If $f\in L^2(\R^2)$ and $|E|<\infty$, then
	\begin{equation}\label{eq:far-field-L2}
		\Norm{u[f]}_{L^2(E)}
		\leq
		C|E|^{\frac12}D^{-1}\Norm{f}_{L^2(\R^2)}.
	\end{equation}
\end{lemma}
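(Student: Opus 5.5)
Both bounds follow by putting the kernel estimate $|\nabla^j\cK(z)|\le C_j|z|^{-2-j}$ into the representation \eqref{Biot}. The key point is that the principal value plays no role here. For $x\in E$ and $y\in\supp f$ we have $|x-y|\ge D$, so
\[
u[f](x)=\int_{\R^2}\cK(x-y)f(y)\,\dd y
\]
is an absolutely convergent integral, because $\cK$ is smooth and bounded on $\{|z|\ge D\}$. We may then differentiate under the integral sign at interior points of the region where $\operatorname{dist}(x,\supp f)>D/2$. For points of $E$ on the boundary of that region, we apply the same argument with $D$ replaced by $D/2$, which only changes constants; equivalently, we work on the open set $\{\operatorname{dist}(\cdot,\supp f)>D/2\}\supset E$. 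To justify that this formula really equals $\cR^\perp f$ for a general $L^1$ or $L^2$ function $f$ away from its support, we use the standard fact that the Calderón–Zygmund operator coincides with integration against its kernel off the support. Alternatively, we approximate $f$ by $C_c^\infty$ functions supported in a slight enlargement of $\supp f$.

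For \eqref{eq:far-field-L1}, we write
\[
\nabla^j u[f](x)=\int(\nabla^j\cK)(x-y)f(y)\,\dd y,
\]
which gives
\[
|\nabla^ju[f](x)|\le C_j\sup_{|z|\ge D}|z|^{-2-j}\Norm f_1=C_jD^{-2-j}\Norm f_1,
\]
and we take the supremum over $x\in E$.

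For \eqref{eq:far-field-L2}, we bound pointwise using Cauchy–Schwarz:
\[
|u[f](x)|\le C\Bigl(\int_{|x-y|\ge D}|x-y|^{-4}\,\dd y\Bigr)^{1/2}\Norm f_2=C\bigl(\pi D^{-2}\bigr)^{1/2}\Norm f_2 .
\]
Squaring and integrating over $E$ then gives
\[
\Norm{u[f]}_{L^2(E)}\le C|E|^{1/2}D^{-1}\Norm f_2 .
\]

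The only step needing care is the justification of the kernel representation off the support; the estimates themselves are immediate once that is in place.
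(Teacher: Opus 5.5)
Your proof is correct and follows the same route as the paper: the pointwise bound $|\nabla^j\cK(z)|\le C|z|^{-2-j}$ integrated against $|f|$ gives the $L^1$ estimate, and Cauchy--Schwarz with $\int_{|z|\ge D}|z|^{-4}\,\dd z=\pi D^{-2}$ followed by integration over $E$ gives the $L^2$ estimate. Your extra remarks justify two points the paper leaves implicit: that the kernel representation holds off $\supp f$, and that one may differentiate under the integral on the open set $\{\operatorname{dist}(\cdot,\supp f)>D/2\}$.
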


\begin{proof}
	For $x\in E$ and $y\in\supp f$, we have $|x-y|\geq D$. Since
	\[
	|\nabla^j\cK(z)|
	\leq C|z|^{-2-j},
	\]
	it follows that
	\[
	|\nabla^j u[f](x)|
	\leq
	\int_{\R^2}
	|\nabla^j\cK(x-y)|\,|f(y)|\,\dd y
	\leq
	CD^{-2-j}\Norm{f}_{L^1(\R^2)}.
	\]
	Taking the supremum over $x\in E$ proves
	\eqref{eq:far-field-L1}.
	
	If $f\in L^2(\R^2)$, the Cauchy--Schwarz inequality gives, for every
	$x\in E$,
	\[
	|u[f](x)|
	\leq
	C\Norm{f}_{L^2(\R^2)}
	\left(
	\int_{|z|\geq D}|z|^{-4}\,\dd z
	\right)^{\frac12}
	\leq
	CD^{-1}\Norm{f}_{L^2(\R^2)}.
	\]
	Taking the $L^2$ norm over $E$ proves
	\eqref{eq:far-field-L2}.
\end{proof}

\subsection{Exponential decay of the radial velocity}
For an $N$-fold angularly periodic function $q$, the low angular modes of the radial component of $u[q]$ vanish. The next lemma shows that this radial velocity is exponentially small in $N$ on annuli separated from $\supp q$.
\begin{lemma}[Exponential decay on separated annuli]
	\label{lem:radial-velocity-decay}
	Let $0<a<b$, $N\in\N_+$, and $R>0$. Let
	$E\subset\R^2\setminus\{0\}$ be a bounded open annulus such that
	\[
	d_*:=
	\inf_{\substack{x\in E\\ a\leq\rho\leq b}}
	\left|\log\frac{|x|}{\rho}\right|>0,
	\]
	and set $E_R:=\{Rx:x\in E\}$. Suppose $q\in L^\infty(\R^2)$ is
	$2\pi/N$-periodic in the angular variable and supported in
	$\bigl\{x\in\R^2:aR<|x|<bR\bigr\}$.
	Then, for every integer $j\geq0$ and every $0<\alpha<1$,
	\begin{equation}\label{eq:radial-velocity-decay}
		\Norm{\nabla^j u_r[q]}_{L^\infty(E_R)}
		+
		R^\alpha
		[\nabla^j u_r[q]]_{C^\alpha(E_R)}
		\leq
		C_{j}R^{-j}(1+N)^{j+2}e^{-\frac{d_*N}{3} }
		\Norm{q}_\infty.
	\end{equation}
	Here $C_j>0$ depends only on $a$, $b$, $E$ and $j$,
	and is independent of $N$ and $R$.
\end{lemma}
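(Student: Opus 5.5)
We first reduce to $R=1$: with $\widetilde q(y):=q(Ry)$, \eqref{eq:Riesz-scaling} gives $u[\widetilde q](y)=u[q](Ry)$, so $\nabla^j$ produces the factor $R^{-j}$ and the Hölder seminorm the factor $R^{-\alpha}$. This accounts exactly for the powers of $R$ in \eqref{eq:radial-velocity-decay}. From now on $R=1$, $\supp q\subset\{a<|x|<b\}$, and $x\in E$ satisfies $|\log(|x|/\rho)|\ge d_*$ for every $\rho\in[a,b]$.

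Next we expand the kernel in angular modes. Write $x=r e_r(\varphi)$ and $y=\rho e_r(\phi)$. For $y\in\supp q$ we have $|x-y|>0$, so the Biot--Savart integral \eqref{Biot} is absolutely convergent and
\[
u_r[q](x)=c\int \frac{(x-y)^\perp\cdot e_r(\varphi)}{|x-y|^3}q(y)\dd y
= c\int\frac{-\rho\sin(\phi-\varphi)}{|x-y|^3}\,q(\rho,\phi)\,\rho\dd\rho\dd\phi,
\]
using $x^\perp\cdot e_r(\varphi)=0$. The kernel
\[
k(r,\rho,\psi):=\frac{\rho\sin\psi}{(r^2+\rho^2-2r\rho\cos\psi)^{3/2}},\qquad \psi=\phi-\varphi,
\]
is a function of $\psi$ only, for fixed $r$ and $\rho$. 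Because $q$ is $2\pi/N$-periodic in $\phi$, its angular Fourier series contains only the modes $mN$. Hence $u_r[q](r,\varphi)$ involves only the kernel coefficients $\hat k(r,\rho,mN)$ with $m\ne0$. The $m=0$ term vanishes because $k$ is odd in $\psi$, which is also why $u_r$ of a radial function is zero, consistent with \cref{lem:radial}.

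The key step is the exponential decay of $\hat k(r,\rho,n)$ in $|n|$. With $\mu:=\min(r,\rho)/\max(r,\rho)$ we have $\mu\le e^{-d_*}<1$, and
\[
r^2+\rho^2-2r\rho\cos\psi=\max(r,\rho)^2\,(1-\mu e^{i\psi})(1-\mu e^{-i\psi}).
\]
So $k$ extends holomorphically in $\psi$ to the strip $|\operatorname{Im}\psi|<\log(1/\mu)$, which contains $|\operatorname{Im}\psi|\le 2d_*/3$ with a margin. Shifting the contour to $\operatorname{Im}\psi=\pm 2d_*/3$ gives
\[
|\hat k(r,\rho,n)|\le C e^{-2d_*|n|/3},
\]
uniformly for $r\in\overline E$ and $\rho\in[a,b]$. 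The constant depends on $(1-\mu e^{2d_*/3})^{-3}\le(1-e^{-d_*/3})^{-3}$ and on the bounds for $r$ and $\rho$. Derivatives of $k$ in $r$ (and hence, via the chain rule, in $x$) preserve this holomorphy on the same strip. To pass from $u_r$ to $\nabla^j u_r$ we write $\nabla$ in polar form, $\partial_r$ and $r^{-1}\partial_\varphi$. Each $\partial_\varphi$ acting on mode $mN$ contributes a factor $|m|N$. Summing over $m\ne0$ with $|\hat q_{mN}(\rho)|\le\|q\|_\infty$ gives
\[
\sum_{m\ne0}(1+|m|N)^{j}e^{-2d_*|m|N/3}\lesssim (1+N)^{j}e^{-2d_*N/3}\Bigl(1+\tfrac{1}{N d_*}\Bigr),
\]
which is better than the claimed bound; the stated exponent $d_*N/3$ and the power $(1+N)^{j+2}$ leave room for the Hölder part.

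For the $C^\alpha$ seminorm we interpolate. On the open annulus $E$ we use the bound $[F]_{C^\alpha}\lesssim \|F\|_\infty+\|\nabla F\|_\infty$ applied to $F=\nabla^j u_r[q]$, together with the $L^\infty$ bound at order $j+1$; the annulus is bounded, so the constants depend on $E$. If $E$ is not convex, we join nearby points by short arcs inside the annulus, and pairs of points far apart are handled by the sup bound. This yields a factor $(1+N)^{j+1}e^{-2d_*N/3}$, which is dominated by $(1+N)^{j+2}e^{-d_*N/3}$.

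I expect the main obstacle to be the uniformity of the contour shift: keeping the bounds on the shifted strip uniform in $r\in\overline E$ and $\rho\in[a,b]$ while tracking the $r$-derivatives. We would handle this by writing $k$ through the generating function $(1-\mu e^{i\psi})^{-3/2}(1-\mu e^{-i\psi})^{-3/2}$, whose derivatives in $\mu$ remain bounded whenever $\mu e^{2d_*/3}\le e^{-d_*/3}$.
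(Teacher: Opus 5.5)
Your proposal is correct and follows the paper's proof essentially step by step: rescaling to $R=1$, shifting the contour in the complexified angular variable to get exponential decay of the kernel's angular Fourier coefficients (the paper shifts to height $d_*/2$, you to $2d_*/3$), keeping only the modes in $N\mathbb Z\setminus\{0\}$, differentiating in polar form, and interpolating for the H\"older part. The one cosmetic difference is how periodicity enters: the paper averages the kernel over the $N$ rotations, whereas you pair the kernel's absolutely and uniformly convergent Fourier series with the angular coefficients $\hat q_{mN}(\rho)$, and both are equally rigorous for $q\in L^\infty$.
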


\begin{proof}
We first prove the estimate when $R=1$. Let
$I_E:=\{|x|:x\in\overline E\}$. Substituting
$x=r(\cos\varphi,\sin\varphi)$ and
$y=\rho(\cos\psi,\sin\psi)$ into \eqref{Biot} and taking the radial
component, we obtain
	\begin{equation}\label{eq:radial-angular-kernel}
		u_r[q](r,\varphi)
		=
		\int_a^b\int_{-\pi}^{\pi}
		\mathscr K(r,\rho,\varphi-\psi)q(\rho,\psi)
		\,\dd\psi\dd\rho,
	\end{equation}
	where
	\[
	\mathscr K(r,\rho,\beta)
	=
	-c
	\frac{\rho^2\sin\beta}
	{\left(r^2+\rho^2-2r\rho\cos\beta\right)^{3/2}}.
	\]
	
To estimate the angular Fourier coefficients of $\mathscr K$, we
extend the angular variable to the complex plane. For $z\in\mathbb C$,
\[
r^2+\rho^2-2r\rho\cos z
=
2r\rho\left(
\cosh\left(\log\frac r\rho\right)-\cos z
\right).
\]
Hence
\[
r^2+\rho^2-2r\rho\cos z=0
\quad\Longleftrightarrow\quad
z=2\pi n\pm i\left|\log\frac r\rho\right|,
\quad n\in\mathbb Z.
\]
By the definition of $d_*$, the denominator
$r^2+\rho^2-2r\rho\cos z$ does not vanish when
$|\operatorname{Im}z|<d_*$. The strip is simply connected, so we
choose a single-valued holomorphic branch of its $\frac32$-power
that is positive on the real axis. Thus $z\mapsto\mathscr K(r,\rho,z)$
is holomorphic in this strip and $2\pi$-periodic.
	
	Compactness and periodicity imply that, for every integer
	$\ell\geq0$,
	\[
	\sup_{\substack{r\in I_E,\ \rho\in[a,b]\\
			|\operatorname{Im}z|\leq d_*/2}}
	\left|\partial_r^\ell\mathscr K(r,\rho,z)\right|
	\leq C_\ell.
	\]
	Define the angular Fourier coefficients of the kernel by
	\[
	\widehat{\mathscr K}_m(r,\rho)
	:=
	\frac{1}{2\pi}
	\int_{-\pi}^{\pi}
	\mathscr K(r,\rho,\beta)e^{-im\beta}\,\dd\beta.
	\]
	For $m\neq0$, Cauchy's theorem and periodicity allow us to shift the
	path of integration from $[-\pi,\pi]$ to
	\[
	z=\beta-i\operatorname{sgn}(m)\frac{d_*}{2},
	\quad -\pi\leq\beta\leq\pi.
	\]
	Along this path,
	\[
	\left|e^{-imz}\right|=e^{-d_*|m|/2}.
	\]
The contour shift therefore gives
	\[
	\left|
	\partial_r^{\ell_1}\widehat{\mathscr K}_m(r,\rho)
	\right|
	\leq
	C_{\ell_1}e^{-d_*|m|/2},
	\quad m\neq0.
	\]
	It follows that, for all nonnegative integers $\ell_1$ and $\ell_2$,
	\begin{equation}\label{eq:kernel-Fourier-decay}
		\left|
		(im)^{\ell_2}
		\partial_r^{\ell_1}
		\widehat{\mathscr K}_m(r,\rho)
		\right|
		\leq
		C_{\ell_1}
		(1+|m|)^{\ell_2}e^{-d_*|m|/2}.
	\end{equation}
	
	Since $q$ is only assumed to belong to $L^\infty$, we use its angular
	periodicity without relying on pointwise convergence of its Fourier
	series. To this end, define the averaged kernel
	\[
	\mathscr K_N(r,\rho,\beta)
	:=
	\frac1N\sum_{h=0}^{N-1}
	\mathscr K\!\left(r,\rho,\beta+\frac{2\pi h}{N}\right).
	\]
	For each $h$, shifting the angular integration variable in
	\eqref{eq:radial-angular-kernel} by $2\pi h/N$ leaves $q$ unchanged.
	Averaging these identities gives
	\[
	u_r[q](r,\varphi)
	=
	\int_a^b\int_{-\pi}^{\pi}
	\mathscr K_N(r,\rho,\varphi-\psi)q(\rho,\psi)
	\,\dd\psi\dd\rho.
	\]
	The finite sum satisfies
	\[
	\frac1N\sum_{h=0}^{N-1}e^{2\pi imh/N}
	=
	\begin{cases}
		1, & m\in N\mathbb Z,\\
		0, & m\notin N\mathbb Z.
	\end{cases}
	\]
Thus the absolutely and uniformly convergent Fourier expansion of the kernel yields
	\[
	\mathscr K_N(r,\rho,\beta)
	=
	\sum_{k\in\mathbb Z\setminus\{0\}}
	\widehat{\mathscr K}_{kN}(r,\rho)e^{ikN\beta},
	\]
	where the zero mode vanishes because
	$\mathscr K(r,\rho,\cdot)$ is odd. 
	
	On $E$, Cartesian derivatives are expressed in polar coordinates as
	\begin{equation}\label{eq:Cartesian-polar-derivatives}
		\partial_{x_1}
		=
		\cos\varphi\,\partial_r
		-\frac{\sin\varphi}{r}\,\partial_\varphi,
		\quad
		\partial_{x_2}
		=
		\sin\varphi\,\partial_r
		+\frac{\cos\varphi}{r}\,\partial_\varphi.
	\end{equation}
Since $r$ is bounded away from zero on $E$,
\eqref{eq:Cartesian-polar-derivatives} and
\eqref{eq:kernel-Fourier-decay} give absolute and uniform convergence
after differentiation through order $j$. We therefore differentiate
under the integral to obtain
	\begin{align*}
		\Norm{\nabla^j u_r[q]}_{L^\infty(E)}
		&\leq
		C_j\Norm{q}_\infty
		\sum_{k\neq0}
		(1+|k|N)^j e^{-d_*|k|N/2}\\
		&\leq
		C_j(1+N)^{j+1}e^{-\frac{d_*N}{3} }\Norm{q}_\infty.
	\end{align*}

	Interpolation on the fixed annulus $E$, together with the same
	estimate at order $j+1$, gives
	\[
	\begin{aligned}
		[\nabla^j u_r[q]]_{C^\alpha(E)}
		&\leq
		C_{j}
		\Norm{\nabla^j u_r[q]}_{L^\infty(E)}^{1-\alpha}
		\Norm{\nabla^{j+1}u_r[q]}_{L^\infty(E)}^\alpha\\
		&\leq
		C_{j}(1+N)^{j+1+\alpha}
		e^{-\frac{d_*N}{3} }\Norm{q}_\infty.
	\end{aligned}
	\]
	Since $j+1+\alpha<j+2$, these two bounds prove
	\eqref{eq:radial-velocity-decay} when $R=1$.
	
	For general $R>0$, set $\widetilde q(y):=q(Ry)$. Then
	$\widetilde q$ is supported in $\{a<|y|<b\}$, has the same angular
	periodicity and satisfies
	$\Norm{\widetilde q}_\infty=\Norm{q}_\infty$.
	By \eqref{eq:Riesz-scaling}, taking radial components gives
	$u_r[\widetilde q](y)=u_r[q](Ry)$. Hence
	\[
	\begin{aligned}
		\Norm{\nabla^j u_r[q]}_{L^\infty(E_R)}
		&=
		R^{-j}
		\Norm{\nabla^j u_r[\widetilde q]}_{L^\infty(E)},\\
		[\nabla^j u_r[q]]_{C^\alpha(E_R)}
		&=
		R^{-j-\alpha}
		[\nabla^j u_r[\widetilde q]]_{C^\alpha(E)}.
	\end{aligned}
	\]
	Multiplying the second identity by $R^\alpha$ and applying the
	$R=1$ estimate to $\widetilde q$ proves
	\eqref{eq:radial-velocity-decay}.
\end{proof}

\section{Construction of the approximate solution}
\label{sec:pseudo}

An outer radial profile generates a nondegenerate angular shear on the inner annulus. We transport an oscillatory profile by this flow to obtain the approximate solution and the desired Sobolev growth.

\subsection{Profiles and nondegenerate angular shear}

Choose real-valued radial profiles $f,g\in C_c^\infty(\R^2)$ satisfying
\begin{equation}\label{eq:profiles}
0\leq f\in C_c^\infty((4,5)),\quad g\in C_c^\infty((1,2)),\quad f,g\not\equiv0.
\end{equation}
Set
\[
\psi_f:=\Lambda^{-1}f,
\quad
u[f]:=\nabla^\perp\psi_f.
\]
By \cref{lem:radial}, $\psi_f$ is radial and, for $r>0$,
\[
u[f](r,\varphi)
=
\partial_r\psi_f(r)\evarphi,
\quad
u_r[f](r)=0,
\quad
u_\varphi[f](r)=\partial_r\psi_f(r).
\]
The velocity generated by $f$ is purely angular. We therefore define its angular speed by
\begin{equation}\label{eq:angular-velocity}
\Omega(r):=\frac{u_\varphi[f](r)}{r}=\frac{\partial_r\psi_f(r)}{r}.
\end{equation}

\begin{lemma}[Nondegenerate convex shear on the inner annulus]
	\label{lem:nondegenerate}
	Let $f$ and $g$ satisfy \eqref{eq:profiles}, and let $\Omega$ be
	defined by \eqref{eq:angular-velocity}. Set
	$\mathcal I:=\{x\in\R^2:1/2<|x|<3\}$. Then there exists a constant
	$c_f>0$, depending only on $f$, such that
	\begin{equation}\label{eq:nondegenerate}
		\Omega'(r)\geq c_f,
		\quad
		\Omega''(r)\geq c_f,
		\quad \frac12\leq r\leq3.
	\end{equation}
	In particular, both bounds hold for $r\in\supp g$.
\end{lemma}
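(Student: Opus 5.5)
The plan is to compute $\Omega$ explicitly on the disk $\{|x|<4\}$, which contains no part of $\supp f$, as a power series in $r^2$ with positive coefficients. Monotonicity and convexity will then follow from the positivity of a single coefficient.

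\emph{Step 1: kernel representation of $\psi_f$.} In two dimensions, $\Lambda^{-1}$ is convolution with $c_1|x|^{-1}$, where $c_1=(2\pi)^{-1}>0$. For $|x|=r<4$, write $y=\rho(\cos\psi,\sin\psi)$ with $4<\rho<5$. Since $f$ is radial,
\[
\psi_f(r)=c_1\int_4^5 f(\rho)\,\rho\int_{-\pi}^{\pi}\bigl(r^2+\rho^2-2r\rho\cos\psi\bigr)^{-1/2}\dd\psi\,\dd\rho .
\]

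\emph{Step 2: positive power series.} For $t=r/\rho\le 3/4$, use the Legendre generating function
\[
\bigl(1-2t\cos\psi+t^2\bigr)^{-1/2}=\sum_{n\ge0}P_n(\cos\psi)\,t^n ,
\]
which converges uniformly in $\psi$ because $|P_n|\le1$. Integrating term by term over $\psi$, the odd-$n$ terms vanish. The even terms give the classical identity
\[
\frac1{2\pi}\int_{-\pi}^{\pi}\bigl(1-2t\cos\psi+t^2\bigr)^{-1/2}\dd\psi=\sum_{k\ge0}a_k t^{2k},
\qquad a_k=\Bigl(\tfrac{(2k-1)!!}{(2k)!!}\Bigr)^2>0 .
\]
This is the series of $\tfrac{2}{\pi}K(t)$. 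It can be checked by expanding $(1-te^{i\psi})^{-1/2}(1-te^{-i\psi})^{-1/2}$ and extracting the zero Fourier mode. Hence
\[
\psi_f(r)=\sum_{k\ge0}b_k r^{2k},\qquad b_k=2\pi c_1a_k\int_4^5 f(\rho)\rho^{-2k}\dd\rho .
\]
Because $f\ge0$ and $f\not\equiv0$, every $b_k$ is strictly positive. Moreover $b_k\lesssim_f 4^{-2k}$, so the series and all its termwise derivatives converge uniformly on $[0,3]$. Termwise differentiation is therefore justified, and it also shows that $\psi_f$ is smooth and even there.

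\emph{Step 3: conclusion.} From the definition of $\Omega$ in the paper,
\[
\Omega(r)=\frac{\psi_f'(r)}{r}=\sum_{k\ge1}2k\,b_k\,r^{2k-2},
\]
which is again a power series in $r^2$ with positive coefficients. Differentiating term by term, every term of $\Omega'$ and of $\Omega''$ is nonnegative on $[\tfrac12,3]$. It therefore suffices to keep the $k=2$ term $4b_2r^2$:
\[
\Omega'(r)\ge 8b_2r\ge 4b_2,\qquad \Omega''(r)\ge 8b_2 .
\]
Taking $c_f:=4b_2$, which depends only on $f$, gives \eqref{eq:nondegenerate}. Since $\supp g\subset(1,2)\subset[\tfrac12,3]$, the bounds hold on $\supp g$ as well.

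The main obstacle is Step 2: one has to justify that the angular average of the Newtonian-type kernel $|x-y|^{-1}$ has an expansion with strictly positive coefficients. Odd-order terms must cancel and even-order terms must be positive, and this is exactly where the hypothesis $f\ge0$ enters. If the Legendre identity is not available, the fallback is to expand $(1-te^{i\psi})^{-1/2}(1-te^{-i\psi})^{-1/2}$ as a product of two binomial series with positive coefficients $\binom{-1/2}{n}(-1)^n$. The zero angular mode is then $\sum_n\bigl|\binom{-1/2}{n}\bigr|^2t^{2n}$, which is manifestly positive.
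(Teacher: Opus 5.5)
Your proposal is correct and is essentially the paper's argument: the proof of \cref{lem:nondegenerate} invokes part (i) of \cref{lem:appendix-SQG-shear}, which expands the angular average of $|x-y|^{-1}$ via the product of the two binomial series for $(1-\eta e^{\pm i\beta})^{-1/2}$ (your fallback), writes $\Omega$ as a power series in $r^2$ with coefficients $c_S\,2k\,a_kM_k>0$, and keeps only the $k=2$ term, giving the same constant $c_f=4c_Sa_2M_2$ (your $4b_2$). One small correction to your closing remark: positivity of the angular coefficients $a_k$ does not depend on $f$, and $f\ge0$ enters only through $M_k=\int_4^5 f(\rho)\rho^{-2k}\,\dd\rho>0$, exactly as you use it in Step~2.
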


\begin{proof}
	Part~\textup{(i)} of
	\cref{lem:appendix-SQG-shear} gives
	\[
	\Omega'(r)
	\geq
	8c_Sa_2M_2r,
	\quad
	\Omega''(r)
	\geq
	8c_Sa_2M_2,
	\quad 0<r<4,
	\]
	where
	\[
	M_2:=\int_4^5 f(\rho)\rho^{-4}\,\dd\rho.
	\]
	Since $f\geq0$ and $f\not\equiv0$, we have $M_2>0$. Set
	\[
	c_f:=4c_Sa_2M_2>0.
	\]
	For $\frac12\leq r\leq3$, the preceding estimate gives
	\[
	\Omega'(r)\geq 2c_fr\geq c_f,
	\quad
	\Omega''(r)\geq 2c_f,
	\]
	which proves \eqref{eq:nondegenerate}. The last assertion follows
	from $\supp g\subset(1,2)\subset(1/2,3)$.
\end{proof}

\subsection{The initial datum}

Fix a small constant $c_0>0$ and assume $1<s<2$. Let $\lambda\geq1$
and $N\in\mathbb N$, $N\geq2$ be large. For
$x=r(\cos\varphi,\sin\varphi)$, define
\begin{equation}\label{eq:initial-datum}
	\begin{aligned}
		\theta_{\rad,0}(x)
		&:=c_0\lambda^{1-s}f(\lambda r),\\
		\theta_{\osc,0}(x)
		&:=c_0\lambda^{1-s}N^{-s}g(\lambda r)\cos(N\varphi),\\
		\theta_0(x)
		&:=\theta_{\rad,0}(x)+\theta_{\osc,0}(x).
	\end{aligned}
\end{equation}
The support conditions on $f$ and $g$ give
\[
\supp\theta_{\rad,0}
\subset
\left\{x\in\R^2:
4\lambda^{-1}<|x|<5\lambda^{-1}\right\},
\]
and
\[
\supp\theta_{\osc,0}
\subset
\left\{x\in\R^2:
\lambda^{-1}<|x|<2\lambda^{-1}\right\}.
\]
Thus the two components are supported in disjoint annuli, with the
radial component outside the oscillatory one.

\begin{lemma}\label{lem:angular-Sobolev}
	Let $0<a<b<\infty$ and let
	\[
	h\in C_c^\infty((a,b);\mathbb C),
	\quad
	h\not\equiv0.
	\]
	For $\lambda\geq1$, an integer $N\geq2$ and $0\leq\rho\leq2$,
	\begin{equation}\label{eq:angular-Sobolev}
		\Norm{h(\lambda r)e^{iN\varphi}}_{H^\rho(\R^2)}
		\simeq_{h,\rho}
		\lambda^{\rho-1}N^\rho.
	\end{equation}
	If $h$ is real-valued, \eqref{eq:angular-Sobolev} also holds with
	$e^{iN\varphi}$ replaced by $\cos(N\varphi)$.
\end{lemma}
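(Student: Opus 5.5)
The plan is to reduce to $\lambda=1$ by scaling and then compute the norm using the angular Fourier decomposition, which is exact for a single mode. Set $F(x):=h(|x|)e^{iN\varphi}$ and $F_\lambda(x):=F(\lambda x)$. By \eqref{eq:Sobolev-scaling}, $\Norm{F_\lambda}_{\dot H^\rho}=\lambda^{\rho-1}\Norm{F}_{\dot H^\rho}$ and $\Norm{F_\lambda}_2=\lambda^{-1}\Norm{F}_2$. Since $\lambda\geq1$, the inhomogeneous norm satisfies
\[
\Norm{F_\lambda}_{H^\rho}^2\simeq_\rho \Norm{F_\lambda}_2^2+\Norm{F_\lambda}_{\dot H^\rho}^2=\lambda^{-2}\Norm{F}_2^2+\lambda^{2\rho-2}\Norm{F}_{\dot H^\rho}^2 .
\]
It therefore suffices to prove the unscaled bounds $\Norm{F}_2\simeq_h 1$ and $\Norm{F}_{\dot H^\rho}\simeq_{h,\rho}N^\rho$ for $0\leq\rho\leq2$. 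The lower-order term $\lambda^{-2}\Norm{F}_2^2\lesssim 1\leq \lambda^{2\rho-2}N^{2\rho}$ is then harmless, because $\rho\geq0$ and $\lambda,N\geq1$.

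The $L^2$ bound is immediate: $\Norm{F}_2^2=2\pi\int_a^b|h(r)|^2r\dd r$, a positive constant depending only on $h$. For the endpoint $\rho=2$, I would use $\Delta F=\bigl(h''+r^{-1}h'-N^2r^{-2}h\bigr)e^{iN\varphi}$. Since $r\in(a,b)$ stays away from $0$, the triangle inequality gives $\Norm{\Delta F}_2=N^2\Norm{r^{-2}h}_{L^2(r\dd r)}\bigl(1+O_h(N^{-2})\bigr)$. Hence $\Norm{F}_{\dot H^2}\simeq_h N^2$ once $N\geq N_0(h)$. For the finitely many $2\leq N<N_0$, the norm is a positive finite quantity (as $h\not\equiv0$ and $F\neq0$), so the equivalence holds after enlarging the constants.

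For intermediate $\rho$, the upper bound follows from interpolation: $\Norm{F}_{\dot H^\rho}\leq\Norm{F}_2^{1-\rho/2}\Norm{F}_{\dot H^2}^{\rho/2}\lesssim N^\rho$. The lower bound is where I expect the only real subtlety, and I would get it by interpolating in the reverse direction. One route is Hölder in frequency:
\[
\Norm{F}_{\dot H^1}^2\leq\Norm{F}_{\dot H^\rho}^{2\theta}\Norm{F}_{\dot H^2}^{2(1-\theta)}
\]
for $\rho<1$ (with the analogous version for $\rho>1$, or for $\rho<1$ using $\Norm F_2$ and $\Norm F_{\dot H^2}$ to bound $\Norm F_{\dot H^1}$ from $\Norm F_{\dot H^\rho}$). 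I would combine this with $\Norm{F}_{\dot H^1}^2=\Norm{\nabla F}_2^2=2\pi\int(|h'|^2+N^2r^{-2}|h|^2)r\dd r\simeq N^2$. The cleanest version is to bound $\Norm{F}_{\dot H^1}$ by $\Norm{F}_{\dot H^\rho}^{\theta}\Norm{F}_{\dot H^{2}}^{1-\theta}$ with $1=\theta\rho+2(1-\theta)$ for $\rho<1$. This gives $N\lesssim \Norm{F}_{\dot H^\rho}^\theta N^{2(1-\theta)}$, that is, $\Norm{F}_{\dot H^\rho}\gtrsim N^{(2\theta-1)/\theta}=N^\rho$. For $1<\rho<2$, I would interpolate $\dot H^1$ between $\dot H^0$ and $\dot H^\rho$: $N\lesssim \Norm F_2^{1-1/\rho}\Norm F_{\dot H^\rho}^{1/\rho}$, which gives $\Norm F_{\dot H^\rho}\gtrsim N^\rho$. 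The case $\rho=0$ is the $L^2$ computation.

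Finally, for real $h$, \cref{lem:real-complex-angular} applied with $\psi\equiv0$ and radial profile $h(\lambda\cdot)$ shows that the $\cos(N\varphi)$ norm equals $2^{-1/2}$ times the complex one, so \eqref{eq:angular-Sobolev} transfers directly.
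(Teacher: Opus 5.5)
Your proof is correct, but it gets the unit-scale bounds by a different route from the paper. The scaling reduction and the $\cos(N\varphi)$ step via \cref{lem:real-complex-angular} are the same in both.

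The paper treats $F_N=h(r)e^{iN\varphi}$ as a function on the cylinder $\R\times\T$. It asserts that Cartesian and polar Sobolev norms are equivalent on a fixed annulus, uniformly in $N$. It then reads off $\Norm{F_N}_{H^\rho}^2\simeq\int(1+\xi^2+N^2)^\rho|\widehat h(\xi)|^2\dd\xi\simeq N^{2\rho}$ from the mixed Fourier characterization, and uses a Poincar\'e inequality to pass to $\dot H^\rho$ before rescaling.

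You work with homogeneous norms on $\R^2$ throughout. The explicit polar formulas for $|\nabla F|^2$ and $\Delta F$ are legitimate because the support avoids the origin. They give $\Norm{F}_{\dot H^1}\simeq N$ exactly and $\Norm{F}_{\dot H^2}\lesssim N^2$. Log-convexity of $\rho\mapsto\Norm{F}_{\dot H^\rho}$ then transfers these to all $0\le\rho\le2$, with the lower bounds obtained by interpolating the exact $\dot H^1$ identity in reverse.

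The paper's route makes the $N^{2\rho}$ weight visible for every $\rho$ at once. It relies, however, on the uniform equivalence of fractional Sobolev norms under the polar change of variables, which is standard but not proved there. Your route uses only Plancherel and H\"older in frequency, and needs no Poincar\'e step.

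Two small remarks.
\begin{itemize}
\item Your $\dot H^0$--$\dot H^\rho$ interpolation already works at $\rho=2$. It gives $N\lesssim\Norm{F}_2^{1/2}\Norm{F}_{\dot H^2}^{1/2}$ for every $N\ge2$. So the large-$N$ expansion of $\Delta F$ and the exception for finitely many $N$ are unnecessary; only the immediate upper bound $\Norm{\Delta F}_2\lesssim N^2$ is needed at $\rho=2$.
\item The chain $\lambda^{-2}\Norm{F}_2^2\lesssim1\le\lambda^{2\rho-2}N^{2\rho}$ is misstated: $\lambda^{2\rho-2}N^{2\rho}<1$ whenever, for instance, $\rho=0$ and $\lambda>1$. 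The correct comparison is $\lambda^{-2}\le\lambda^{-2}(\lambda N)^{2\rho}=\lambda^{2\rho-2}N^{2\rho}$, and that is exactly what your stated reason ($\rho\ge0$, $\lambda,N\ge1$) proves.
\end{itemize}
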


\begin{proof}
	Set
	\[
	F_N(r,\varphi):=h(r)e^{iN\varphi}.
	\]
	Since $F_N$ is supported in a fixed annulus away from the
	origin, its Sobolev norms in Cartesian and polar coordinates
	are equivalent, with constants independent of $N$.
	Moreover, $F_N$ has only the angular Fourier mode $N$, so the
	Fourier characterization on 
	$\R\times\T$ gives
	\[
	\Norm{F_N}_{H^\rho(\R^2)}^2
	\simeq
	\int_{\R}
	(1+\xi^2+N^2)^\rho
	|\widehat h(\xi)|^2\,\dd\xi
	\simeq_{h,\rho}
	N^{2\rho},
	\]
	where $h$ is extended by zero outside $(a,b)$. For $\rho>0$, the
	Poincar\'e inequality on the fixed annulus also yields
	\[
	\Norm{F_N}_{\dot H^\rho}
	\simeq
	\Norm{F_N}_{H^\rho}
	\simeq_{h,\rho}
	N^\rho.
	\]
	
	Under the rescaling $x\mapsto\lambda x$, the polar variables transform
	as $(r,\varphi)\mapsto(\lambda r,\varphi)$. Hence
	\[
	F_N(\lambda x)=h(\lambda r)e^{iN\varphi}.
	\]
	Combining the $L^2$ scaling identity with
	\eqref{eq:Sobolev-scaling}, we obtain, for $\rho>0$,
	\begin{align*}
		\Norm{F_N(\lambda\cdot)}_{H^\rho}^2
		&\simeq
		\lambda^{-2}\Norm{F_N}_2^2
		+
		\lambda^{2\rho-2}
		\Norm{F_N}_{\dot H^\rho}^2\\
		&\simeq_{h,\rho}
		\lambda^{-2}
		+
		\lambda^{2\rho-2}N^{2\rho}\\
		&\simeq
		\lambda^{2\rho-2}N^{2\rho},
	\end{align*}
		where in the last step we used $\lambda\ge1$ and $N\ge2$ to discard the $\lambda^{-2}$ term. If $h$ is real-valued, applying \cref{lem:real-complex-angular} with $\psi=0$ gives the corresponding estimate for $\cos(N\varphi)$.
\end{proof}

\begin{corollary}\label{cor:initial-small}
For $\theta_{\rad,0}, \theta_{\osc,0}$ and $\theta_0$ as defined in \eqref{eq:initial-datum}, we have
	\begin{align}
		\|\theta_0\|_{H^s} &\lesssim_{f,g,s} c_0, \label{eq:initial-small}\\
	\|\theta_{\rad,0}\|_2 \simeq_f c_0\lambda^{-s},\quad&	\|\theta_{\osc,0}\|_2 \simeq_g c_0\lambda^{-s}N^{-s}. \label{eq:osc-L2-size}
	\end{align}
\end{corollary}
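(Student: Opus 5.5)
The three claims follow from the scaling identity \eqref{eq:Sobolev-scaling} together with \cref{lem:angular-Sobolev}. No dynamical input is needed. I treat the $L^2$ sizes \eqref{eq:osc-L2-size} first and then the $H^s$ bound \eqref{eq:initial-small}.

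\emph{The $L^2$ sizes.} In two dimensions a change of variables gives $\Norm{h(\lambda\cdot)}_2=\lambda^{-1}\Norm{h}_2$. Applied to the radial profile $f$, which is fixed and nonzero, this gives
\[
\Norm{\theta_{\rad,0}}_2=c_0\lambda^{1-s}\cdot\lambda^{-1}\Norm{f}_2\simeq_f c_0\lambda^{-s}.
\]
For the oscillatory part I apply \cref{lem:angular-Sobolev} with $\rho=0$, the real profile $g$ and $\cos(N\varphi)$. It gives $\Norm{g(\lambda r)\cos(N\varphi)}_2\simeq_g\lambda^{-1}$, so
\[
\Norm{\theta_{\osc,0}}_2\simeq_g c_0\lambda^{1-s}N^{-s}\lambda^{-1}=c_0\lambda^{-s}N^{-s}.
\]
One can also compute this directly: the integral $\int\cos^2(N\varphi)\,\dd\varphi=\pi$ does not depend on $N$.

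\emph{The $H^s$ bound.} By the triangle inequality it suffices to bound each component. For the oscillatory part, \cref{lem:angular-Sobolev} with $\rho=s\in(1,2)\subset[0,2]$ gives
\[
\Norm{\theta_{\osc,0}}_{H^s}\simeq c_0\lambda^{1-s}N^{-s}\lambda^{s-1}N^{s}=c_0.
\]
For the radial part I use $\Norm{h}_{H^s}^2\simeq\Norm{h}_2^2+\Norm{h}_{\dot H^s}^2$ together with \eqref{eq:Sobolev-scaling}:
\[
\Norm{f(\lambda\cdot)}_{H^s}\lesssim\lambda^{-1}\Norm{f}_2+\lambda^{s-1}\Norm{f}_{\dot H^s}\lesssim_{f,s}\lambda^{s-1},
\]
where the last step uses $\lambda\ge1$. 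Multiplying by $c_0\lambda^{1-s}$ gives $\Norm{\theta_{\rad,0}}_{H^s}\lesssim_{f,s}c_0$. Adding the two bounds yields \eqref{eq:initial-small}.

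\emph{Constants.} The only point that needs care is that every implicit constant depends only on $f$, $g$ and $s$, and not on $\lambda$ or $N$. This uniformity is built into \cref{lem:angular-Sobolev}, whose constants are independent of $\lambda\ge1$ and $N\ge2$. The hypotheses $\lambda\ge1$ and $N\ge2$ are also what allow the lower-order $\lambda^{-2}$ terms to be absorbed. No step is a real obstacle; the argument is bookkeeping of scaling exponents.
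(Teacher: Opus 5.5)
Your proposal is correct and follows the paper's proof essentially step for step. You get the $L^2$ sizes from $L^2$ scaling, and the $H^s$ bound from \cref{lem:angular-Sobolev} at $\rho=s$ together with the scaling identity \eqref{eq:Sobolev-scaling}, combined by the triangle inequality. Your splitting $\Norm{h}_{H^s}^2\simeq\Norm{h}_2^2+\Norm{h}_{\dot H^s}^2$ for the radial part just writes out the step the paper calls ``Sobolev scaling.''
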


\begin{proof}
	For $\rho=s$, Sobolev scaling and \cref{lem:angular-Sobolev} yield
	\[
	\Norm{\theta_{\rad,0}}_{H^s}
	\lesssim_f
	c_0\lambda^{1-s}\lambda^{s-1}
	=c_0,
	\]
	and
	\[
	\Norm{\theta_{\osc,0}}_{H^s}
	\lesssim_{g,s}
	c_0\lambda^{1-s}N^{-s}\lambda^{s-1}N^s
	=c_0.
	\]
Since $\theta_0=\theta_{\rad,0}+\theta_{\osc,0}$, the triangle inequality gives \eqref{eq:initial-small}.
The case $\rho=0$ follows directly from $L^2$ scaling:
	\[
	\|\theta_{\rad,0}\|_2 \simeq_f c_0\lambda^{-s},\quad 	\Norm{\theta_{\osc,0}}_2
	\simeq_g
	c_0\lambda^{1-s}N^{-s}\lambda^{-1}
	=
	c_0\lambda^{-s}N^{-s},
	\]
	which proves \eqref{eq:osc-L2-size}.
\end{proof}

\subsection{The approximate solution}
By \cref{lem:radial}, \eqref{eq:Riesz-scaling}, and the definition of $\theta_{\rad,0}$, the background velocity is
\[
u[\theta_{\rad,0}](r,\varphi)
=
c_0\lambda^{1-s}u_\varphi[f](\lambda r)\evarphi
=
c_0\lambda^{2-s}\Omega(\lambda r)r\evarphi.
\]
Set
\begin{equation}\label{eq:K-definition}
	K:=c_0\lambda^{2-s}.
\end{equation}
Then
\[
u[\theta_{\rad,0}](r,\varphi)=Kr\Omega(\lambda r)\evarphi, ~~
u[\theta_{\rad,0}]\cdot\nabla=K\Omega(\lambda r)\partial_\varphi.
\]

The approximate solution $\overline\theta$ satisfies the transport equation driven by the stationary background velocity:
\begin{equation*}
	\begin{cases}
		\partial_t\overline\theta+u[\theta_{\rad,0}]\cdot\nabla\overline\theta=0,\\
		\overline\theta(\cdot,0)=\theta_0.
	\end{cases}
\end{equation*}
Recall from \eqref{eq:initial-datum} that
$\theta_0=\theta_{\rad,0}+\theta_{\osc,0}$.
Linearity decomposes the approximate solution as
$\overline\theta=\overline\theta_{\rad}+\overline\theta_{\osc}$,
with initial data $\theta_{\rad,0}$ and $\theta_{\osc,0}$, respectively.
Since $\theta_{\rad,0}$ is radial, it is stationary under the flow
generated by $u[\theta_{\rad,0}]$, and hence
\[
\overline\theta_{\rad}(r,t)
=
\theta_{\rad,0}(r)
=
c_0\lambda^{1-s}f(\lambda r).
\]
Moreover,
$u[\theta_{\rad,0}]\cdot\nabla
=K\Omega(\lambda r)\partial_\varphi$, so the oscillatory part satisfies
\begin{equation}\label{eq:approximate-oscillatory}
	\left(
	\partial_t+K\Omega(\lambda r)\partial_\varphi
	\right)\overline\theta_{\osc}=0,
	\qquad
	\overline\theta_{\osc}(\cdot,0)=\theta_{\osc,0}.
\end{equation}
Solving \eqref{eq:approximate-oscillatory} by characteristics gives
\begin{align}
	\overline\theta(r,\varphi,t)
	&=
	\theta_0\bigl(
	r,\varphi-Kt\Omega(\lambda r)
	\bigr)\notag\\
	&=
	\underbrace{
		c_0\lambda^{1-s}f(\lambda r)
	}_{\overline\theta_{\rad}(r,t)}
	+
	\underbrace{
		c_0\lambda^{1-s}N^{-s}g(\lambda r)
		\cos\bigl(N\varphi-NKt\Omega(\lambda r)\bigr)
	}_{\overline\theta_{\osc}(r,\varphi,t)}.
	\label{eq:explicit-approximate}
\end{align}
Let
\[
\Phi_t(r,\varphi)
:=
N\varphi-NKt\Omega(\lambda r)
\]
denote the oscillatory phase. On
$\supp\overline\theta_{\osc}(t)$, we have
$\lambda r\in\supp g$. Hence \cref{lem:nondegenerate} gives
\[
|\partial_r\Phi_t(r,\varphi)|
=
\lambda NKt\Omega'(\lambda r)
\simeq_f
\lambda NKt,
\quad t>0.
\]
We next convert the growth of the phase gradient into a Sobolev estimate. For an ansatz $a(x)e^{i\mu\phi(x)}$ with $\mu\gg1$, a uniform Wentzel--Kramers--Brillouin (WKB) estimate gives a lower bound away from $t=0$ and an upper bound on the full time interval.

\begin{lemma}[WKB estimate for the oscillatory component]
	\label{lem:WKB}
	Let $0\leq\rho\leq2$ and fix $0<\tau<T$.
	For $\lambda$ sufficiently large depending on $\tau$ and $T$,
	we have
	\begin{equation}\label{eq:WKB}
		\Norm{\overline\theta_{\osc}(t)}_{H^\rho}
		\simeq_{f,g,\rho}
		\Norm{\theta_{\osc,0}}_2
		(\lambda NKt)^\rho,
		\quad t\in[\tau,T].
	\end{equation}
	In view of \eqref{eq:K-definition} and
	\eqref{eq:osc-L2-size}, this estimate can also be written as
	\begin{equation}\label{eq:pseudo-growth-explicit}
		\Norm{\overline\theta_{\osc}(t)}_{H^\rho}
		\simeq_{f,g,\rho}
		c_0^{1+\rho}t^\rho
		\lambda^{\rho(3-s)-s}N^{\rho-s},
		\quad t\in[\tau,T].
	\end{equation}
	On the full interval $[0,T]$, one also has
	\begin{equation}\label{eq:WKB-upper}
		\Norm{\overline\theta_{\osc}(t)}_{H^\rho}
		\lesssim_{f,g,\rho} \Norm{\theta_{\osc,0}}_2
		\bigl(\lambda N(1+Kt)\bigr)^\rho,
		\quad 0\leq t\leq T.
	\end{equation}
\end{lemma}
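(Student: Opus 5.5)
We reduce to the unit scale and to the complex exponential, and then use integer-order bounds plus interpolation. By \cref{lem:real-complex-angular} with $\psi(r)=-NKt\Omega(\lambda r)$, it suffices to treat
\[
F_t(x):=c_0\lambda^{1-s}N^{-s}\,G_t(\lambda x),
\qquad
G_t(y):=g(R)\,e^{i\mu\phi_t(y)},
\]
where $\mu:=N$ and $\phi_t(R,\varphi):=\varphi-Kt\,\Omega(R)$. By \eqref{eq:Sobolev-scaling} and the $L^2$ scaling, together with the Poincaré-type equivalence $\Norm{\cdot}_{H^\rho}\simeq\Norm{\cdot}_{L^2}+\Norm{\cdot}_{\dot H^\rho}$ used in \cref{lem:angular-Sobolev}, it suffices to prove at unit scale that
\[
\Norm{G_t}_{\dot H^\rho}\simeq \bigl(NKt\bigr)^\rho\ \text{ for } t\in[\tau,T],
\qquad
\Norm{G_t}_{\dot H^\rho}\lesssim \bigl(N(1+Kt)\bigr)^\rho\ \text{ for } t\in[0,T].
\]
The phase gradient satisfies
\[
|\nabla(N\phi_t)|^2 = N^2\Bigl(K^2t^2\Omega'(R)^2+R^{-2}\Bigr).
\]
On $\supp g\subset(1,2)$, \cref{lem:nondegenerate} gives $\Omega'\simeq_f 1$. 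Hence $|\nabla(N\phi_t)|\simeq N(1+Kt)$, and this is $\simeq NKt$ once $Kt\ge K\tau\gg1$, that is, for $\lambda$ large depending on $\tau$.

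\textbf{Integer orders.} At $\rho=0$ we have $\Norm{G_t}_2=\Norm{G_0}_2$ by measure preservation. At $\rho=1$,
\[
\nabla G_t = i N\nabla\phi_t\,G_t + e^{iN\phi_t}\nabla g.
\]
The first term has $L^2$ norm $\simeq N(1+Kt)\Norm{g}_2$, and the second is $O(1)$. This gives a two-sided bound with matching constants once $N(1+Kt)\gg1$. At $\rho=2$, write
\[
\Delta G_t = -N^2|\nabla\phi_t|^2G_t + iN(\Delta\phi_t)G_t + 2iN\nabla\phi_t\cdot\nabla g\,e^{iN\phi_t}+(\Delta g)e^{iN\phi_t}.
\]
Since $|\Delta\phi_t|\lesssim 1+Kt$ on the support (using $\Omega\in C^\infty$ near $[1,2]$), the lower-order terms are $O(N(1+Kt))$. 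The leading term is $\simeq N^2(1+Kt)^2$, so $\Norm{\Delta G_t}_2\simeq (N(1+Kt))^2$.

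\textbf{Fractional orders.} For $0<\rho<2$, the upper bound \eqref{eq:WKB-upper} follows from Sobolev interpolation between the cases $\rho=0$ and $\rho=2$. For the lower bound on $[\tau,T]$, we use the interpolation inequality in the reverse direction. If $0<\rho<1$,
\[
\Norm{G}_{\dot H^1}\le \Norm{G}_{\dot H^\rho}^{1/(2-\rho)}\Norm{G}_{\dot H^2}^{(1-\rho)/(2-\rho)}.
\]
Inserting the two-sided bounds at orders $1$ and $2$ yields $\Norm{G_t}_{\dot H^\rho}\gtrsim (NKt)^\rho$. If $1<\rho<2$, we use instead
\[
\Norm{G}_{\dot H^1}\le \Norm{G}_{L^2}^{(\rho-1)/\rho}\Norm{G}_{\dot H^\rho}^{1/\rho}.
\]
This gives the same conclusion, with constants depending on $\rho$, $f$ and $g$.

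\textbf{Rescaling and the explicit form.} We then return to the $x$ variable. This multiplies the $\dot H^\rho$ norm by $\lambda^{\rho-1}$, and the leading term dominates the $L^2$ part since $\lambda NKt\gg1$. Combined with $\Norm{\theta_{\osc,0}}_2\simeq c_0\lambda^{-s}N^{-s}$, this gives \eqref{eq:WKB}. Substituting $K=c_0\lambda^{2-s}$ gives \eqref{eq:pseudo-growth-explicit}.

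\textbf{Main obstacle.} The main difficulty is the $\rho=2$ lower bound, specifically ensuring that the error terms are genuinely lower order uniformly on $[\tau,T]$. This requires that $|\Delta\phi_t|$ grows at most like $Kt$, whereas the leading term grows like $(Kt)^2$. It holds because $\phi_t$ is linear in $Kt$ with smooth radial coefficients on the fixed annulus.
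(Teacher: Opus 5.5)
Your proof is correct, but it gets the fractional-order lower bounds by a different mechanism from the paper's.

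\textbf{The paper's route.} It first proves a general estimate $\Norm{ae^{i\mu\phi}}_{H^\rho}\simeq\mu^\rho$ for phases with $\inf_{\supp a}|\nabla\phi|\geq m$.
\begin{itemize}
\item For $0<\rho<1$ it bounds the Gagliardo double integral from below. It restricts to displacements $z\in\mu^{-1}B(\zeta_y,\delta)$, where $\zeta_y=\frac{\pi}{2}\nabla\phi(y)/|\nabla\phi(y)|^2$, so that the phase increment stays in $[\pi/3,2\pi/3]$.
\item For $1<\rho<2$ it applies the same argument to the vector amplitude $a\nabla\phi$ appearing in $\nabla(ae^{i\mu\phi})$.
\item For $\rho=2$ it uses second derivatives.
\end{itemize}
It then applies this inside a coordinate-chart cutoff, with $\mu=NKt$ and the normalized phase $\varphi/(Kt)-\Omega(R)$.

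\textbf{Your route.} You compute only the integer orders $\rho=0,1,2$ exactly. You then run the H\"older (log-convexity) interpolation inequality for $\rho\mapsto\Norm{G}_{\dot H^\rho}$ backwards, and your exponents are right. Since $e^{iN\varphi}$ is globally smooth and $\nabla\varphi=R^{-1}\evarphi$ is harmonic off the origin, you need neither the chart, the cutoff, nor the renormalization of the phase by $Kt$. This is shorter and more elementary. What it relies on is the two-sided control $|\nabla\phi_t|\simeq 1+Kt$ on all of $\supp g$, which follows here from $\Omega'\simeq_f 1$ on that set. The paper's difference-quotient argument instead gives a self-contained WKB lemma for general amplitude--phase pairs. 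There, the fractional lower bound comes from a local oscillation mechanism rather than from the integer endpoints.

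\textbf{A simplification.} The $\rho=2$ lower bound you flag as the main obstacle does not need a separate computation. The inequality $\Norm{G}_{\dot H^1}^2\le\Norm{G}_2\Norm{G}_{\dot H^2}$ gives it directly from the $\rho=1$ lower bound. So the only lower bound you actually have to compute is $\Norm{\nabla G_t}_2\gtrsim N(1+Kt)$, together with the $\dot H^2$ upper bound.
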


\begin{proof}
	We first establish the oscillatory estimate at a fixed spatial scale. Let
	$U\subset\R^2$ be open and let
	\[
	a\in C_c^2(U;\mathbb C),
	\quad
	a\not\equiv0.
	\]
	Suppose that $\phi:\mathbb R^2\to\mathbb R$ satisfies
	$\phi|_U\in C^3(U;\mathbb R)$ and
	\[
	\Norm{\phi}_{C^3(U)}\leq C_0,
	\quad
	\inf_{\supp a}|\nabla\phi|\geq m>0.
	\]
	We claim that, for every $0\leq\rho\leq2$, there exists $\mu_0\geq1$
	such that
	\begin{equation}\label{eq:uniform-WKB}
		\Norm{ae^{i\mu\phi}}_{H^\rho(\R^2)}
		\simeq_{a,U,C_0,m,\rho}
		\mu^\rho,
		\quad
		\mu\geq\mu_0.
	\end{equation}
The implicit constants and $\mu_0$ depend only on
$a$, $U$, $C_0$, $m$, and $\rho$.
	
	We first consider $0<\rho<1$ and set $F_\mu:=ae^{i\mu\phi}$.
	The pointwise estimate
	\[
	|F_\mu(y+z)-F_\mu(y)|
	\lesssim
	\min\{1,\mu|z|\}
	\]
	follows from the uniform bounds on $a$ and $\phi$.
	The difference $F_\mu(\cdot+z)-F_\mu$ is supported in
	$\supp a\cup(\supp a-z)$.
	Hence
	\[
	\Norm{F_\mu(\cdot+z)-F_\mu}_2^2
	\leq C\min\{1,\mu^2|z|^2\}.
	\]
	The Gagliardo characterization then gives
	\[
	[ae^{i\mu\phi}]_{\dot H^\rho}^2
	\lesssim
	\mu^2\int_0^{\mu^{-1}}r^{1-2\rho}\,\dd r
	+
	\int_{\mu^{-1}}^\infty r^{-1-2\rho}\,\dd r
	\lesssim
	\mu^{2\rho}.
	\]
	
	For the lower bound, choose a compact set $E\subset U$ of positive
	measure on which the amplitude does not vanish: for some $a_0>0$,
	\[
	|a(y)|\geq a_0
	\quad\text{for every }y\in E.
	\]
	For each \(y\in E\), set
	\[
	v_y:=\nabla\phi(y),
	\quad
	\zeta_y:=\frac{\pi}{2}\frac{v_y}{|v_y|^2}.
	\]
Since \(m\leq |v_y|\leq C_0\), we may choose
\(\delta=\delta(C_0,m)>0\) sufficiently small and set
\(\mathcal Z_y:=B(\zeta_y,\delta)\) so that, for every
\(y\in E\) and \(\zeta\in\mathcal Z_y\),
\[
\frac{\pi}{3}
\leq v_y\cdot\zeta
\leq\frac{2\pi}{3},
\quad
|\zeta|\simeq_{C_0,m}1.
\]
Since $E$ is compact and the sets $\mathcal Z_y$ are uniformly
bounded, for all sufficiently large $\mu$ we have
$
y+\frac{\zeta}{\mu}\in U
$
uniformly for $y\in E$ and $\zeta\in\mathcal Z_y$. For \(z=\zeta/\mu\), Taylor expansion gives
\[
\mu\bigl(\phi(y+z)-\phi(y)\bigr)
=
v_y\cdot\zeta+O(\mu^{-1}),
\quad
a(y+z)=a(y)+O(\mu^{-1}).
\]
The phase increment stays away from a multiple of \(2\pi\) on \(\mathcal Z_y\). Together with \(|a(y)|\geq a_0\) on \(E\), this yields a constant \(c>0\) and a sufficiently large \(\mu_1\ge1\) such that
\[
\left|
a\Bigl(y+\frac{\zeta}{\mu}\Bigr)
e^{i\mu\phi(y+\zeta/\mu)}
-
a(y)e^{i\mu\phi(y)}
\right|
\geq c
\]
for all \(y\in E\), \(\zeta\in\mathcal Z_y\), and \(\mu\ge\mu_1\).

	Restricting the Gagliardo integral to $y\in E$ and
	$z\in\mu^{-1}\mathcal Z_y$, we obtain
	\[
	[ae^{i\mu\phi}]_{\dot H^\rho}^2
	\gtrsim
	\int_E\int_{\mu^{-1}\mathcal Z_y}
	\frac{1}{|z|^{2+2\rho}}\,\dd z\dd y
	\gtrsim
	\mu^{2\rho}.
	\]
	Here we used $|z|\simeq\mu^{-1}$ and
	$|\mu^{-1}\mathcal Z_y|\simeq\mu^{-2}$ uniformly in $y$. Together
	with the preceding upper bound, this proves
	\eqref{eq:uniform-WKB} for $0<\rho<1$.
	
	For $1<\rho<2$, write $\rho=1+\beta$ with $0<\beta<1$.
	For $j=1,2$, differentiation gives
	\begin{equation}\label{eq:WKB-first-derivative}
		\partial_j(ae^{i\mu\phi})
		=
		i\mu(a\,\partial_j\phi)e^{i\mu\phi}
		+
		(\partial_j a)e^{i\mu\phi}.
	\end{equation}
	Set $b:=a\nabla\phi$. On the fixed set $E$ chosen above,
	$|b|\geq a_0m$, and the $C^1$ norm of $b$ is uniformly bounded.
	Applying the same difference-quotient argument to the vector
	amplitude $b$ gives
	\[
	c\mu^\beta
	\leq\Norm{be^{i\mu\phi}}_{H^\beta}
	\leq C\mu^\beta.
	\]
	The upper bound also gives
	$\Norm{(\nabla a)e^{i\mu\phi}}_{H^\beta}\leq C\mu^\beta$.
	Thus \eqref{eq:WKB-first-derivative} and the triangle inequality yield
	\[
	\Norm{\nabla(ae^{i\mu\phi})}_{H^\beta}
	\geq c\mu^{1+\beta}-C\mu^\beta
	\geq\frac{c}{2}\mu^{1+\beta}
	\]
	for sufficiently large $\mu$, uniformly in the allowed phases.
	Together with the corresponding upper bound, this gives
	\[
	\sum_{j=1}^2
	\Norm{\partial_j(ae^{i\mu\phi})}_{H^\beta}
	\simeq \mu^{1+\beta}.
	\]
	Together with the $L^2$ bound, this proves \eqref{eq:uniform-WKB}
for $1<\rho<2$. The cases $\rho=0$ and $\rho=1$ follow directly
	from the $L^2$ norm and \eqref{eq:WKB-first-derivative}, respectively.
	
For $\rho=2$, differentiating twice gives, for $j,k=1,2$,
\[
\partial_j\partial_k(ae^{i\mu\phi})
=
-\mu^2a\,\partial_j\phi\,\partial_k\phi\,e^{i\mu\phi}
+R_{\mu,jk},
\quad
\Norm{R_{\mu,jk}}_2\lesssim\mu.
\]
Since $|\nabla\phi|$ is bounded above and below by positive
constants on $\supp a$ and $a\not\equiv0$, the leading terms
on the right-hand side have total $L^2$ norm comparable to $\mu^2$.
Thus, for sufficiently large $\mu$, the triangle inequality gives
\[
\sum_{j,k=1}^2
\Norm{\partial_j\partial_k(ae^{i\mu\phi})}_2
\simeq \mu^2.
\]
Together with the $L^2$ and first derivative estimates, this proves
	\eqref{eq:uniform-WKB} for $\rho=2$ and completes the proof of
the auxiliary WKB estimate \eqref{eq:uniform-WKB}.

At the fixed scale, set $H_t(R,\varphi):=e^{iN(\varphi-Kt\Omega(R))}$. For the lower bound, choose a smooth cutoff $\chi$, supported in a coordinate chart on which a smooth branch of the polar angle
$\varphi$ is defined, such that $\chi g\not\equiv0$. For $t\in[\tau,T]$ and $K\tau\geq1$, the local phase $\psi_t:=\frac{\varphi}{Kt}-\Omega(R)$ has uniformly bounded $C^3$ norm and satisfies
\[
|\nabla\psi_t|^2
=\frac{1}{K^2t^2R^2}+|\Omega'(R)|^2
\geq c_f^2
\]
on $\supp(\chi g)$. Applying \eqref{eq:uniform-WKB} to $\chi gH_t$ with $\mu=NKt$, and using boundedness of multiplication by $\chi$ on $H^\rho$, gives
\[
\|gH_t\|_{H^\rho}\gtrsim_{f,g,\rho}(NKt)^\rho
\]
for sufficiently large $\lambda$.

For the upper bound, $N\in\mathbb N$ makes $gH_t$ globally smooth. Direct differentiation gives
\[
\|gH_t\|_2=\|g\|_2,\qquad
\|gH_t\|_{H^2}\lesssim_{f,g}[N(1+Kt)]^2.
\]
Interpolation and $1+Kt\lesssim Kt$ on $[\tau,T]$ yield
\begin{equation}\label{eq:fixed-scale-growth}
	\|gH_t\|_{H^\rho}
	\simeq_{f,g,\rho}(NKt)^\rho,
	\qquad t\in[\tau,T].
\end{equation}
The same interpolation on $[0,T]$ gives
\begin{equation}\label{eq:fixed-scale-upper}
	\|gH_t\|_{H^\rho}
	\lesssim_{f,g,\rho}[N(1+Kt)]^\rho,
	\qquad 0\leq t\leq T.
\end{equation}
By \cref{lem:real-complex-angular}, both estimates hold for $g\operatorname{Re}H_t$.	

Rescaling from $y=\lambda x$ to $x$, we obtain, for $\rho>0$,
\begin{equation}\label{eq:osc-Sobolev-scaling}
	\Norm{\overline\theta_{\osc}(t)}_{H^\rho}
	\simeq_\rho
	c_0\lambda^{1-s}N^{-s}
	\left(
	\lambda^{-1}\Norm{gH_t}_2
	+
	\lambda^{\rho-1}\Norm{gH_t}_{\dot H^\rho}
	\right).
\end{equation}
Since $gH_t$ is supported in a fixed annulus, the Poincar\'e inequality makes its $\dot H^\rho$ and $H^\rho$ norms equivalent. Applying \eqref{eq:fixed-scale-growth} in \eqref{eq:osc-Sobolev-scaling}, together with \eqref{eq:osc-L2-size}, proves \eqref{eq:WKB} on $[\tau,T]$. Using \eqref{eq:fixed-scale-upper} instead proves \eqref{eq:WKB-upper} on $[0,T]$. For $\rho=0$, both bounds follow directly from $L^2$ scaling. Finally, substituting $K=c_0\lambda^{2-s}$ and \eqref{eq:osc-L2-size} into \eqref{eq:WKB} gives \eqref{eq:pseudo-growth-explicit}.
\end{proof}

\section{Norm inflation across a Sobolev regularity gap}\label{sec:norm-inflation}

The approximate solution has the required Sobolev growth on
$[\tau,T]$, but we must still show that the exact solution exists
up to $T$. For this, we use coordinates defined by the flow of $u[\overline\theta_{\rad}]$: the large background transport disappears from the $C^{1,\alpha}$ estimate, while the remaining velocity is governed by a conjugated operator whose bound depends polynomially on $Kt$. The parameter choice below makes the time integral of this velocity small enough to close the continuation bootstrap. We then compare the exact and approximate solutions to transfer the Sobolev lower bound.

\subsection{Coordinates adapted to the background shear}
\label{sec:pullback}

Throughout this section, we work with the rescaled variable
\[
y=\lambda x
=
R(\cos\varphi,\sin\varphi),
\quad
R=|y|=\lambda r.
\]
Under this rescaling, the transport velocity becomes
$\lambda u[\cdot]$. Let $\cS_t$ denote the flow generated by $Ku[f]$.
Then
\begin{equation}\label{eq:background-shear-flow}
	\cS_t(R,\varphi)
	=
	\bigl(R,\varphi+Kt\Omega(R)\bigr),
	\quad
	\cS_t^{-1}(R,\varphi)
	=
	\bigl(R,\varphi-Kt\Omega(R)\bigr).
\end{equation}
Although $\cS_t$ is written in polar coordinates, all spatial
derivatives and H\"older norms below are taken with respect to the
Cartesian variable $y$. In particular, $D\cS_t(y)$ denotes the
Jacobian matrix in the standard Cartesian basis.

Both maps in \eqref{eq:background-shear-flow} preserve Lebesgue
measure and the radial coordinate.
By \eqref{eq:background-shear-flow}, we have
\begin{equation}\label{eq:background-flow-Linfty}
	\Norm{D\cS_t}_\infty
	+
	\Norm{D(\cS_t^{-1})}_\infty
	\leq C_f(1+Kt),
	\quad t\geq0,
\end{equation}
where both norms are taken over $\R^2$.
Differentiating $\cS_t^{-1}\circ\cS_t=\operatorname{Id}$ gives
\[
(D\cS_t(y))^{-1}
=
D(\cS_t^{-1})(\cS_t(y)).
\]
Consequently, the derivative of the inverse has the same supremum norm
as the pointwise matrix inverse:
\[
\Norm{(D\cS_t)^{-1}}_\infty
=
\Norm{D(\cS_t^{-1})}_\infty.
\]

The derivative bounds give the composition estimate needed below.
\begin{lemma}\label{lem:composition}
	Let $0<\alpha<1$, $t\geq0$, and
	$\Phi\in\{\cS_t,\cS_t^{-1}\}$, with the maps defined in
	\eqref{eq:background-shear-flow}. For every scalar or vector field
	$h\in C^{1,\alpha}(\R^2)$,
	\begin{equation}\label{eq:composition-C1a}
		\Norm{h\circ\Phi}_{C^{1,\alpha}(\R^2)}
		\leq
		C_{f,\alpha}(1+Kt)^{1+\alpha}
		\Norm{h}_{C^{1,\alpha}(\R^2)}.
	\end{equation}
\end{lemma}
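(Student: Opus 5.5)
The plan is to estimate the three pieces of the $C^{1,\alpha}$ norm of $h\circ\Phi$ separately: the sup norm, the first derivatives, and the H\"older seminorm of the first derivatives. We use only the chain rule, \eqref{eq:background-flow-Linfty}, and a Lipschitz bound for $D\Phi$ with the same polynomial dependence on $Kt$.

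The sup norm is immediate: $\Norm{h\circ\Phi}_\infty=\Norm{h}_\infty$, because $\Phi$ is a bijection of $\R^2$. For the first derivatives, the chain rule gives $D(h\circ\Phi)=(Dh\circ\Phi)\,D\Phi$. Then \eqref{eq:background-flow-Linfty} yields
\[
\Norm{D(h\circ\Phi)}_\infty\le C_f(1+Kt)\Norm{Dh}_\infty .
\]
For the H\"older seminorm we split the product in the usual way:
\[
[(Dh\circ\Phi)D\Phi]_{C^\alpha}\le [Dh\circ\Phi]_{C^\alpha}\Norm{D\Phi}_\infty+\Norm{Dh}_\infty[D\Phi]_{C^\alpha}.
\]
In the first term, $[Dh\circ\Phi]_{C^\alpha}\le [Dh]_{C^\alpha}\,\mathrm{Lip}(\Phi)^\alpha$, with $\mathrm{Lip}(\Phi)\le\Norm{D\Phi}_\infty\le C_f(1+Kt)$ because $\R^2$ is convex. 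Hence the first term is bounded by $C_f(1+Kt)^{1+\alpha}[Dh]_{C^\alpha}$.

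The second term is the only place where care is needed: we need $[D\Phi]_{C^\alpha}\lesssim_{f,\alpha}(1+Kt)^{1+\alpha}$. We will prove it by interpolating $[D\Phi]_{C^\alpha}\le 2\Norm{D\Phi}_\infty^{1-\alpha}\,\mathrm{Lip}(D\Phi)^\alpha$. This reduces the task to a bound $\Norm{D^2\Phi}_\infty\le C_f(1+Kt)^2$. That would give $(1+Kt)^{1-\alpha}(1+Kt)^{2\alpha}=(1+Kt)^{1+\alpha}$, as desired.

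To get the second-derivative bound, write $\Phi$ in Cartesian form as $\Phi(y)=\mathrm{Rot}(\pm Kt\,\Omega(|y|))\,y$. Here $\Omega(R)=\partial_R\psi_f(R)/R$ is smooth on all of $\R^2$, since $\psi_f$ is a smooth radial function ($f$ vanishes near the origin, and $\Lambda^{-1}f$ is smooth and radial). As $R\to\infty$, the far-field bound \cref{lem:far-field} gives $|\nabla^j u[f]|\lesssim R^{-2-j}$, so $\Omega$ and all its Cartesian derivatives are globally bounded; in fact $\Omega(|y|)$ together with $|y|\nabla\Omega$ and $|y|\nabla^2\Omega$ are bounded. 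Differentiating twice, each derivative either falls on the rotation matrix, producing a factor $Kt\nabla\Omega$ (two such factors at most), or falls on $y$. A derivative landing on $y$ is paired with a factor $|y|$ that is absorbed by the decay of $\nabla\Omega$.

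The resulting terms are of the form $(Kt)^2|\nabla\Omega|^2|y|$, $Kt|\nabla^2\Omega||y|$, and $Kt|\nabla\Omega|$. All of these are uniformly bounded by $C_f(1+Kt)^2$ on $\R^2$. The decay $|y||\nabla\Omega|^2\lesssim |y|^{-5}$ handles the region $|y|$ large, and smoothness handles compact sets. This is the same computation that yields \eqref{eq:background-flow-Linfty}, carried one order higher.

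Summing the three pieces gives \eqref{eq:composition-C1a}. The main obstacle is this global second-derivative bound for $\Phi$. If the decay at infinity turned out to be awkward, the fallback would be that later applications only involve $h$ supported in a fixed annulus, where $\Phi$ restricted to that annulus is trivially controlled.
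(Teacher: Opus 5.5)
Your proposal is correct and follows the paper's proof essentially step by step. You use the chain rule and the H\"older product estimate, the bounds $\Norm{D\Phi}_\infty\lesssim_f 1+Kt$ and $\Norm{D^2\Phi}_\infty\lesssim_f (1+Kt)^2$ obtained by differentiating the rotation form of $\Phi$, and interpolation to get $[D\Phi]_{C^\alpha}\lesssim_{f,\alpha}(1+Kt)^{1+\alpha}$; the only difference is that you justify smoothness of $\Omega(|y|)$ at the origin and its decay at infinity via smoothness of $\Lambda^{-1}f$ and \cref{lem:far-field}, whereas the paper uses the series expansions in part (iii) of \cref{lem:appendix-SQG-shear}, and both work.
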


\begin{proof}
	Write $\Phi=\cS_t^\varepsilon$ with $\varepsilon\in\{-1,1\}$ and
	set $b(R):=\varepsilon Kt\Omega(R)$. For $R=|y|$, the Cartesian
	components of $\Phi$ are
	\begin{equation*}
	\begin{aligned}
		\Phi_1(y)&=y_1\cos b(R)-y_2\sin b(R),\\
		\Phi_2(y)&=y_1\sin b(R)+y_2\cos b(R).
	\end{aligned}
	\end{equation*}
	For $R>0$, differentiation in the Cartesian variable $y_i$ gives
	\begin{equation}\label{eq:shear-first-derivatives}
	\begin{aligned}
		\partial_i\Phi_1
		&=\delta_{i1}\cos b(R)-\delta_{i2}\sin b(R)
		-b'(R)\frac{y_i}{R}\Phi_2,\\
		\partial_i\Phi_2
		&=\delta_{i1}\sin b(R)+\delta_{i2}\cos b(R)
		+b'(R)\frac{y_i}{R}\Phi_1.
	\end{aligned}
	\end{equation}
	Here $\delta_{ij}$ denotes the Kronecker delta. From
	$\partial_iR=\frac{y_i}{R}$,
	$\partial_j\partial_iR=\frac{\delta_{ij}}{R}-\frac{y_iy_j}{R^3}$,
	and \eqref{eq:shear-first-derivatives}, we obtain, for
	$i,j,k\in\{1,2\}$,
	\begin{equation}\label{eq:shear-derivative-bounds}
		\begin{aligned}
			|\partial_i\Phi_k(y)|
			&\leq 1+KtR|\Omega'(R)|,\\
			|\partial_i\partial_j\Phi_k(y)|
			&\leq
			CKt\bigl(|\Omega'(R)|+R|\Omega''(R)|\bigr)
			+C(Kt)^2R|\Omega'(R)|^2.
		\end{aligned}
	\end{equation}
	
	By part (iii) of \cref{lem:appendix-SQG-shear}, both 
	$\Omega(|y|)$ and $\Phi$ are smooth at the origin.
	Moreover, \eqref{eq:appendix-Omega-global} gives
	\[
	|\Omega'(R)|\leq C_f(1+R)^{-4},
	\quad
	|\Omega''(R)|\leq C_f(1+R)^{-5}.
	\]
Consequently,
	\[
	\begin{aligned}
		\Norm{\partial_i\Phi_k}_\infty
		&\leq C_f(1+Kt),\\
		\Norm{\partial_i\partial_j\Phi_k}_\infty
		&\leq C_f\bigl(Kt+(Kt)^2\bigr)
		\leq C_f(1+Kt)^2.
	\end{aligned}
	\]
	Interpolation yields
	\[
	\begin{aligned}
		[\partial_i\Phi_k]_{C^\alpha}
		&\leq
		C_\alpha
		\Norm{\partial_i\Phi_k}_\infty^{1-\alpha}
		\left(
		\sum_{j=1}^2
		\Norm{\partial_j\partial_i\Phi_k}_\infty
		\right)^\alpha\\
		&\leq C_{f,\alpha}(1+Kt)^{1+\alpha}.
	\end{aligned}
	\]
	Since $\Phi$ is a diffeomorphism, the chain rule and the
	preceding bounds give
\[
\Norm{h\circ\Phi}_\infty=\Norm{h}_\infty,
\quad
\Norm{\partial_i(h\circ\Phi)}_\infty
\leq C_f(1+Kt)
\sum_{k=1}^2\Norm{\partial_k h}_\infty.
\]
The H\"older product estimate gives
	\[
	\begin{aligned}
		[\partial_i(h\circ\Phi)]_{C^\alpha}
		&\leq
		\sum_{k=1}^2
		\Bigl(
		[(\partial_k h)\circ\Phi]_{C^\alpha}
		\Norm{\partial_i\Phi_k}_\infty
		+
		\Norm{\partial_k h}_\infty
		[\partial_i\Phi_k]_{C^\alpha}
		\Bigr)\\
		&\leq
		C_{f,\alpha}(1+Kt)^{1+\alpha}
		\Norm{h}_{C^{1,\alpha}(\R^2)}.
	\end{aligned}
	\]
	Summing over $i$ and using the $L^\infty$ bound proves
	\eqref{eq:composition-C1a}.
\end{proof}

By \eqref{eq:background-shear-flow}, both $\cS_t$ and
$\cS_t^{-1}$ preserve the radial coordinate. We work on the
invariant annulus
\[
\mathcal A:=\left\{y\in\R^2:\frac12<|y|<6\right\}.
\]
We write $y=R\er(\varphi)$, where
\[
\er(\varphi)=(\cos\varphi,\sin\varphi),\qquad
\evarphi(\varphi)=(-\sin\varphi,\cos\varphi).
\]
These are the same polar unit vectors used for $(r,\varphi)$;
only the radial variable has changed from $r$ to $R$. Thus
$u_\nu[F](y)=u[F](y)\cdot e_\nu(\varphi)$ for
$\nu\in\{r,\varphi\}$. 
For $H\in C_c^{1,\alpha}(\R^2)$ with
$\supp H\subset\mathcal A$, define
\begin{equation}\label{eq:pullback-operator}
	\cB_t[H](y)
	:=(D\cS_t(y))^{-1}
	\lambda u[H\circ\cS_t^{-1}](\cS_t(y)).
\end{equation}
The profile $f$ is supported in $4<R<5$. By
\cref{lem:appendix-SQG-shear}, its angular velocity satisfies
$\Omega'>0$ on $0<R<4$. We therefore single out the inner annulus
\[
\mathcal I:=\left\{y\in\R^2:\frac12<|y|<3\right\}
\subset\mathcal A,
\qquad \inf_{\overline{\mathcal I}}\Omega'>0.
\]

On $\mathcal I$, the weighted radial identity cancels the $Kt$
term arising when the polar velocity components are differentiated.
On $\mathcal A$, this term is not canceled and yields an additional factor of $1+Kt$.
\begin{proposition}[Bounds for $\cB_t$]
	\label{prop:pullback-velocity}
	Let $0<\alpha<1$, $t\geq0$, and
	$H\in C_c^{1,\alpha}(\R^2)$ satisfy $\supp H\subset\mathcal A$.
	Then
	\begin{equation}\label{eq:pullback-velocity-bound}
		\Norm{\cB_t[H]}_{C^{1,\alpha}(\mathcal A)}
		\leq C_{f,\alpha}\lambda(1+Kt)^{2+2\alpha}
		\Norm H_{C^{1,\alpha}(\mathcal A)}.
	\end{equation}
	If, in addition, $\supp H\subset\mathcal I$, the sharper local bound
	\begin{equation}\label{eq:pullback-velocity-bound-inner}
		\Norm{\cB_t[H]}_{C^{1,\alpha}(\mathcal I)}
		\leq C_{f,\alpha}\lambda(1+Kt)^{1+2\alpha}
		\Norm H_{C^{1,\alpha}(\mathcal I)}
	\end{equation}
	holds. The constants are independent of $K,t,\lambda$, and $H$.
\end{proposition}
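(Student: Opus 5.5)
We will express $\cB_t[H]$ in the polar frame and reduce the estimate to bounds on the polar velocity components of the pulled-back field. Set $G:=H\circ\cS_t^{-1}$. Since $\cS_t$ rotates each circle $\{|y|=R\}$ by the angle $b(R)=Kt\Omega(R)$, we have $D\cS_t\,\er=\er(\varphi+b)+Rb'(R)\evarphi(\varphi+b)$ and $D\cS_t\,\evarphi=\evarphi(\varphi+b)$. Inverting this triangular structure gives
\[
\cB_t[H]
=\lambda\Bigl(U_r\,\er+\bigl(U_\varphi-Rb'(R)U_r\bigr)\evarphi\Bigr),
\qquad U_\nu:=u_\nu[G]\circ\cS_t,\ \nu\in\{r,\varphi\}.
\]
So it suffices to bound the scalar functions $U_r$, $U_\varphi$ and $Rb'U_r$ in $C^{1,\alpha}$. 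The frame vectors $\er,\evarphi$ are smooth on $\mathcal A$ with $t$-independent bounds. For the first bound, the factor $Rb'=O(Kt)$ is the source of one extra power of $1+Kt$.

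Next we differentiate $U_\nu$ in polar coordinates. By rotation covariance, $\partial_\varphi U_\nu=u_\nu[\partial_\varphi G]\circ\cS_t$, and $\partial_\varphi G=(\partial_\varphi H)\circ\cS_t^{-1}$. For the radial derivative, the chain rule gives
\[
\partial_R U_\nu=(\partial_R u_\nu[G])\circ\cS_t+b'\,(\partial_\varphi u_\nu[G])\circ\cS_t .
\]
We would also write $\partial_R G=(\partial_RH-b'\partial_\varphi H)\circ\cS_t^{-1}$. This is where the inner bound comes from. On $\mathcal I$ we apply the weighted operator $\Omega'^{-1}\partial_R$, as in \eqref{eq:idea-radial-commutator}, and commute it through $u_\nu$. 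The term $b'\partial_\varphi$ coming from $\partial_RG$ then exactly cancels the $b'\partial_\varphi u_\nu[G]$ term, up to the commutator $[\Omega'^{-1}\partial_R,u_\nu]G$. Hence
\[
\Omega'^{-1}\partial_RU_\nu=\bigl(u_\nu[(\Omega'^{-1}\partial_RH)\circ\cS_t^{-1}]+[\Omega'^{-1}\partial_R,u_\nu]G\bigr)\circ\cS_t .
\]
The key point is that no explicit factor of $Kt$ appears here. For $H$ supported only in $\mathcal A$ we skip this cancellation. Instead we bound $\partial_RU_\nu$ directly, which costs one factor of $b'\lesssim Kt$.

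The analytic inputs are then as follows. First, $u_\nu$ (and $u$) is bounded on $C^\alpha$ for functions compactly supported in $\mathcal A$; this is standard Calder\'on--Zygmund theory, and $u_\nu$ differs from $u$ only by multiplication with the smooth frame. Second, the commutator $[\Omega'^{-1}\partial_R,u_\nu]$ maps $C^\alpha(\mathcal I)$ boundedly into $C^\alpha(\mathcal I)$. Writing $\Omega'^{-1}\partial_R=a(y)\cdot\nabla$ with smooth $a$ (after cutoff), this is a commutator of a Calder\'on--Zygmund operator with a smooth first-order operator, hence an operator of order zero. We then estimate every factor with \cref{lem:composition}:
\[
\Norm{\Omega'^{-1}\partial_RH\circ\cS_t^{-1}}_{C^\alpha}\lesssim(1+Kt)^{\alpha}\Norm H_{C^{1,\alpha}},\qquad \Norm G_{C^\alpha}\lesssim(1+Kt)^\alpha\Norm H_{C^\alpha}.
\]
Composing with $\cS_t$ costs a further $(1+Kt)^\alpha$. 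Combining these gives $[\nabla U_\nu]_{C^\alpha(\mathcal I)}\lesssim(1+Kt)^{2\alpha}\Norm H_{C^{1,\alpha}}$, with bounded sup norms; the $L^\infty$ part is controlled by a $C^\alpha$ bound.

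The remaining factor comes from the radial-component term $Rb'U_r$, which carries one explicit power of $Kt$. A crude count gives $(1+Kt)^{1+2\alpha}$ for the inner bound and one more power of $1+Kt$ on $\mathcal A$, as stated. The main obstacle is making the commutator bound rigorous in the local setting. We need to localize with cutoffs, handle the far-field parts (smooth kernels, via \cref{lem:far-field}), and check that the commutator constants depend only on $f$ and not on $t$. The $t$-independence should hold because the commutator acts on $G$ itself, and all $t$-dependence is carried by the composition estimates.
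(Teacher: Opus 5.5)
Your proposal is correct and follows the paper's proof essentially step for step. Both use the triangular polar-frame form of $(D\cS_t)^{-1}$, rotation covariance for $\partial_\varphi$, and the cancellation of the two $Kt\,\partial_\varphi u_\nu[G]$ terms under $\Omega'^{-1}\partial_R$ on $\mathcal I$; both also rely on $C^\alpha$ bounds for $u_\nu$ and its commutator with a smooth vector field, with a factor $(1+Kt)^\alpha$ per composition with $\cS_t^{\pm1}$. The only differences are these. On $\mathcal A$ you bound $\partial_R U_\nu$ via the chain rule and \cref{lem:composition}, whereas the paper goes through the commutator $[\partial_R+Kt\Omega'\partial_\varphi,u_\nu]$, which gives its slightly sharper tame form $\Norm H_{C^{1,\alpha}}+Kt\Norm H_{C^\alpha}$ but the same stated bound. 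You also take the $C^\alpha$ commutator estimate as standard, where the paper verifies it by an explicit kernel computation.
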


\begin{proof}
	Set $h:=H\circ\cS_t^{-1}$. Differentiating
	\eqref{eq:background-shear-flow} in the moving polar frames gives
	\begin{align*}
		D\cS_t\,\er(\varphi)
		&=\er(\varphi+Kt\Omega)
		+KtR\Omega'\evarphi(\varphi+Kt\Omega),\\
		D\cS_t\,\evarphi(\varphi)
		&=\evarphi(\varphi+Kt\Omega).
	\end{align*}
	In the corresponding polar bases, $(D\cS_t)^{-1}$ has matrix
	$\bigl(\begin{smallmatrix}1&0\\-KtR\Omega'&1\end{smallmatrix}\bigr)$.
	Substituting this into \eqref{eq:pullback-operator}, we have
	\begin{equation}\label{eq:pullback-polar-identity}
		\begin{aligned}
			\cB_t[H]
			={}&\lambda\er\,\bigl(u_r[h]\circ\cS_t\bigr)\\
			&+\lambda\evarphi\Bigl(
			u_\varphi[h]\circ\cS_t
			-KtR\Omega'\bigl(u_r[h]\circ\cS_t\bigr)
			\Bigr).
		\end{aligned}
	\end{equation}
	We first estimate the scalar fields $u_\nu[h]\circ\cS_t$,
	$\nu\in\{r,\varphi\}$.
	
	Let $O_\beta$ denote rotation by $\beta$. Rotation covariance gives
	$u[F\circ O_\beta](y)=O_{-\beta}u[F](O_\beta y)$.
	Since $O_\beta e_\nu(\varphi)=e_\nu(\varphi+\beta)$, taking polar
	components and differentiating at $\beta=0$, we get
	\begin{equation}\label{eq:polar-angular-commutation}
		\partial_\varphi u_\nu[F]=u_\nu[\partial_\varphi F],
		\qquad \nu\in\{r,\varphi\}.
	\end{equation}
	The flow $\cS_t$ changes only the angular coordinate. Hence
	\eqref{eq:polar-angular-commutation} also gives
	\begin{equation}\label{eq:local-angular-cancellation}
		\partial_\varphi\bigl(u_\nu[h]\circ\cS_t\bigr)
		=u_\nu[(\partial_\varphi H)\circ\cS_t^{-1}]
		\circ\cS_t.
	\end{equation}
	
	We next differentiate in $R$. On $\mathcal A$, the chain rule gives
	\[
	(\partial_R+Kt\Omega'\partial_\varphi)h
	=(\partial_RH)\circ\cS_t^{-1}.
	\]
	Applying $\partial_R+Kt\Omega'\partial_\varphi$ to $u_\nu[h]$
	and commuting this differential operator with $u_\nu$, we obtain
	\begin{equation}\label{eq:pullback-radial-commutation}
		\partial_R\bigl(u_\nu[h]\circ\cS_t\bigr)
		=\Bigl(u_\nu[(\partial_RH)\circ\cS_t^{-1}]+[\partial_R+Kt\Omega'\partial_\varphi,u_\nu]h
		\Bigr)\circ\cS_t.
	\end{equation}
	The last commutator retains a factor of $Kt$, but it acts on $h$
	without differentiating it.
	
For functions supported in $\mathcal I$, we can instead use
	$\Omega'^{-1}\partial_R$. Indeed, $\Omega'$ is bounded away from zero on $\mathcal I$, and
	\[
	\Omega'^{-1}\partial_Rh
	=(\Omega'^{-1}\partial_RH)\circ\cS_t^{-1}
	-Kt\partial_\varphi h.
	\]
	Differentiating the outer composition contributes
	$Kt\partial_\varphi u_\nu[h]$. By
	\eqref{eq:polar-angular-commutation}, these two $Kt$ terms cancel.
	Thus
	\begin{equation}\label{eq:pullback-weighted-radial-commutation}
		\Omega'^{-1}\partial_R
		\bigl(u_\nu[h]\circ\cS_t\bigr)
		=\Bigl(u_\nu[
		(\Omega'^{-1}\partial_RH)\circ\cS_t^{-1}]+[\Omega'^{-1}\partial_R,u_\nu]h
		\Bigr)\circ\cS_t.
	\end{equation}

	We claim the following $C^\alpha$ bounds for the polar velocity
	components and their commutators. For $\nu\in\{r,\varphi\}$ and
	$F$ supported in $\mathcal A$,
	\begin{equation}\label{eq:pullback-first-commutator}
		\Norm{u_\nu[F]}_{C^\alpha(\mathcal A)}
		+\Norm{[\partial_R,u_\nu]F}_{C^\alpha(\mathcal A)}
		+\Norm{[\Omega'\partial_\varphi,u_\nu]F}
		_{C^\alpha(\mathcal A)}
		\leq C_{f,\alpha}\Norm F_{C^\alpha(\mathcal A)}.
	\end{equation}
	For $F$ supported in $\mathcal I$, we also claim
	\begin{equation}\label{eq:local-fixed-commutator}
		\Norm{u_\nu[F]}_{C^\alpha(\mathcal I)}
		+\Norm{[\Omega'^{-1}\partial_R,u_\nu]F}
		_{C^\alpha(\mathcal I)}
		\leq C_{f,\alpha}\Norm F_{C^\alpha(\mathcal I)}.
	\end{equation}
	
	Both bounds follow from the same kernel calculation. Since
	$\partial_R=\er\cdot\nabla$ and
	$\partial_\varphi=R\evarphi\cdot\nabla$, the relevant vector fields
	are $\er$ and $R\Omega'\evarphi$ on $\mathcal A$, and
	$\Omega'^{-1}\er$ on $\mathcal I$. Multiplying each by a fixed
	smooth radial cutoff equal to one near the corresponding closed
	annulus gives a smooth, compactly supported extension. For the
	last field, this uses $\inf_{\overline{\mathcal I}}\Omega'>0$.

	Fix one such extension $a$ and let
	$F\in C_c^{1,\alpha}(\R^2)$ be supported in the corresponding
	annulus. In the following kernel estimates, all output norms
	are taken on this annulus, while norms of $F$ are taken over
	$\R^2$. For $\epsilon>0$, set
	\[
	D_\epsilon(y):=\{y':|y-y'|>\epsilon\},\qquad
	u_{\nu,\epsilon}[F](y)
	:=e_\nu(y)\cdot\int_{D_\epsilon(y)}
	\cK(y-y')F(y')\,\dd y'.
	\]
	Integration by parts in $u_{\nu,\epsilon}[a\cdot\nabla F]$ gives
	\begin{align*}
		[a\cdot\nabla,u_{\nu,\epsilon}]F(y)
		={}&(a(y)\cdot\nabla e_\nu(y))\cdot
		\int_{D_\epsilon(y)}\cK(y-y')F(y')\,\dd y'\\
		&+\int_{D_\epsilon(y)}e_\nu(y)\cdot
		\Bigl\{(a(y)-a(y'))\cdot\nabla\cK(y-y')
		+(\operatorname{div}a)(y')\cK(y-y')\Bigr\}
		F(y')\,\dd y'\\
		&+B_{\nu,\epsilon}(y),
	\end{align*}
	where the difference of the two inner boundary terms is
	\[
	B_{\nu,\epsilon}(y)
	=\int_{|w|=\epsilon}e_\nu(y)\cdot\cK(w)
	\bigl(a(y)-a(y-w)\bigr)\cdot\frac w\epsilon
	F(y-w)\,\dd\sigma(w).
	\]
	There is no boundary term at infinity because $F$ and $a$ are
	compactly supported. Setting $w=\epsilon\omega$ and expanding $a$, we obtain
	\[
	B_{\nu,\epsilon}(y)
	=F(y)\int_{\mathbb S^1}e_\nu(y)\cdot\cK(\omega)
	\bigl(Da(y)\omega\cdot\omega\bigr)\,\dd\sigma(\omega)
	+O\!\left(\epsilon^\alpha[F]_{C^\alpha}
	+\epsilon\Norm F_\infty\right).
	\]
The integral vanishes because $\cK$ is odd and
$Da(y)\omega\cdot\omega$ is even. Thus
$B_{\nu,\epsilon}\to0$ uniformly on the observation annulus,
and the commutator satisfies
	\begin{equation}\label{eq:pullback-fixed-kernel-commutator}
		\begin{aligned}
			[a\cdot\nabla,u_\nu]F(y)
			={}&(a(y)\cdot\nabla e_\nu(y))\cdot u[F](y)\\
			&+\pv\int_{\R^2}e_\nu(y)\cdot
			\Bigl((a(y)-a(y'))\cdot\nabla\cK(y-y')\Bigr)
			F(y')\,\dd y'\\
			&+\pv\int_{\R^2}e_\nu(y)\cdot
			\Bigl((\operatorname{div}a)(y')\cK(y-y')\Bigr)
			F(y')\,\dd y'.
		\end{aligned}
	\end{equation}
	The first term vanishes for the radial choices
	$a=\er$ on $\mathcal A$ and
	$a=\Omega'^{-1}\er$ on $\mathcal I$.
	For $a=R\Omega'\evarphi$ on $\mathcal A$, it equals
	$\Omega'u_\varphi[F]$ when $\nu=r$ and
	$-\Omega'u_r[F]$ when $\nu=\varphi$.
	
	We estimate the two principal value integrals in
	\eqref{eq:pullback-fixed-kernel-commutator} together. Set
	$w=y-y'$ and denote their combined kernel by
	\[
	\mathcal L_{\nu,a}(y,w)
	:=e_\nu(y)\cdot\Bigl(
	(a(y)-a(y-w))\cdot\nabla\cK(w)
	+(\operatorname{div}a)(y-w)\cK(w)
	\Bigr).
	\]
	Taylor expansion at $y$ gives, for $0<|w|<1$,
	\[
	\mathcal L_{\nu,a}(y,w)
	=\sum_{i,j,m=1}^2
	e_{\nu,m}(y)\partial_j a_i(y)\,
	w_j\partial_i\cK_m(w)+(\operatorname{div}a)(y)
	\sum_{m=1}^2e_{\nu,m}(y)\cK_m(w)
	+E_{\nu,a}(y,w),
	\]
	where
	\[
	|E_{\nu,a}(y,w)|\leq C_f|w|^{-1},
	\qquad
	|\nabla_yE_{\nu,a}(y,w)|+|\nabla_wE_{\nu,a}(y,w)|
	\leq C_f|w|^{-2}.
	\]
	Choose a fixed radial cutoff $\eta$ supported in $B_1$ and equal
	to one near the origin. For
	$k\in\{\cK_m,w_j\partial_i\cK_m: i,j,m\in\{1,2\}\}$, set
	\[
	T_kF(y):=\pv\int\eta(w)k(w)F(y-w)\,\dd w.
	\]
	Thus \eqref{eq:pullback-fixed-kernel-commutator} becomes
	\[
	\begin{aligned}
		[a\cdot\nabla,u_\nu]F(y)
		={}&(a(y)\cdot\nabla e_\nu(y))\cdot u[F](y)+\sum_{i,j,m=1}^2
		e_{\nu,m}(y)\partial_j a_i(y)\,
		T_{w_j\partial_i\cK_m}F(y)\\
		&+(\operatorname{div}a)(y)
		\sum_{m=1}^2e_{\nu,m}(y)T_{\cK_m}F(y)
		+\mathcal E_{\nu,a}F(y),
	\end{aligned}
	\]
	where
	\[
	\mathcal E_{\nu,a}F(y)
	:=\int_{\R^2}
	\Bigl[\eta(w)E_{\nu,a}(y,w)
	+(1-\eta(w))\mathcal L_{\nu,a}(y,w)\Bigr]
	F(y-w)\,\dd w.
	\]

	Each $k$ is odd and homogeneous of degree $-2$. Its zero angular
	mean gives
	\[
	T_kF(y)
	=\int\eta(w)k(w)\bigl(F(y-w)-F(y)\bigr)\,\dd w,
	\qquad
	\Norm{T_kF}_\infty\leq C_\alpha\Norm F_{C^\alpha}.
	\]

	To estimate the H\"older seminorm, let $d=|y-\bar y|<1/6$
	and split each integral at radius $3d$ about its center.
	For each $x\in\{y,\bar y\}$, we have
	\[
	\left|\int_{|w|<3d}\eta(w)k(w)
	\bigl(F(x-w)-F(x)\bigr)\,\dd w\right|
	\leq C_\alpha d^\alpha[F]_{C^\alpha}.
	\]
Because both the cutoff $\mathbf 1_{\{|w|\geq 3d\}}$ and $\eta(w)$
are radial, while $k$ has zero angular mean, the integral of the
truncated kernel against a constant vanishes. Hence, by comparing the
two truncated kernels and using the pointwise bounds for $k$, $\nabla k$,
$\eta$ and $\nabla\eta$, we obtain
\[
\begin{aligned}
	&\Bigg|
	\int_{|y-z|\geq3d}\eta(y-z)k(y-z)F(z)\,\dd z
	-\int_{|\bar y-z|\geq3d}\eta(\bar y-z)k(\bar y-z)F(z)\,\dd z
	\Bigg|\\
	&=
	\Bigg|
	\int_{|y-z|\geq3d}\eta(y-z)k(y-z)
	\bigl(F(z)-F(y)\bigr)\,\dd z
	-\int_{|\bar y-z|\geq3d}\eta(\bar y-z)k(\bar y-z)
	\bigl(F(z)-F(y)\bigr)\,\dd z
	\Bigg|\\
	&\leq
	C[F]_{C^\alpha}
	\left(
	d\int_{3d}^1 r^{\alpha-2}\,\dd r+d^\alpha
	\right)
	+Cd\Norm F_{C^\alpha}.
\end{aligned}
\]
	Since $
	d\int_{3d}^1r^{\alpha-2}\,\dd r
	\leq C_\alpha d^\alpha,$
	the near and far bounds give
	\[
	|T_kF(y)-T_kF(\bar y)|
	\leq C_\alpha d^\alpha\Norm F_{C^\alpha}.
	\]
		The size and derivative bounds
	for the local kernel of $\mathcal E_{\nu,a}$, together with
	the smoothness of its far kernel, give
	\[
	\begin{aligned}
		\Norm{\mathcal E_{\nu,a}F}_\infty
		&\leq C_f\Norm F_\infty,\\
		|\mathcal E_{\nu,a}F(y)-\mathcal E_{\nu,a}F(\bar y)|
		&\leq C_f\Norm F_\infty
		\left(
		\int_0^{4d}r^{-1}r\,\dd r
		+d\int_{4d}^1r^{-2}r\,\dd r+d
		\right)\\
		&\leq C_{f,\alpha}d^\alpha\Norm F_\infty.
	\end{aligned}
	\]
	For $d\geq1/6$, the supremum bound gives the same H\"older
	estimate. Moreover, for each Cartesian component,
	\[
	(u[F])_m(y)
	=T_{\cK_m}F(y)
	+\int_{\R^2}(1-\eta(w))\cK_m(w)F(y-w)\,\dd w.
	\]
	The second kernel is smooth away from $w=0$. Thus the estimate
	for $T_kF$, the bound above for $\mathcal E_{\nu,a}F$, and the
	commutator decomposition give
	\[
	\Norm{[a\cdot\nabla,u_\nu]F}_{C^\alpha}
	\leq C_{f,\alpha}
	\left(
	\Norm{u[F]}_{C^\alpha}
	+\sum_k\Norm{T_kF}_{C^\alpha}
	+\Norm{\mathcal E_{\nu,a}F}_{C^\alpha}
	\right)
	\leq C_{f,\alpha}\Norm F_{C^\alpha}.
	\]
	Here $F$ is supported in the relevant annulus, so its zero
	extension has global $C^\alpha$ norm controlled by its annular
	norm. This proves \eqref{eq:pullback-first-commutator} and
	\eqref{eq:local-fixed-commutator}.
	
	By \eqref{eq:background-flow-Linfty}, composition with either
	$\cS_t$ or $\cS_t^{-1}$ satisfies
	\[
	\Norm{G\circ\cS_t^{\pm1}}_{C^\alpha}
	\leq C_{f,\alpha}(1+Kt)^\alpha\Norm G_{C^\alpha}
	\]
	on each invariant annulus. When $H$ is supported in that
	annulus, \eqref{eq:local-angular-cancellation} and the fixed
	velocity bound yield
	\[
	\Norm{u_\nu[h]\circ\cS_t}_{C^\alpha}
	+\Norm{\partial_\varphi(u_\nu[h]\circ\cS_t)}_{C^\alpha}
	\leq C_{f,\alpha}(1+Kt)^{2\alpha}
	\Norm H_{C^{1,\alpha}}.
	\]
For the radial derivative,
\eqref{eq:pullback-radial-commutation} and
\eqref{eq:pullback-first-commutator} lead to
	\[
	\begin{aligned}
		\Norm{\partial_R(u_\nu[h]\circ\cS_t)}
		_{C^\alpha(\mathcal A)}
		&\leq C_{f,\alpha}(1+Kt)^\alpha
		\left(
		\Norm{(\partial_RH)\circ\cS_t^{-1}}_{C^\alpha(\mathcal A)}
		+(1+Kt)\Norm h_{C^\alpha(\mathcal A)}
		\right)\\
		&\leq C_{f,\alpha}(1+Kt)^{2\alpha}
		\left(
		\Norm H_{C^{1,\alpha}(\mathcal A)}
		+Kt\Norm H_{C^\alpha(\mathcal A)}
		\right).
	\end{aligned}
	\]
	If $\supp H\subset\mathcal I$, then
	\eqref{eq:pullback-weighted-radial-commutation} and
	\eqref{eq:local-fixed-commutator} instead give
	\[
	\begin{aligned}
		\Norm{\partial_R(u_\nu[h]\circ\cS_t)}
		_{C^\alpha(\mathcal I)}
		&\leq C_{f,\alpha}(1+Kt)^\alpha
		\left(
		\Norm{(\Omega'^{-1}\partial_RH)\circ\cS_t^{-1}}
		_{C^\alpha(\mathcal I)}
		+\Norm h_{C^\alpha(\mathcal I)}
		\right)\\
		&\leq C_{f,\alpha}(1+Kt)^{2\alpha}
		\Norm H_{C^{1,\alpha}(\mathcal I)}.
	\end{aligned}
	\]
	Since
	$\nabla=\er\partial_R+R^{-1}\evarphi\partial_\varphi$
	on the fixed annuli, we obtain
	\begin{align}
		\Norm{u_\nu[h]\circ\cS_t}_{C^{1,\alpha}(\mathcal A)}
		&\leq C_{f,\alpha}(1+Kt)^{2\alpha}
		\left(
		\Norm H_{C^{1,\alpha}(\mathcal A)}
		+Kt\Norm H_{C^\alpha(\mathcal A)}
		\right),
		\label{eq:polar-components-pullback-full}\\
		\Norm{u_\nu[h]\circ\cS_t}_{C^{1,\alpha}(\mathcal I)}
		&\leq C_{f,\alpha}(1+Kt)^{2\alpha}
		\Norm H_{C^{1,\alpha}(\mathcal I)},
		\qquad \supp H\subset\mathcal I.
		\notag
	\end{align}
	
	The coefficients in \eqref{eq:pullback-polar-identity} satisfy,
	on either annulus,
	\[
	\Norm{\er}_{C^{1,\alpha}}
	+\Norm{\evarphi}_{C^{1,\alpha}}
	+Kt\Norm{R\Omega'\evarphi}_{C^{1,\alpha}}
	\leq C_{f,\alpha}(1+Kt).
	\]
	The product estimate therefore gives
	\[
	\begin{aligned}
		\Norm{\cB_t[H]}_{C^{1,\alpha}(\mathcal A)}
		&\leq C_{f,\alpha}\lambda(1+Kt)^{1+2\alpha}
		\left(
		\Norm H_{C^{1,\alpha}(\mathcal A)}
		+Kt\Norm H_{C^\alpha(\mathcal A)}
		\right)\\
		&\leq C_{f,\alpha}\lambda(1+Kt)^{2+2\alpha}
		\Norm H_{C^{1,\alpha}(\mathcal A)},\\
		\Norm{\cB_t[H]}_{C^{1,\alpha}(\mathcal I)}
		&\leq C_{f,\alpha}\lambda(1+Kt)^{1+2\alpha}
		\Norm H_{C^{1,\alpha}(\mathcal I)},
		\qquad \supp H\subset\mathcal I,
	\end{aligned}
	\]
which proves \eqref{eq:pullback-velocity-bound} and
	\eqref{eq:pullback-velocity-bound-inner}.
\end{proof}

The same commutators give integer Sobolev bounds. Composition with
$\cS_t^{\pm1}$ is isometric in $L^2$ on either annulus. 

\begin{lemma}
	\label{lem:pullback-H2}
	Let $t\geq0$, $m\in\{0,1,2\}$, and
	$H\in H^2(\R^2)$ satisfy $\supp H\subset\mathcal A$.
	Then
	\begin{equation}\label{eq:pullback-velocity-H2}
		\Norm{\cB_t[H]}_{H^m(\mathcal A)}
		\leq C_f\lambda(1+Kt)^{m+1}
		\Norm{H}_{H^m(\mathcal A)}.
	\end{equation}
	If $\supp H\subset\mathcal I$, then
	\begin{equation}\label{eq:pullback-velocity-H2-inner}
		\Norm{\cB_t[H]}_{H^m(\mathcal I)}
		\leq C_f\lambda(1+Kt)
		\Norm{H}_{H^m(\mathcal I)}.
	\end{equation}
	The constants are independent of $K,t,\lambda$, and $H$.
\end{lemma}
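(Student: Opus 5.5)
We mirror the proof of \cref{prop:pullback-velocity}, replacing H\"older spaces by $L^2$-based spaces. The plan is to reduce the claim to two ingredients: isometry of composition with $\cS_t^{\pm1}$ in $L^2$, and $L^2$ boundedness of the polar velocity components and their commutators.

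\emph{Reduction to polar components.} Set $h:=H\circ\cS_t^{-1}$ and start from the polar identity \eqref{eq:pullback-polar-identity}. Because $\cS_t$ preserves Lebesgue measure and each annulus, $\Norm{G\circ\cS_t^{\pm1}}_{L^2(\mathcal A)}=\Norm G_{L^2(\mathcal A)}$. We then need the $L^2$ analogues of \eqref{eq:pullback-first-commutator} and \eqref{eq:local-fixed-commutator}. Namely, $u_\nu$, $[\partial_R,u_\nu]$ and $[\Omega'\partial_\varphi,u_\nu]$ should be bounded on $L^2$ for $F$ supported in $\mathcal A$, and $[\Omega'^{-1}\partial_R,u_\nu]$ should be bounded on $L^2$ for $F$ supported in $\mathcal I$. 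Boundedness of $u_\nu$ follows from Plancherel, since $|e_\nu|=1$. For the commutators we use the kernel decomposition \eqref{eq:pullback-fixed-kernel-commutator}. After the Taylor expansion, the principal parts are $T_k$ with $k$ odd and homogeneous of degree $-2$, truncated by $\eta$; these are Calder\'on--Zygmund operators, hence bounded on $L^2$. The remainder $\mathcal E_{\nu,a}$ has a kernel bounded by $C|w|^{-1}$ locally and is smooth far away, so Schur's test bounds it on $L^2$ of a bounded set. The zeroth-order terms $(a\cdot\nabla e_\nu)\cdot u[F]$ are harmless.

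\emph{Order by order.} For $m=0$, identity \eqref{eq:pullback-polar-identity} gives $\Norm{\cB_t[H]}_{L^2}\le C\lambda(1+Kt)\Norm H_2$, because the coefficient $KtR\Omega'$ is bounded by $C(1+Kt)$. This already settles \eqref{eq:pullback-velocity-H2} and \eqref{eq:pullback-velocity-H2-inner} at $m=0$.

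For $m=1$, write $\nabla=\er\partial_R+R^{-1}\evarphi\partial_\varphi$ and handle the two derivatives separately. The angular derivative is controlled by \eqref{eq:local-angular-cancellation}, which gives $\Norm{\partial_\varphi(u_\nu[h]\circ\cS_t)}_2\lesssim\Norm{\partial_\varphi H}_2$. The radial derivative on $\mathcal A$ is handled by \eqref{eq:pullback-radial-commutation}, giving the bound $\Norm{\partial_RH}_2+Kt\Norm H_2$. On $\mathcal I$ we use instead \eqref{eq:pullback-weighted-radial-commutation}, which gives $\Norm{\Omega'^{-1}\partial_RH}_2+\Norm H_2$ without any $Kt$ factor. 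Multiplying by the coefficients, with $\Norm{KtR\Omega'}_{W^{1,\infty}}\lesssim 1+Kt$, yields the factor $(1+Kt)^2$ on $\mathcal A$ and $(1+Kt)$ on $\mathcal I$. The essential point is that, unlike in the H\"older case, composition costs nothing in $L^2$, so no fractional powers $(1+Kt)^\alpha$ appear.

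For $m=2$, we iterate the same identities. Apply $\partial_\varphi$ or $\Omega'^{-1}\partial_R$ (respectively $\partial_R+Kt\Omega'\partial_\varphi$ on $\mathcal A$) a second time to the right-hand sides of \eqref{eq:local-angular-cancellation}--\eqref{eq:pullback-weighted-radial-commutation}. This produces terms $u_\nu[(XYH)\circ\cS_t^{-1}]$, terms $[X,u_\nu]$ applied to first derivatives of $h$ that have been pulled back to derivatives of $H$, and double commutators $[X,[Y,u_\nu]]$ applied to $h$. Here $X,Y$ range over $\partial_\varphi$ and $\Omega'^{-1}\partial_R$, and on $\mathcal A$ over $\partial_R$ and $\Omega'\partial_\varphi$ as well. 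A double commutator with smooth vector fields is again of Calder\'on--Zygmund type: repeating the expansion of \eqref{eq:pullback-fixed-kernel-commutator} gives kernels of the form $(a(y)-a(y'))(b(y)-b(y'))\nabla^2\cK$ plus lower-order pieces, all of which are bounded on $L^2$. On $\mathcal I$, every $Kt$ cancels exactly as in the first-order step, so only the single factor $(1+Kt)$ from the coefficient in \eqref{eq:pullback-polar-identity} survives. On $\mathcal A$, each uncanceled radial step contributes one extra factor $(1+Kt)$, giving $(1+Kt)^{3}$ in total.

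\emph{Expected main obstacle.} The hardest step is the $m=2$ bookkeeping. One must check that all second-order commutators, including those with the $Kt$-weighted field $Kt\Omega'\partial_\varphi$ on $\mathcal A$, are $L^2$ bounded with constants depending only on $f$. On $\mathcal I$ one must also verify that every $Kt$ cancels when a $\partial_R$ falls on the composition, particularly when it falls on $\cS_t$ inside the term $\partial_\varphi(u_\nu[h]\circ\cS_t)$. We would handle both points by expressing all vector fields in the commuting pair $(\Omega'^{-1}\partial_R,\partial_\varphi)$ and using \eqref{eq:polar-angular-commutation} repeatedly.
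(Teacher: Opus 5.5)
Your proposal is correct and follows essentially the same route as the paper. Both arguments reduce via \eqref{eq:pullback-polar-identity} to the polar components, use that $\cS_t^{\pm1}$ preserves measure, prove $L^2$ bounds for $u_\nu$ and its first and double commutators through Calder\'on--Zygmund kernels, and iterate \eqref{eq:pullback-radial-commutation} and \eqref{eq:pullback-weighted-radial-commutation}; the exact $Kt$ cancellation on $\mathcal I$ then yields the sharper inner bound. The differences are cosmetic: the paper first reduces to smooth $H$ by density, and it handles the $\mathcal A$ commutators through the single normalized field $(\er+KtR\Omega'\evarphi)/(1+Kt)$, whereas you expand bilinearly in $\partial_R$ and $\Omega'\partial_\varphi$.
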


\begin{proof}
	By density, it suffices to consider smooth $H$ with the stated
	support. Set $h:=H\circ\cS_t^{-1}$.
	
	Let $a$ be one of the fixed extensions used in
	\cref{prop:pullback-velocity}, or a linear combination of the
	two extensions on $\mathcal A$ with uniformly bounded
	coefficients. Commuting
	\eqref{eq:pullback-fixed-kernel-commutator} once more gives
	kernels of the form
	\[
	w^\beta\partial^\mu\cK(w),
	\qquad w=y-y',
	\qquad |\beta|=|\mu|\leq2,
	\]
	with smooth bounded coefficients, and a remainder bounded by
	$C_f|w|^{-1}$ near zero. The principal kernels are odd and
	homogeneous of degree $-2$, so their principal-value operators
	are bounded on $L^2$ \cite{Stein}. 	The remainder kernel is bounded by $C_f|y-y'|^{-1}$
	near $y=y'$ and is bounded elsewhere on the fixed annuli.
	Consequently, its integrals in either variable are uniformly
	bounded, and the corresponding operator is bounded on $L^2$.
	Thus, for $\nu\in\{r,\varphi\}$,
	$\mathcal D\in\{\mathcal A,\mathcal I\}$, and
	$F\in C_c^\infty(\mathcal D)$,
	\[
	\begin{aligned}
		&\Norm{u_\nu[F]}_{L^2(\mathcal D)}
		+\Norm{[a\cdot\nabla,u_\nu]F}_{L^2(\mathcal D)}\\
		&\qquad
		+\Norm{[a\cdot\nabla,[a\cdot\nabla,u_\nu]]F}_{L^2(\mathcal D)}
		\leq C_f\Norm{F}_{L^2(\mathcal D)}.
	\end{aligned}
	\]

	On $\mathcal A$, take the extension of
	$a=(\er+KtR\Omega'\evarphi)/(1+Kt)$.
	The first and second commutators with
	$\partial_R+Kt\Omega'\partial_\varphi$ are therefore bounded
	on $L^2$ by $C_f(1+Kt)$ and $C_f(1+Kt)^2$, respectively.
	On $\mathcal I$, $\partial_\varphi$ commutes with both
	$u_\nu$ and $\Omega'^{-1}\partial_R$, by
	\eqref{eq:polar-angular-commutation}. Hence the first two
	commutators with
	$\Omega'^{-1}\partial_R+Kt\partial_\varphi$
	are independent of $Kt$.
	
	Iterating \eqref{eq:pullback-radial-commutation} and
	\eqref{eq:pullback-weighted-radial-commutation}, and using
	the measure-preserving property of $\cS_t$, we obtain,
	for integers $j,\ell\geq0$ with $j+\ell\leq2$,
	\[
	\begin{aligned}
		&\Norm{
			(\Omega'^{-1}\partial_R)^j\partial_\varphi^\ell
			(u_\nu[h]\circ\cS_t)
		}_{L^2(\mathcal I)}
		\leq C_f\sum_{k=0}^j
		\Norm{
			(\Omega'^{-1}\partial_R)^k\partial_\varphi^\ell H
		}_{L^2(\mathcal I)},\\
		&\Norm{
			\partial_R^j\partial_\varphi^\ell
			(u_\nu[h]\circ\cS_t)
		}_{L^2(\mathcal A)}
		\leq C_f\sum_{k=0}^j(1+Kt)^{j-k}
		\Norm{\partial_R^k\partial_\varphi^\ell H}_{L^2(\mathcal A)}.
	\end{aligned}
	\]
	Here the first estimate assumes $\supp H\subset\mathcal I$.
	Since polar and Cartesian Sobolev norms are equivalent on the
	fixed annuli and $\inf_{\overline{\mathcal I}}\Omega'>0$,
	these estimates give
	\begin{align}
		\Norm{u_\nu[h]\circ\cS_t}_{H^m(\mathcal I)}
		&\leq C_f\Norm{H}_{H^m(\mathcal I)},
		\label{eq:pullback-components-H2-inner}\\
		\Norm{u_\nu[h]\circ\cS_t}_{H^m(\mathcal A)}
		&\leq C_f\sum_{j=0}^m(1+Kt)^{m-j}
		\Norm{H}_{H^j(\mathcal A)}.
		\label{eq:pullback-components-H2-full}
	\end{align}
	
	The bounded derivatives of $R\Omega'$ and the polar basis
	vectors, together with \eqref{eq:pullback-polar-identity},
	yield
	\[
	\Norm{\cB_t[H]}_{H^m(\mathcal D)}
	\leq C_f\lambda(1+Kt)
	\sum_{\nu\in\{r,\varphi\}}
	\Norm{u_\nu[h]\circ\cS_t}_{H^m(\mathcal D)},
	\qquad
	\mathcal D\in\{\mathcal A,\mathcal I\}.
	\]
	Combining this with \eqref{eq:pullback-components-H2-inner}
	proves \eqref{eq:pullback-velocity-H2-inner}.
	On $\mathcal A$, \eqref{eq:pullback-components-H2-full} gives
	\begin{equation}\label{eq:pullback-velocity-H2-tame}
		\Norm{\cB_t[H]}_{H^m(\mathcal A)}
		\leq C_f\lambda(1+Kt)
		\sum_{j=0}^m(1+Kt)^{m-j}
		\Norm{H}_{H^j(\mathcal A)}.
	\end{equation}
	Since $\Norm{H}_{H^j(\mathcal A)}
	\leq\Norm{H}_{H^m(\mathcal A)}$ for $j\leq m$,
	this implies \eqref{eq:pullback-velocity-H2}.
\end{proof}

\subsection{The exact solution in the adapted coordinates}
\label{sec:block}

We now apply the coordinates introduced in \cref{sec:pullback} to the
exact solution $\theta$ of \eqref{eq:SQG} with the initial datum defined
in \eqref{eq:initial-datum}.

For each $\ell\in\{\rad,\osc\}$, let \(\theta_\ell\) be the solution of
\begin{equation*}
	\left(
	\partial_t+u[\theta]\cdot\nabla_x
	\right)\theta_\ell=0,
	\quad
	\theta_\ell(\cdot,0)=\theta_{\ell,0}.
\end{equation*}
Thus $\theta=\theta_{\rad}+\theta_{\osc}$. Since $u[\theta]$ is
divergence-free, transport preserves each component's $L^p$ norm for
$1\leq p\leq\infty$. By \cref{lem:Nfold}, both components also remain
$2\pi/N$-periodic in $\varphi$.

Rescale by \(\lambda\):
\[
\Theta(y,t)
:=
\theta\left(\frac{y}{\lambda},t\right),
\quad
\Theta_\ell(y,t)
:=
\theta_\ell\left(\frac{y}{\lambda},t\right),
\quad
\ell\in\{\rad,\osc\}.
\]
Define the deviation of the rescaled radial component from its initial
profile by
\[
	\Theta_{\rad,\err}(y,t)
:=
	\Theta_{\rad}(y,t)-c_0\lambda^{1-s}f(R).
\]
Then
\[
\Theta
=
c_0\lambda^{1-s}f(R)+\Theta_{\osc}+	\Theta_{\rad,\err}.
\]
The rescaled SQG equation is
\[
\partial_t\Theta+\lambda u[\Theta]\cdot\nabla_y\Theta=0.
\]
Using
\[
\lambda u[c_0\lambda^{1-s}f]=Ku[f],
\qquad
u[f]\cdot\nabla f=0,
\]
we obtain
\begin{align}
	\left(
	\partial_t+Ku[f]\cdot\nabla
	+\lambda u[\Theta_{\osc}+	\Theta_{\rad,\err}]\cdot\nabla
	\right)\Theta_{\osc}
	&=0,
	\label{eq:q-fixed-scale}\\
	\left(
	\partial_t+Ku[f]\cdot\nabla
	+\lambda u[\Theta_{\osc}+	\Theta_{\rad,\err}]\cdot\nabla
	\right)	\Theta_{\rad,\err}
	&=
	-Ku[\Theta_{\osc}+	\Theta_{\rad,\err}]\cdot\nabla f(R).
	\label{eq:W-fixed-scale}
\end{align}

We now compose $\Theta_{\osc}$ and $\Theta_{\rad,\err}$ with the
background shear flow $\cS_t$ and set
\[
\widetilde\Theta_{\osc}(y,t)
:=
\Theta_{\osc}(\cS_t(y),t),
\quad
\widetilde\Theta_{\rad,\err}(y,t)
:=
\Theta_{\rad,\err}(\cS_t(y),t).
\]
It follows that
\[
\Theta_{\osc}
=
\widetilde\Theta_{\osc}\circ\cS_t^{-1},
\quad
\Theta_{\rad,\err}
=
\widetilde\Theta_{\rad,\err}\circ\cS_t^{-1}.
\]
 Since $\cS_t$ preserves the radial
coordinate, we have $f\circ\cS_t=f$.

For any time-dependent scalar function $h$, the chain rule gives
\begin{align*}
	\partial_t(h\circ\cS_t)
	&=
	\left(
	\partial_t h+Ku[f]\cdot\nabla h
	\right)\circ\cS_t,\\
	\nabla(h\circ\cS_t)
	&=
	(D\cS_t)^{\mathsf T}(\nabla h)\circ\cS_t.
\end{align*}
Differentiating $f\circ\cS_t=f$ gives
\[
(D\cS_t)^{-\mathsf T}\nabla f=(\nabla f)\circ\cS_t.
\]
Applying these identities to \eqref{eq:q-fixed-scale} and
\eqref{eq:W-fixed-scale}, and using the definition of $\cB_t$, yields
\begin{align}
	\partial_t\widetilde\Theta_{\osc}
	+V\cdot\nabla\widetilde\Theta_{\osc}
	&=0,
	\label{eq:Q-equation}\\
	\partial_t\widetilde\Theta_{\rad,\err}
	+V\cdot\nabla\widetilde\Theta_{\rad,\err}
	&=
	-c_0\lambda^{1-s}V\cdot\nabla f(R),
	\label{eq:Wtilde-equation}\\
	V
	&:=
	\cB_t[\widetilde\Theta_{\osc}]
	+\cB_t[\widetilde\Theta_{\rad,\err}].
	\label{eq:V}
\end{align}
The background transport $Ku[f]\cdot\nabla$ has therefore been
absorbed into the coordinates. The remaining transport field $V$ is
generated only by $\widetilde\Theta_{\osc}$ and $\widetilde\Theta_{\rad,\err}$ through
the operator $\cB_t$.

\subsection{Bootstrap closure on a fixed interval}

In this subsection, we close the $C^{1,\alpha}$ and support
bootstrap using \cref{prop:pullback-velocity,lem:radial-velocity-decay}.
Fix a smooth radial cutoff $\chi\in C_c^\infty(\mathcal I)$ such that
\[
\chi(y)=1
\quad\text{for } \frac34\leq |y| \leq\frac94.
\]
Fix $\delta>0$ and choose $\alpha$ such that
\begin{equation}\label{eq:alpha-choice}
	0<\alpha<\frac{\delta(s-1)}{1+2s-s^2+\delta},
\end{equation}
and set
\begin{align}
	\Gamma
	&:=K(1+KT)^{1+2\alpha},
	\notag\\
	\ell\label{eq:ell-def}
	&:=1+\log(2+N(1+KT)).
\end{align}

Since $\cS_0=\operatorname{Id}$, the definition of $\theta_{\osc,0}$
gives the following bound for some $A=A(g,\alpha)\geq1$,
independent of $c_0$, $\lambda$ and $N$:
\begin{align}\label{initial-osc}
	\Norm{\widetilde\Theta_{\osc}(0)}_{C^1}
\leq
A c_0\lambda^{1-s}N^{1-s},
\quad
\Norm{\widetilde\Theta_{\osc}(0)}_{C^{1,\alpha}}
\leq
A c_0\lambda^{1-s}N^{1+\alpha-s}.
\end{align}
Moreover,
\[
\begin{gathered}
\widetilde {\Theta}_{\rad,\err}(0)={\Theta}_{\rad,\err}(0)=0,\\
\supp\Theta_{\osc}(0)
\subset\left\{y\in\R^2:1<|y|<2\right\},
\quad
\supp\Theta_{\rad}(0)
\subset\left\{y\in\R^2:4<|y|<5\right\}.
\end{gathered}
\]

Let $T_{\max}>0$ denote the maximal lifespan of $\theta$.
Let $\kappa>0$ be a common exponential decay rate in
\cref{lem:radial-velocity-decay} for all the annuli used below.
 Define
\[
\tau_*:=
\sup\Bigl\{
\tau\in(0,\min\{T,T_{\max}\}):
\eqref{eq:osc-bootstrap}-\eqref{eq:support-bootstrap}
\text{ hold for every }t\in[0,\tau]
\Bigr\},
\]
where
\begin{align}
	&\Norm{\widetilde\Theta_{\osc}(t)}_{C^1}
	\leq
	2A c_0\lambda^{1-s}N^{1-s},
	&
	\Norm{\widetilde\Theta_{\osc}(t)}_{C^{1,\alpha}}
&	\leq
	2A c_0\lambda^{1-s}N^{1+\alpha-s},
	\label{eq:osc-bootstrap}\\
	&\Norm{\widetilde {\Theta}_{\rad,\err}}_{C^{1,\alpha}}
	\leq
	A c_0\lambda^{1-s}N^{-s}e^{-\kappa N/2}.
	\label{eq:Wtilde-bootstrap}\\
		&\supp\Theta_{\osc}(t)
	\subset
	\left\{y\in\R^2:\tfrac34<|y|<\tfrac94\right\},
	&
	\supp\Theta_{\rad}(t)
	&\subset
	\left\{y\in\R^2:\tfrac72<|y|<\tfrac{11}{2}\right\},
	\label{eq:support-bootstrap}
\end{align}
The initial bounds are strict at $t=0$. Continuity therefore gives a
short interval on which all bootstrap conditions hold, so $\tau_*>0$.

\subsubsection{Oscillatory component}

We first improve the $C^1$ and $C^{1,\alpha}$ bounds for $\widetilde\Theta_{\osc}$ in
\eqref{eq:osc-bootstrap}. 
\begin{proposition}
	\label{prop:Q-bound}
	Let \(\varepsilon>0\) be sufficiently small, depending only on \(f,g,\alpha\). Assume that
	\begin{equation}\label{eq:Q-smallness}
		T\Gamma N^{1+\alpha-s}\leq\varepsilon,
		\qquad (1+KT)e^{-\kappa N/2}\leq1.
	\end{equation}
	Then, for any $0\leq t<\tau_*$,
	\begin{equation}\label{eq:Q-improved}
		\Norm{\widetilde\Theta_{\osc}(t)}_{C^1}
		\leq\frac32A c_0\lambda^{1-s}N^{1-s},
		\quad
		\Norm{\widetilde\Theta_{\osc}(t)}_{C^{1,\alpha}}
		\leq\frac32A c_0\lambda^{1-s}N^{1+\alpha-s}.
	\end{equation}
\end{proposition}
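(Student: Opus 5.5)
We treat \eqref{eq:Q-equation} as a linear transport equation for $\widetilde\Theta_{\osc}$ driven by the divergence-free field $V$ of \eqref{eq:V}. By the support bootstrap \eqref{eq:support-bootstrap} and the fact that $\cS_t$ preserves radii, $\widetilde\Theta_{\osc}$ is supported in $\{3/4<|y|<9/4\}\subset\mathcal I$. Hence only $V$ on $\mathcal I$ (indeed on a neighborhood of the support) enters. The standard transport estimates along the flow $X$ of $V$ give
\[
\Norm{\widetilde\Theta_{\osc}(t)}_{C^1}\le \Norm{\widetilde\Theta_{\osc}(0)}_{C^1}\exp\Bigl(\int_0^t\Norm{\nabla V}_{L^\infty(\mathcal I)}\Bigr),
\]
and, using $\nabla(\widetilde\Theta_0\circ X^{-1})$ together with the Hölder bound on $DX^{-1}$,
\[
\Norm{\widetilde\Theta_{\osc}(t)}_{C^{1,\alpha}}\le C\Norm{\widetilde\Theta_{\osc}(0)}_{C^{1,\alpha}}\exp\Bigl(C\int_0^t\Norm{V}_{C^{1,\alpha}(\mathcal I)}\Bigr).
\]
It therefore suffices to show that $\int_0^t\Norm{V(\zeta)}_{C^{1,\alpha}(\mathcal I)}\dd\zeta$ is bounded by a small absolute constant $c_1$. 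We choose $c_1$ so that $e^{Cc_1}\le 3/2$. The constant in front of the exponential must be exactly $1$ in the $C^1$ bound, and in the $C^{1,\alpha}$ bound it must be absorbable. If needed, we redefine $A$ to include the transport constant, noting that \eqref{initial-osc} has slack.

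The oscillatory contribution is handled directly. Since $\supp\widetilde\Theta_{\osc}(\zeta)\subset\mathcal I$, we apply the inner bound \eqref{eq:pullback-velocity-bound-inner} of \cref{prop:pullback-velocity} together with the bootstrap \eqref{eq:osc-bootstrap}. This gives
\[
\int_0^t\Norm{\cB_\zeta[\widetilde\Theta_{\osc}]}_{C^{1,\alpha}(\mathcal I)}\le C_{f,\alpha}\lambda(1+KT)^{1+2\alpha}\,T\cdot 2Ac_0\lambda^{1-s}N^{1+\alpha-s}.
\]
Since $\lambda\cdot c_0\lambda^{1-s}=K$, this quantity equals $2AC_{f,\alpha}T\Gamma N^{1+\alpha-s}\le 2AC_{f,\alpha}\varepsilon$ by \eqref{eq:Q-smallness}, which is small once $\varepsilon$ is small depending on $f,g,\alpha$.

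The radial-error contribution $\cB_\zeta[\widetilde\Theta_{\rad,\err}]$ is the main obstacle. Here $\widetilde\Theta_{\rad,\err}$ lives near $\{7/2<|y|<11/2\}\subset\mathcal A$, not in $\mathcal I$, so only the cruder bound \eqref{eq:pullback-velocity-bound} is available, and the output is needed on $\mathcal I$. We use the polar identity \eqref{eq:pullback-polar-identity}. First, for $u_\varphi[h]\circ\cS_t$ and $u_r[h]\circ\cS_t$ observed on $\mathcal I$ with source $h$ supported in $\{7/2<|y|<11/2\}$, the kernel is smooth. Lemma~\ref{lem:far-field} bounds the far-field contributions by $\Norm{h}_\infty$, with no loss of derivatives of $H$. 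The $C^{1,\alpha}$ norm after composition with $\cS_t$ then costs at most $(1+Kt)^{1+\alpha}$ by Lemma~\ref{lem:composition}. Second, the dangerous term $KtR\Omega'(u_r[h]\circ\cS_t)$ contains $u_r$, and Lemma~\ref{lem:radial-velocity-decay} (using the $2\pi/N$ periodicity from \cref{lem:Nfold}) makes it exponentially small in $N$. Combined with \eqref{eq:Wtilde-bootstrap}, this yields
\[
\Norm{\cB_\zeta[\widetilde\Theta_{\rad,\err}]}_{C^{1,\alpha}(\mathcal I)}\lesssim \lambda(1+KT)^{2+\alpha}c_0\lambda^{1-s}N^{-s}e^{-\kappa N/2}.
\]
Integrating in time and using the second condition of \eqref{eq:Q-smallness}, the result is at most $C\,T\,K(1+KT)^{1+\alpha}N^{-s}$, which is dominated by $T\Gamma N^{1+\alpha-s}\le\varepsilon$. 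In fact, even the crude bound \eqref{eq:pullback-velocity-bound} suffices: it produces the factor $\lambda(1+KT)^{2+2\alpha}\cdot Ac_0\lambda^{1-s}N^{-s}e^{-\kappa N/2}$, and one power of $(1+KT)$ is absorbed by $e^{-\kappa N/2}$. We would use this simpler route, taking care that the output norm is on $\mathcal I\subset\mathcal A$.

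Summing the two contributions gives $\int_0^t\Norm{V}_{C^{1,\alpha}(\mathcal I)}\le C\varepsilon$. Inserting this into the transport estimates with \eqref{initial-osc}, and choosing $\varepsilon$ so that the exponential factors are at most $3/2$, yields \eqref{eq:Q-improved} for all $0\le t<\tau_*$.
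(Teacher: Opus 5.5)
Your proposal is correct and follows the paper's route. Like you, the paper replaces $V$ by $\chi V$ using $\chi\widetilde\Theta_{\osc}=\widetilde\Theta_{\osc}$, applies \eqref{eq:pullback-velocity-bound-inner} to the oscillatory part and the crude bound \eqref{eq:pullback-velocity-bound} to the radial error (absorbing one factor $1+KT$ via $(1+KT)e^{-\kappa N/2}\le 1$), and closes with the $C^1$ and $C^{1,\alpha}$ transport estimates and Gronwall. Your detour through the polar identity and \cref{lem:radial-velocity-decay} is therefore unneeded, and so is your worry about a prefactor: the paper's differential-inequality form gives prefactor $1$ after Gronwall, although enlarging $A$ as you suggest would also be legitimate.
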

\begin{proof}
By \eqref{eq:support-bootstrap} and the choice of $\chi$,
$\chi\widetilde\Theta_{\osc}=\widetilde\Theta_{\osc}$;
hence \eqref{eq:Q-equation} holds with $\chi V$ in place of $V$.
Applying \eqref{eq:pullback-velocity-bound-inner} and
\eqref{eq:pullback-velocity-bound} to the oscillatory and
radial error terms of \eqref{eq:V}, respectively, gives
\begin{align*}
	\Norm{\chi V(t)}_{C^{1,\alpha}}
	&\leq C\lambda(1+Kt)^{1+2\alpha}
	\Norm{\widetilde\Theta_{\osc}(t)}_{C^{1,\alpha}(\mathcal I)}
	+C\lambda(1+Kt)^{2+2\alpha}
	\Norm{\widetilde\Theta_{\rad,\err}(t)}_{C^{1,\alpha}(\mathcal A)}\\
	&\leq C\Gamma\bigl(
	N^{1+\alpha-s}+(1+KT)N^{-s}e^{-\kappa N/2}\bigr)\\
	&\leq C\Gamma N^{1+\alpha-s}=:B,
\end{align*}
where the last inequality uses
$N\geq C_*\log(2+KT)$ in \eqref{eq:Q-smallness} to absorb the
exponentially small term. The other condition in
\eqref{eq:Q-smallness} then gives
$
TB\leq C\varepsilon.
$
	The $C^1$ and $C^{1,\alpha}$ a priori estimates for the transport
	equation \eqref{eq:Q-equation} yield
	\begin{align*}
		\frac{\dd}{\dd t}
		\Norm{\widetilde\Theta_{\osc}(t)}_{C^1}
		&\leq
		C\Norm{\nabla(\chi V(t))}_\infty
		\Norm{\widetilde\Theta_{\osc}(t)}_{C^1},\\
		\frac{\dd}{\dd t}
		\Norm{\widetilde\Theta_{\osc}(t)}_{C^{1,\alpha}}
		&\leq
		C_\alpha\Norm{\chi V(t)}_{C^{1,\alpha}}
		\Norm{\widetilde\Theta_{\osc}(t)}_{C^{1,\alpha}}.
	\end{align*}
By \eqref{initial-osc} and Gronwall's inequality, we obtain
	\begin{align*}
		\Norm{\widetilde\Theta_{\osc}(t)}_{C^1}
		&\leq
		A c_0\lambda^{1-s}N^{1-s}\exp(CBt),\\
		\Norm{\widetilde\Theta_{\osc}(t)}_{C^{1,\alpha}}
		&\leq
		A c_0\lambda^{1-s}N^{1+\alpha-s}\exp(C_\alpha Bt).
	\end{align*}
	Choose $\varepsilon$ sufficiently small that
	\[
	\exp(C\varepsilon)\leq\frac32,
	\quad
	\exp(C_\alpha\varepsilon)\leq\frac32.
	\]
	This proves \eqref{eq:Q-improved}.
\end{proof}

\subsubsection{Radial deformation}

We now improve the estimate for $\widetilde\Theta_{\rad,\err}$ in \eqref{eq:Wtilde-bootstrap}. 

\begin{proposition}\label{prop:Wtilde-bound}
	Assume that \eqref{eq:Q-smallness} holds with $\varepsilon>0$ chosen
	as in \cref{prop:Q-bound}. Suppose in addition that
	\begin{equation}\label{eq:Gamma-small}
		KT\leq\varepsilon N,
		\qquad
		N\geq C_0\log(2+K(1+T)),
	\end{equation}
	where $C_0>0$ is sufficiently large and depends only on
	$f,g,\alpha$.
	Then, for any $0\leq t<\tau_*$,
	\begin{equation}\label{eq:Wtilde-improved}
		\Norm{\widetilde {\Theta}_{\rad,\err}}_{C^{1,\alpha}}
		\leq A
		c_0\lambda^{1-s}N^{-s}e^{-3\kappa N/4}.
	\end{equation}
\end{proposition}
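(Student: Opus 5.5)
We estimate $\widetilde\Theta_{\rad,\err}$ through the forced transport equation \eqref{eq:Wtilde-equation}. It starts from zero, so everything hinges on how small the forcing $-c_0\lambda^{1-s}V\cdot\nabla f(R)$ is.

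\textbf{Reducing the forcing to its radial part.} Since $\nabla f(R)=f'(R)\er$, only the radial component $V\cdot\er$ on $\supp f\subset\{4<R<5\}$ enters. By \eqref{eq:pullback-polar-identity}, this component equals $\lambda\,u_r[h]\circ\cS_t$, with no $Kt$ factor. We split $h$ into $\Theta_{\osc}$ and $\Theta_{\rad,\err}$ and treat the two pieces differently.

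\textbf{The oscillatory piece.} By the support bootstrap \eqref{eq:support-bootstrap}, $\Theta_{\osc}$ is supported in $\{3/4<R<9/4\}$. By \cref{lem:Nfold}, it is $2\pi/N$-periodic in angle. Hence \cref{lem:radial-velocity-decay} applies on a neighborhood annulus of $\{4\le R\le5\}$, which is separated from the support at log-distance at least $\log(16/9)$. It gives
\[
\Norm{u_r[\Theta_{\osc}]}_{C^{1,\alpha}(\{7/2<R<11/2\})}\lesssim (1+N)^{3}e^{-\kappa N}\Norm{\Theta_{\osc}}_\infty\lesssim N^3e^{-\kappa N}c_0\lambda^{1-s}N^{-s}.
\]
Here we use $\Norm{\Theta_{\osc}}_\infty=\Norm{\theta_{\osc,0}}_\infty$. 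Composing with $\cS_t$ costs a factor $(1+Kt)^{1+\alpha}$ by \cref{lem:composition}, applied after cutting off. Multiplying by the smooth coefficient $f'$ costs only constants. Composing with $\cS_t$ also does not affect the forcing's support.

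\textbf{The radial-error piece.} The contribution of $\Theta_{\rad,\err}$ is bounded by \eqref{eq:pullback-velocity-bound} together with \eqref{eq:Wtilde-bootstrap}. This term is linear in $\widetilde\Theta_{\rad,\err}$ with coefficient $C\lambda(1+Kt)^{2+2\alpha}c_0\lambda^{1-s}=C(K/\lambda^{0})(1+Kt)^{2+2\alpha}$. Since $K=c_0\lambda^{2-s}$, this coefficient is of size $K(1+KT)^{2+2\alpha}$. We therefore treat it as a Gronwall term rather than as forcing.

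\textbf{Transport estimate and closing.} The $C^{1,\alpha}$ estimate for the transport part, with velocity $V$ bounded on $\mathcal A$ as in \cref{prop:Q-bound} by $B'\lesssim \Gamma(1+KT)(N^{1+\alpha-s}+e^{-\kappa N/2})$, gives
\[
\frac{\dd}{\dd t}\Norm{\widetilde\Theta_{\rad,\err}}_{C^{1,\alpha}}\le C\bigl(B'+K(1+KT)^{2+2\alpha}\bigr)\Norm{\widetilde\Theta_{\rad,\err}}_{C^{1,\alpha}}+CK(1+KT)^{1+\alpha}N^3e^{-\kappa N}c_0\lambda^{1-s}N^{-s}.
\]
Gronwall then yields
\[
\Norm{\widetilde\Theta_{\rad,\err}(t)}_{C^{1,\alpha}}\le C\,KT(1+KT)^{1+\alpha}N^3e^{-\kappa N}\exp\bigl(CT K(1+KT)^{2+2\alpha}\bigr)\,c_0\lambda^{1-s}N^{-s}.
\]

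\textbf{Main obstacle.} The main obstacle is that the Gronwall exponent $TK(1+KT)^{2+2\alpha}$ is large. It is a power of $\lambda$, and it must be dominated by $\kappa N/4$. Here \eqref{eq:Gamma-small} enters. Using $KT\le\varepsilon N$ controls it only linearly, so I would instead exploit the structure of the term. First, I expect that the full-annulus cost on the forcing side should be avoided. The forcing involves only $u_r$, which is exponentially small when acting on the $N$-periodic $\Theta_{\rad,\err}$ itself, as long as its support stays near $\{7/2<R<11/2\}$ and is separated from $\supp f$. That separation fails, however, because the two supports overlap. So I would fall back on choosing the rate differently: bound the self-interaction by its $C^{1,\alpha}$ velocity contribution only through $V$ in the transport term, where \eqref{eq:Q-smallness} keeps $TB'\lesssim\varepsilon(1+KT)$. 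I would then absorb $\exp(C\varepsilon(1+KT))\le \exp(C\varepsilon^2 N)\le e^{\kappa N/8}$ using $KT\le\varepsilon N$.

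\textbf{Remaining factors.} Finally, the polynomial factors $KT(1+KT)^{1+\alpha}N^3\le CN^{5}$ are absorbed into $e^{\kappa N/8}$ via $N\ge C_0\log(2+K(1+T))$. This yields the factor $e^{-3\kappa N/4}$ with constant at most $A$, giving \eqref{eq:Wtilde-improved}.
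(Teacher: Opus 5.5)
\emph{There is a genuine gap}: the linear self-interaction term in the forcing is not closed.

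Your overall scheme matches the paper: the oscillatory forcing via \cref{lem:radial-velocity-decay}, the self-interaction treated as a Gronwall term, and the transport coefficient controlled by $\int_0^T\Norm{V}_{C^{1,\alpha}(\mathcal A)}\lesssim\varepsilon(1+\varepsilon N)$. The problem is the term $-c_0\lambda^{1-s}f'(R)\,(\cB_t[\widetilde\Theta_{\rad,\err}])_r$.
\begin{itemize}
\item You estimate it with \eqref{eq:pullback-velocity-bound}. This gives the Gronwall coefficient $CK(1+Kt)^{2+2\alpha}$, whose time integral is $TK(1+KT)^{2+2\alpha}=T\Gamma(1+KT)$.
\item The hypotheses give only $T\Gamma\le\varepsilon N^{s-1-\alpha}$ and $KT\le\varepsilon N$. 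So this integral is bounded only by $\varepsilon N^{s-1-\alpha}(1+\varepsilon N)$, which is not $o(N)$ because $s-\alpha>1$.
\item In the parameter regime of \cref{thm:fixed-block} the integral is of order $\lambda^{(2-s)(3+2\alpha)}$. This exceeds $N\simeq\lambda^{a_\delta}$, for instance when $s\in(5/3,2)$ is fixed and $\delta$ is small. The factor $\exp(CTK(1+KT)^{2+2\alpha})$ then destroys $e^{-\kappa N}$.
\end{itemize}
Your fallback does not repair this. The smallness of $\Norm{V}_{C^{1,\alpha}}$ controls only the transport term $V\cdot\nabla\widetilde\Theta_{\rad,\err}$. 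The source contains $\cB_t[\widetilde\Theta_{\rad,\err}]$ multiplied by $c_0\lambda^{1-s}$. Its coefficient is $c_0\lambda^{1-s}$ times the operator norm of $H\mapsto(\cB_t[H])_r$, i.e.\ of order $K$ times powers of $1+Kt$, unrelated to the smallness of $V$. Counting it ``only through $V$ in the transport term'' simply drops it.

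\emph{The fix} is the observation you made in your first paragraph but did not use for this piece. The forcing sees only the radial component, which by \eqref{eq:pullback-polar-identity} equals $\lambda\,u_r[h]\circ\cS_t$, with no tangential $KtR\Omega'$ term. Estimate it with \eqref{eq:polar-components-pullback-full} instead of \eqref{eq:pullback-velocity-bound}:
\[
\Norm{(\cB_t[\widetilde\Theta_{\rad,\err}])_r}_{C^{1,\alpha}(\mathcal A)}
\le C\lambda(1+Kt)^{2\alpha}\bigl(\Norm{\widetilde\Theta_{\rad,\err}}_{C^{1,\alpha}}+Kt\Norm{\widetilde\Theta_{\rad,\err}}_{C^{\alpha}}\bigr)
\le C\lambda(1+Kt)^{1+2\alpha}\Norm{\widetilde\Theta_{\rad,\err}}_{C^{1,\alpha}}.
\]
The remaining factor $Kt$ comes from the commutator $[\Omega'\partial_\varphi,u_r]$ on $\mathcal A$, where $\Omega'$ has no lower bound and the weighted cancellation is unavailable. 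This saves one power of $1+Kt$. The self-interaction coefficient becomes $CK(1+Kt)^{1+2\alpha}$, whose time integral is at most $T\Gamma\le\varepsilon N^{s-1-\alpha}=o(N)$ by the first condition in \eqref{eq:Q-smallness}, since $0<s-1-\alpha<1$. Gronwall then gives the exponent $-\kappa N+C\varepsilon(1+\varepsilon N)+C\varepsilon N^{s-1-\alpha}$. Your treatment of the oscillatory forcing and of the polynomial prefactors then closes the estimate for $\varepsilon$ small and $N$ large.
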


\begin{proof}
	Set
	\[
	Z(t):=\Norm{\widetilde\Theta_{\rad,\err}(t)}_{C^{1,\alpha}}.
	\]
	The bootstrap assumptions, \eqref{eq:V}, and
	\cref{prop:pullback-velocity} give
	\[
	\begin{aligned}
		\Norm{V(t)}_{C^{1,\alpha}(\mathcal A)}
		&\le C\lambda(1+Kt)^{2+2\alpha}
		\bigl(
		\Norm{\widetilde\Theta_{\osc}(t)}_{C^{1,\alpha}}
		+\Norm{\widetilde\Theta_{\rad,\err}(t)}_{C^{1,\alpha}}
		\bigr)\\
		&\le CK(1+Kt)^{2+2\alpha}
		\bigl(N^{1+\alpha-s}+N^{-s}e^{-\kappa N/2}\bigr).
	\end{aligned}
	\]
	Since \(N^{-s}e^{-\kappa N/2}\le N^{1+\alpha-s}\),
	\eqref{eq:Q-smallness} and \(KT\le\varepsilon N\) imply
	\[
	\int_0^t\Norm{V(\xi)}_{C^{1,\alpha}(\mathcal A)}\,\dd\xi
	\le C(1+KT)T\Gamma N^{1+\alpha-s}
	\le C\varepsilon(1+\varepsilon N).
	\]
	
	We estimate the radial velocity in the source of
	\eqref{eq:Wtilde-equation} separately. The support bootstrap
	separates \(\Theta_{\osc}(t)\) from \(\{3<R<6\}\), while
	\cref{lem:Nfold} preserves its \(N\)-fold symmetry. Conservation
	of its \(L^\infty\) norm and
	\cref{lem:radial-velocity-decay} yield
	\[
	\begin{aligned}
		\Norm{u_r[\Theta_{\osc}(t)]}_{C^{1,\alpha}(\{3<R<6\})}
		&\le C(1+N)^3e^{-\kappa N}
		\Norm{\Theta_{\osc}(t)}_\infty\\
		&\le Cc_0\lambda^{1-s}N^{-s}
		(1+N)^3e^{-\kappa N}.
	\end{aligned}
	\]
	The radial identity in \eqref{eq:pullback-polar-identity}
	and composition with \(\cS_t\) therefore give
	\[
	\Norm{(\cB_t[\widetilde\Theta_{\osc}])_r}
	_{C^{1,\alpha}(\{3<R<6\})}
	\le CK(1+Kt)^{1+\alpha}N^{-s}
	(1+N)^3e^{-\kappa N}.
	\]
	
	For the radial error, the same identity and
	\eqref{eq:polar-components-pullback-full} give
	\[
	\begin{aligned}
		\Norm{(\cB_t[\widetilde\Theta_{\rad,\err}])_r}
		_{C^{1,\alpha}(\mathcal A)}
		&\le C\lambda(1+Kt)^{2\alpha}
		\bigl(
		\Norm{\widetilde\Theta_{\rad,\err}}_{C^{1,\alpha}(\mathcal A)}
		+Kt\Norm{\widetilde\Theta_{\rad,\err}}_{C^\alpha(\mathcal A)}
		\bigr)\\
		&\le C\lambda(1+Kt)^{1+2\alpha}Z(t).
	\end{aligned}
	\]
	The radial identity is essential here because the tangential
	$KtR\Omega'$ term in \eqref{eq:pullback-polar-identity} does not
	enter the estimate.
	
	Since \(\nabla f(R)=f'(R)\er\), the source term is
	\[
	-c_0\lambda^{1-s}f'(R)
	\bigl(
	(\cB_t[\widetilde\Theta_{\osc}])_r
	+(\cB_t[\widetilde\Theta_{\rad,\err}])_r
	\bigr).
	\]
	The fixed support of \(f'\) gives
	\[
	\Norm{f'V_r}_{C^{1,\alpha}}
	\le C_f\left(
	\Norm{(\cB_t[\widetilde\Theta_{\osc}])_r}
	_{C^{1,\alpha}(\{3<R<6\})}
	+\Norm{(\cB_t[\widetilde\Theta_{\rad,\err}])_r}
	_{C^{1,\alpha}(\mathcal A)}
	\right).
	\]
	Moreover, \eqref{eq:support-bootstrap} keeps
	\(\widetilde\Theta_{\rad,\err}\) in a fixed compact
	subannulus of \(\mathcal A\). A fixed cutoff there reduces
	the transport estimate to the norm of \(V\) on \(\mathcal A\).
	Consequently, for every \(t<\tau_*\),
	\[
	\begin{aligned}
		Z'(t)
		&\le C\Norm{V(t)}_{C^{1,\alpha}(\mathcal A)}Z(t)
		+Cc_0\lambda^{1-s}\Norm{f'V_r}_{C^{1,\alpha}}\\
		&\le C\bigl(
		\Norm{V(t)}_{C^{1,\alpha}(\mathcal A)}
		+K(1+Kt)^{1+2\alpha}\bigr)Z(t)\\
		&\quad+Cc_0\lambda^{1-s}K(1+Kt)^{1+\alpha}
		N^{-s}(1+N)^3e^{-\kappa N}
	\end{aligned}
	\]
	with $Z(0)=0.$
	The first condition in \eqref{eq:Q-smallness} also gives
	\[
	T\Gamma\le\varepsilon N^{s-1-\alpha}=o(N),
	\qquad 0<s-1-\alpha<1.
	\]
	Gronwall's inequality now yields
	\[
	Z(t)\le Cc_0\lambda^{1-s}N^{-s}
	TK(1+KT)^{1+\alpha}(1+N)^3
	\exp\!\left(
	-\kappa N+C\varepsilon(1+\varepsilon N)
	+C\varepsilon N^{s-1-\alpha}
	\right).
	\]
Choosing \(\varepsilon\)
	sufficiently small and then \(N\) sufficiently large gives
	\eqref{eq:Wtilde-improved}.
\end{proof}

\subsubsection{Support separation}

We now improve the support inclusions in
\eqref{eq:support-bootstrap} under
\eqref{eq:osc-bootstrap}--\eqref{eq:support-bootstrap}. 

\begin{proposition}
	\label{prop:support}
Assume the hypotheses of
\cref{prop:Q-bound,prop:Wtilde-bound}. Suppose in addition that
	\begin{equation}\label{eq:support-smallness}
		T\left(KN^{-s}\ell+Ke^{-\kappa N/2}\right)
		\leq\varepsilon.
	\end{equation}
	Then, for any $0\leq t<\tau_*$,
	\begin{equation}\label{eq:support-improved}
		\supp\Theta_{\osc}(t)
		\subset
		\left\{\tfrac78<|y|<\tfrac{17}{8}\right\},
		\quad
		\supp\Theta_{\rad}(t)
		\subset
		\left\{\tfrac{31}{8}<|y|<\tfrac{41}{8}\right\}.
	\end{equation}
\end{proposition}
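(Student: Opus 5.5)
The plan is to track the supports through the Lagrangian flow of the exact rescaled velocity $\lambda u[\Theta]$, written in the original (unconjugated) rescaled variable $y$. Since $\Theta_{\osc}$ and $\Theta_{\rad}$ are both transported by this field, a point starting at radius $R_0$ moves radially only through the radial component $\lambda u_r[\Theta]$. The background $Ku[f]$ is purely angular by \cref{lem:radial}, so it never changes $|y|$. Along a trajectory $Y(t)$ we therefore have
\[
\Bigl|\tfrac{\dd}{\dd t}|Y(t)|\Bigr|
\le\lambda\bigl|u_r[\Theta_{\osc}+\Theta_{\rad,\err}](Y(t),t)\bigr|.
\]
It then suffices to show that $\int_0^t\lambda\Norm{u_r[\cdot]}_{L^\infty(\mathcal D)}\,\dd\zeta\le C\varepsilon\le1/8$ on the relevant annuli $\mathcal D$ for $t<\tau_*$. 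Starting from the initial supports $\{1<R<2\}$ and $\{4<R<5\}$, this gives the improved inclusions \eqref{eq:support-improved}.

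For the oscillatory support, the trajectories stay in $\{3/4<R<9/4\}$ by \eqref{eq:support-bootstrap}. There I bound the full velocity $\lambda u[\Theta_{\osc}]$ with \cref{lem:log-CZ} at unit scale. The bootstrap \eqref{eq:osc-bootstrap} and \cref{lem:composition} give $\Norm{\Theta_{\osc}}_\infty=\Norm{\theta_{\osc,0}}_\infty\lesssim c_0\lambda^{1-s}N^{-s}$ and $\Norm{\nabla\Theta_{\osc}}_\infty\lesssim(1+Kt)\,c_0\lambda^{1-s}N^{1-s}$. The ratio is therefore at most $CN(1+KT)$, and the logarithm is bounded by $C\ell$ from \eqref{eq:ell-def}. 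Since $\lambda c_0\lambda^{1-s}=K$, this yields $\lambda\Norm{u[\Theta_{\osc}]}_\infty\le CKN^{-s}\ell$. The radial error is handled directly: converting \eqref{eq:Wtilde-bootstrap} back through $\cS_t^{-1}$ with \cref{lem:composition}, and using \cref{lem:log-CZ} or a crude $C^\alpha$ bound, gives $\lambda\Norm{u[\Theta_{\rad,\err}]}_\infty\le CK(1+KT)^{2}N^{-s}e^{-\kappa N/2}\le CKe^{-\kappa N/2}$, after absorbing the polynomial factor with \eqref{eq:Q-smallness}. Integrating over $[0,T]$ and invoking \eqref{eq:support-smallness} makes the total radial displacement at most $C\varepsilon$.

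For the radial support, the trajectories stay in $\{7/2<R<11/2\}$, which is separated from the oscillatory support $\{3/4<R<9/4\}$ in the logarithmic sense of \cref{lem:radial-velocity-decay}. By \cref{lem:Nfold} the field $\Theta_{\osc}$ is $2\pi/N$-periodic, so that lemma gives
\[
\lambda\Norm{u_r[\Theta_{\osc}]}_{L^\infty(\{7/2<R<11/2\})}\le C\lambda(1+N)^2e^{-\kappa N}c_0\lambda^{1-s}N^{-s}\le CKe^{-\kappa N/2}.
\]
The $\Theta_{\rad,\err}$ contribution is bounded exactly as before. Again \eqref{eq:support-smallness} controls the integral by $\varepsilon$.

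The main obstacle is bounding the self-induced velocity of $\Theta_{\osc}$ on its own support. A plain $C^\alpha$ or Lipschitz bound would cost a factor $N(1+Kt)$, and this is why the logarithmic estimate \cref{lem:log-CZ} is essential. One also has to check carefully that the gradient growth caused by the shear enters only through $\ell$. Finally, choosing $\varepsilon$ small so that $C\varepsilon<1/8$ leaves a margin strictly inside the bootstrap annuli, which gives \eqref{eq:support-improved}.
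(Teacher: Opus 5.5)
Your proposal is correct and takes essentially the same route as the paper. You follow the characteristics of $\lambda u[\Theta]$ and note that only $u_r[\Theta_{\osc}+\Theta_{\rad,\err}]$ changes $|Y|$. \cref{lem:log-CZ} gives $\lambda\Norm{u[\Theta_{\osc}]}_\infty\le CKN^{-s}\ell$, the radial error contributes $O(Ke^{-\kappa N/2})$, and \eqref{eq:support-smallness} then bounds the integrated radial displacement.

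Two small differences from the paper. First, the paper uses the global log-CZ bound for both supports, so your appeal to \cref{lem:radial-velocity-decay} on the outer annulus is valid but unnecessary. Second, the paper gets the pointwise bound $CKe^{-\kappa N/2}$ for the radial error from \eqref{eq:Wtilde-improved}, with factor $(1+Kt)^{1+\alpha}$ and $N\geq C_0\log(2+K(1+T))$. Your $(1+KT)^2$ bound with the bootstrap exponent does not give this pointwise, as you claim. It is still harmless: combining both conditions in \eqref{eq:Q-smallness} bounds its time integral by $\varepsilon$, which is all you need.
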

\begin{proof}
For \(y_0\in\R^2\), let \(Y(t;y_0)\) denote the characteristic curve defined by
	\[
	\partial_tY(t;y_0)
	=
	\lambda u[\Theta](Y(t;y_0),t),
	\quad
	Y(0;y_0)=y_0.
	\]
Since the flow transports each component, we have
	\[
	Y\bigl(t;\supp\Theta_\ell(0)\bigr)
	=
	\supp\Theta_\ell(t),
	\quad
	\ell\in\{\rad,\osc\}.
	\]
	Set $\rho(t;y_0):=|Y(t;y_0)|$. Since $f$ is radial, the radial component of \(u[c_0\lambda^{1-s}f]\) vanishes, so \(u_r[\Theta_{\rad}]=u_r[	\Theta_{\rad,\err}]\). Thus
	\begin{equation}\label{eq:radial-trajectory}
		\partial_t\rho(t;y_0)
		=
		\lambda
		\bigl(
		u_r[\Theta_{\osc}]+u_r[	\Theta_{\rad,\err}]
		\bigr)(Y(t;y_0),t).
	\end{equation}
	
	Exact transport and $g\not\equiv0$ give
	\[
	\|\Theta_{\osc}(t)\|_\infty
	=
	\|\Theta_{\osc}(0)\|_\infty
	\simeq_g c_0\lambda^{1-s}N^{-s}.
	\]
	From \eqref{eq:Q-improved} and \eqref{eq:background-shear-flow}, we also have
	\[
	\|\nabla\Theta_{\osc}(t)\|_\infty\le Cc_0\lambda^{1-s}N^{1-s}(1+Kt).
	\]
	Together with the support bound \eqref{eq:support-bootstrap}, \cref{lem:log-CZ} gives
	\[
	\lambda\Norm{u[\Theta_{\osc}(t)]}_\infty
	\leq
	CKN^{-s}
	\left[1+\log\bigl(2+N(1+Kt)\bigr)\right]
	\leq
	CKN^{-s}\ell.
	\]
	Similarly, \eqref{eq:Wtilde-improved}, \cref{lem:composition}, and the boundedness of the Riesz transforms on \(C^{1,\alpha}\) yield
	\[
	\lambda\Norm{u[{\Theta}_{\rad,\err}(t)]}_\infty
	\leq
	CKN^{-s}(1+Kt)^{1+\alpha}e^{-3\kappa N/4}
	\leq
	CKe^{-\kappa N/2},
	\]
where the last step uses
\(N\geq C_0\log(2+K(1+T))\).

Integrating \eqref{eq:radial-trajectory} yields, for \(0\leq t<\tau_*\),
	\[
	\left|\rho(t;y_0)-\rho(0;y_0)\right|
	\leq
	Ct\left(
	KN^{-s}\ell+Ke^{-\kappa N/2}
	\right),
	\quad
	y_0\in
	\supp\Theta_{\osc}(0)\cup\supp\Theta_{\rad}(0),
	\]
By \eqref{eq:support-smallness}, this displacement is less than \(1/8\) (after taking \(\varepsilon\) smaller if necessary). Since
\[
\operatorname{supp}\Theta_{\osc}(0)\subset\{1<|y|<2\},\quad
\operatorname{supp}\Theta_{\rad}(0)\subset\{4<|y|<5\},
\]
the improved support inclusions in \eqref{eq:support-improved} follow.
\end{proof}

\subsubsection{Continuation}
The bounds \eqref{eq:Q-improved}, \eqref{eq:Wtilde-improved}, and
\eqref{eq:support-improved} strictly improve the bootstrap assumptions
under the hypotheses of
\cref{prop:Q-bound,prop:Wtilde-bound,prop:support}. We verify these
conditions in \cref{thm:fixed-block} and then apply the continuation
criterion to obtain a smooth solution on $[0,T]$.

\begin{theorem}[Closure of the bootstrap]
	\label{thm:fixed-block}
	Let \(1<s<2\), \(T>0\), \(c_0>0\), and \(\delta>0\).
	Choose \(\alpha\) according to \eqref{eq:alpha-choice} and set
	\[
	a_\delta:=\frac{2(2-s)+\delta}{s-1}.
	\]
	Choose \(\varepsilon>0\) sufficiently small for
	\cref{prop:Q-bound,prop:Wtilde-bound,prop:support}. With \(K\) and
	\(N\) as in \eqref{eq:K-definition} and
	\eqref{eq:idea-parameter-choice}, namely,
	\[
	K:=c_0\lambda^{2-s},\quad N:=\lfloor\lambda^{a_\delta}\rfloor,
	\]
	for all sufficiently large \(\lambda\), the following inequalities hold:
	\begin{align}
		TK(1+KT)^{1+2\alpha}N^{1+\alpha-s}
		&\leq\varepsilon,
		\label{eq:block-cond1}\\
		TK
		&\leq\varepsilon N,
		\label{eq:block-cond2}\\
		T\left(
		KN^{-s}\ell+Ke^{-\kappa N/2}
		\right)
		&\leq\varepsilon.
		\label{eq:block-cond3}
	\end{align}
For every such $\lambda$, the solution with initial datum
\eqref{eq:initial-datum} extends smoothly to $[0,T]$, and the estimates
\eqref{eq:Q-improved}, \eqref{eq:Wtilde-improved} and
\eqref{eq:support-improved} hold
throughout this interval.
\end{theorem}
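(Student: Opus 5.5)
The proof has two parts. First we verify the three parameter inequalities \eqref{eq:block-cond1}--\eqref{eq:block-cond3} by exponent counting in $\lambda$. Then we run the continuity argument on $\tau_*$, using \cref{prop:Q-bound,prop:Wtilde-bound,prop:support} and the $C^{1,\alpha}$ continuation criterion.

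\emph{Parameter inequalities.} With $K=c_0\lambda^{2-s}$ and $N\simeq\lambda^{a_\delta}$, the left side of \eqref{eq:block-cond1} is, for fixed $T,c_0$,
\[
\lesssim \lambda^{(2-s)(2+2\alpha)+a_\delta(1+\alpha-s)}.
\]
At $\alpha=0$ the exponent equals $2(2-s)-(2(2-s)+\delta)=-\delta$. For general $\alpha$ it becomes
\[
-\delta+\alpha\bigl(2(2-s)+a_\delta\bigr)=-\delta+\alpha\,\frac{1+2s-s^2+\delta}{s-1}\cdot\frac{\,\cdot\,}{}\!,
\]
more precisely $2(2-s)+a_\delta=\frac{2(2-s)(s-1)+2(2-s)+\delta}{s-1}=\frac{2s(2-s)+\delta}{s-1}$. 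This is at most $\frac{1+2s-s^2+\delta}{s-1}$, since $2s(2-s)=4s-2s^2\le 1+2s-s^2$ is equivalent to $(s-1)^2\ge0$. Hence \eqref{eq:alpha-choice} makes the exponent negative, and \eqref{eq:block-cond1} holds for large $\lambda$.

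For \eqref{eq:block-cond2}, note that $a_\delta>2(2-s)/(s-1)\cdot 1$. This gives $a_\delta>2-s$ whenever $2/(s-1)>1$, which holds since $s<2$. Thus $TK/N\to0$.

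For \eqref{eq:block-cond3}, the quantity $KN^{-s}\ell$ is $\lambda^{2-s-sa_\delta}\log\lambda\to0$, because $sa_\delta>2-s$. The term $Ke^{-\kappa N/2}$ is super-polynomially small. The auxiliary conditions are also easy. The condition $(1+KT)e^{-\kappa N/2}\le1$ and $N\ge C_0\log(2+K(1+T))$ hold because $N$ is a power of $\lambda$ while $K$ is polynomial. Finally, $N\ge2$ holds for large $\lambda$.

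\emph{Bootstrap closure.} With the hypotheses verified, \cref{prop:Q-bound,prop:Wtilde-bound,prop:support} give strictly improved bounds on $[0,\tau_*)$. These are the factor $\tfrac32$ in place of $2$, $e^{-3\kappa N/4}$ in place of $e^{-\kappa N/2}$, and the support annuli shrunk by $1/8$. We then argue by contradiction. Suppose $\tau_*<\min\{T,T_{\max}\}$. The solution is smooth on a neighborhood of $\tau_*$, and each norm is continuous in $t$. The norms here are the $C^{1,\alpha}$ norms of $\widetilde\Theta_{\osc}=\Theta_{\osc}\circ\cS_t$ and $\widetilde\Theta_{\rad,\err}$, and the supports are controlled via characteristics. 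Continuity lets the improved bounds persist slightly beyond $\tau_*$, which contradicts maximality. Hence $\tau_*=\min\{T,T_{\max}\}$.

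\emph{Continuation and main obstacle.} It remains to rule out $T_{\max}\le T$. On $[0,T_{\max})$ we convert back to Cartesian variables. \cref{lem:composition} gives $\|\Theta_{\osc}\|_{C^{1,\alpha}}\lesssim(1+KT)^{1+\alpha}\|\widetilde\Theta_{\osc}\|_{C^{1,\alpha}}$, and likewise for the radial error. The profile $c_0\lambda^{1-s}f$ is fixed. So $\sup_{t<T_{\max}}\|\theta(t)\|_{C^{1,\alpha}}<\infty$, the bound depending on $\lambda$ through fixed powers, which is all that is needed. By the $C^{1,\alpha}\cap L^q$ continuation criterion (Wu), the solution extends beyond $T_{\max}$. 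This contradiction gives $T_{\max}>T$, and the estimates hold on all of $[0,T]$ by continuity up to $t=T$.

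The main obstacle is the arithmetic in \eqref{eq:block-cond1}, checking that the $\alpha$-loss is dominated by $\delta$. Beyond that, the only delicate point is the technical justification of continuity of $\tau\mapsto\|\widetilde\Theta(\tau)\|_{C^{1,\alpha}}$ for the bootstrap. This I would handle by working with slightly larger $\alpha'$ regularity of the smooth solution, or by using upper semicontinuity of the norms, which suffices for closing.
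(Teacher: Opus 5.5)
Your proposal is correct and follows the paper's proof essentially step for step. Your exponent $-\delta+\alpha\,\frac{2s(2-s)+\delta}{s-1}$ for \eqref{eq:block-cond1} is exactly the paper's $(2-s)(2+2\alpha)-a_\delta(s-1-\alpha)$, i.e.\ \eqref{eq:adelta-bootstrap}, and the other two conditions follow from $a_\delta>2-s$ as in the paper; the $\tau_*$ continuity argument, undoing $\cS_t$ via \cref{lem:composition}, and Wu's $C^{1,\alpha}\cap L^2$ criterion with $\Norm{\theta(t)}_2=\Norm{\theta_0}_2$ likewise match the paper's closure (just delete the garbled first display of the general-$\alpha$ exponent, since the ``more precisely'' line after it is the correct computation).
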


\begin{proof}
First, from \eqref{eq:alpha-choice}, we have
	\begin{equation}\label{eq:adelta-bootstrap}
		a_\delta
		>
		\frac{(2-s)(2+2\alpha)}{s-1-\alpha}.
	\end{equation}

With \(K=c_0\lambda^{2-s}\) and
\(N=\lfloor\lambda^{a_\delta}\rfloor\), for fixed \(c_0\) and \(T\),
\eqref{eq:ell-def} gives
\(\ell\lesssim_{c_0,T}1+\log\lambda\) for large $\lambda$.
Substitution into the left-hand sides of
\eqref{eq:block-cond1}--\eqref{eq:block-cond3} yields
	\begin{equation}\label{eq:block-parameter-bounds}
		\begin{aligned}
			TK(1+KT)^{1+2\alpha}N^{1+\alpha-s}
			&\lesssim_{c_0,T}
			\lambda^{(2-s)(2+2\alpha)-a_\delta(s-1-\alpha)},\\
			\frac{TK}{N}
			&\lesssim_{c_0,T}
			\lambda^{2-s-a_\delta},\\
			TKN^{-s}\ell
			&\lesssim_{c_0,T}
			\lambda^{2-s-a_\delta s}\log\lambda,\\
			TKe^{-\kappa N/2}
			&\lesssim_{c_0,T}
			\lambda^{2-s}e^{-c\lambda^{a_\delta}}.
		\end{aligned}
	\end{equation}
	Since \(0<s-1-\alpha<1\) and
	\(s(2+2\alpha)>1\), \eqref{eq:adelta-bootstrap} implies
	\[
	(2-s)(2+2\alpha)-a_\delta(s-1-\alpha)<0,
	\quad
	2-s-a_\delta<0,
	\quad
	2-s-a_\delta s<0.
	\]
Therefore, all right-hand sides in \eqref{eq:block-parameter-bounds}
tend to zero as $\lambda\to\infty$. For sufficiently large $\lambda$,
conditions \eqref{eq:block-cond1}--\eqref{eq:block-cond3} hold.
The logarithmic conditions in \eqref{eq:Q-smallness} and
\eqref{eq:Gamma-small} also hold, since
$N\simeq\lambda^{a_\delta}$ with $a_\delta>0$. Hence
\cref{prop:Q-bound,prop:Wtilde-bound,prop:support} hold on
$[0,\tau_*)$.

The preceding three propositions give strict improvements of
\eqref{eq:osc-bootstrap}--\eqref{eq:support-bootstrap}
on $[0,\tau_*)$. If $\tau_*<\min\{T,T_{\max}\}$, the exact
solution exists beyond $\tau_*$. 
The strict bounds and continuity then ensure that all bootstrap assumptions hold
on $[0,\tau_*+h]$ for some $h>0$, contradicting the definition
of $\tau_*$. Hence
\[
\tau_*=\min\{T,T_{\max}\}.
\]	

The stationary radial part satisfies, for $\lambda\geq1$,
\begin{equation*}
	\Norm{\overline\theta_{\rad}}_{C_x^{1,\alpha}}
	=
	\Norm{\frac{K}{\lambda}f(\lambda\cdot)}_{C_x^{1,\alpha}}
	\leq C_{f,\alpha}K\lambda^\alpha.
\end{equation*}
The other two components in the $y$ variable are
$\widetilde\Theta_{\osc}\circ\cS_t^{-1}$ and
$\widetilde\Theta_{\rad,\err}\circ\cS_t^{-1}$.
Returning to $x$ and using \cref{lem:composition},
\eqref{eq:Q-improved} and \eqref{eq:Wtilde-improved}, we obtain
\begin{equation*}
	\begin{aligned}
		\Norm{\theta(t)}_{C_x^{1,\alpha}}
		&\leq \Norm{\overline\theta_{\rad}}_{C_x^{1,\alpha}}
		+C_{f,\alpha}\lambda^{1+\alpha}(1+Kt)^{1+\alpha}
		\Bigl(
		\Norm{\widetilde\Theta_{\osc}(t)}_{C_y^{1,\alpha}}
		+\Norm{\widetilde\Theta_{\rad,\err}(t)}_{C_y^{1,\alpha}}
		\Bigr)\\
		&\leq C_{f,g,\alpha}K\lambda^\alpha
		\Bigl[1+(1+Kt)^{1+\alpha}
		\bigl(N^{1+\alpha-s}+N^{-s}e^{-3\kappa N/4}\bigr)\Bigr].
	\end{aligned}
\end{equation*}
This bound is uniform for $0\leq t<\min\{T,T_{\max}\}$.
Moreover, $\Norm{\theta(t)}_2=\Norm{\theta_0}_2$ by the
divergence-free transport equation. The continuation criterion in
the local $C^{1,\alpha}\cap L^2$ theory \cite{WuHolder} therefore
rules out $T_{\max}\leq T$. Hence
$T_{\max}>T$, and so $\tau_*=T$. This completes the proof.
\end{proof}

The bootstrap gives smooth existence and support separation on
$[0,T]$. We next estimate the $L^2$ difference between the exact and
approximate solutions, which will transfer the lower bound to the
exact solution.

\subsection{Error estimates in \texorpdfstring{$L^2$}{L2}}

We estimate the radial and oscillatory errors separately. Define
\[
\theta_{\rad,\mathrm{err}}
:=
\theta_{\rad}-\overline\theta_{\rad},
\quad
\theta_{\osc,\mathrm{err}}
:=
\theta_{\osc}-\overline\theta_{\osc}.
\]
Both the exact and approximate component equations preserve $L^p$
norms. In particular,
\begin{equation}\label{eq:component-L2-exact}
	\Norm{\theta_{\rad}(t)}_2
	=
	\Norm{\overline\theta_{\rad}(t)}_2
	=
	\Norm{\theta_{\rad,0}}_2,
	\quad
	\Norm{\theta_{\osc}(t)}_2
	=
	\Norm{\overline\theta_{\osc}(t)}_2
	=
	\Norm{\theta_{\osc,0}}_2.
\end{equation}

Returning to the variable \(x\),
we combine \eqref{eq:explicit-approximate} with \eqref{eq:support-improved} to obtain
\begin{align*}
	\supp\overline\theta_{\osc}(t)
	&\subset
	\left\{
	\lambda^{-1}<|x|<2\lambda^{-1}
	\right\},
	&
	\supp\theta_{\osc}(t)
	&\subset
	\left\{
	\tfrac78\lambda^{-1}<|x|<
	\tfrac{17}{8}\lambda^{-1}
	\right\},\\
	\supp\overline\theta_{\rad}(t)
	&\subset
	\left\{
	4\lambda^{-1}<|x|<5\lambda^{-1}
	\right\},
	&
	\supp\theta_{\rad}(t)
	&\subset
	\left\{
	\tfrac{31}{8}\lambda^{-1}<|x|<
	\tfrac{41}{8}\lambda^{-1}
	\right\}.
\end{align*}
Consequently,
\[
\supp\theta_{\osc,\mathrm{err}}(t)
\subset
\left\{
\tfrac78\lambda^{-1}<|x|<
\tfrac{17}{8}\lambda^{-1}
\right\},
\quad
\supp\theta_{\rad,\mathrm{err}}(t)
\subset
\left\{
\tfrac{31}{8}\lambda^{-1}<|x|<
\tfrac{41}{8}\lambda^{-1}
\right\}.
\]
Since $\frac{17}{8}<\frac{31}{8}$, the two error supports are disjoint. Hence
\begin{equation}\label{eq:exact-approx-L2}
	\Norm{\theta(t)-\overline\theta(t)}_2^2
	=
	\Norm{\theta_{\rad,\mathrm{err}}(t)}_2^2
	+
	\Norm{\theta_{\osc,\mathrm{err}}(t)}_2^2.
\end{equation}

\begin{lemma}[Radial error]
	\label{lem:W-L2}
	Under the hypotheses of \cref{thm:fixed-block},
for every $0\leq t\leq T$,
	\begin{equation}\label{eq:W-L2}
		\Norm{\theta_{\rad,\mathrm{err}}(t)}_2
		\leq
		C\lambda^{-s}N^{-s}e^{-\kappa N/2+CKt}.
	\end{equation}
\end{lemma}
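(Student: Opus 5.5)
The plan is to obtain \eqref{eq:W-L2} from an $L^2$ energy estimate in the original variable $x$ for the equation satisfied by $\theta_{\rad,\mathrm{err}}$. Since $\overline\theta_{\rad}=\theta_{\rad,0}$ is stationary and $u[\theta_{\rad,0}]\cdot\nabla\theta_{\rad,0}=0$, subtracting the two radial equations gives
\[
\partial_t\theta_{\rad,\mathrm{err}}+u[\theta]\cdot\nabla\theta_{\rad,\mathrm{err}}
=-u[\theta-\theta_{\rad,0}]\cdot\nabla\theta_{\rad,0},
\qquad \theta_{\rad,\mathrm{err}}(0)=0 .
\]
Because $\nabla\theta_{\rad,0}=c_0\lambda^{2-s}f'(\lambda r)\er$, only the radial velocity enters the source:
\[
-K f'(\lambda r)\bigl(u_r[\theta_{\osc}]+u_r[\theta_{\rad,\mathrm{err}}]\bigr),
\qquad K=c_0\lambda^{2-s}.
\]
Multiply by $\theta_{\rad,\mathrm{err}}$ and integrate. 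The transport term vanishes since $u[\theta]$ is divergence-free. This leaves
\[
\frac{\dd}{\dd t}\Norm{\theta_{\rad,\mathrm{err}}}_2
\le K\Norm{f'}_\infty\Bigl(\Norm{u_r[\theta_{\osc}]}_{L^2(E_\lambda)}+\Norm{u[\theta_{\rad,\mathrm{err}}]}_2\Bigr),
\]
where $E_\lambda=\{4\lambda^{-1}<|x|<5\lambda^{-1}\}\supset\supp f'(\lambda\cdot)$.

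The two terms on the right are handled separately.
\begin{enumerate}
\item[(a)] \emph{Self-interaction.} By $L^2$ boundedness of the Riesz transforms, $\Norm{u[\theta_{\rad,\mathrm{err}}]}_2\le C\Norm{\theta_{\rad,\mathrm{err}}}_2$. This contributes the Gronwall factor $e^{CKt}$.
\item[(b)] \emph{Oscillatory forcing.} By \cref{thm:fixed-block}, \eqref{eq:support-improved} holds on $[0,T]$, so $\supp\theta_{\osc}(t)\subset\{\tfrac78\lambda^{-1}<|x|<\tfrac{17}8\lambda^{-1}\}$ is logarithmically separated from $E_\lambda$. By \cref{lem:Nfold}, $\theta_{\osc}(t)$ is $2\pi/N$-periodic, and $\Norm{\theta_{\osc}(t)}_\infty=\Norm{\theta_{\osc,0}}_\infty\lesssim c_0\lambda^{1-s}N^{-s}$. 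Apply \cref{lem:radial-velocity-decay} with $j=0$, $R=\lambda^{-1}$, the annulus $E=\{4<|y|<5\}$, and the common rate $\kappa$. This gives
\[
\Norm{u_r[\theta_{\osc}(t)]}_{L^\infty(E_\lambda)}\le C(1+N)^2e^{-\kappa N}c_0\lambda^{1-s}N^{-s}.
\]
Multiplying by $|E_\lambda|^{1/2}\simeq\lambda^{-1}$ yields the $L^2(E_\lambda)$ bound $Cc_0\lambda^{-s}N^{-s}(1+N)^2e^{-\kappa N}$.
\end{enumerate}

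Gronwall's inequality with zero initial data then gives
\[
\Norm{\theta_{\rad,\mathrm{err}}(t)}_2\le C\,Kt\,(1+N)^2e^{-\kappa N}\lambda^{-s}N^{-s}e^{CKt}.
\]
Finally, absorb the polynomial prefactor: $Kt(1+N)^2e^{-\kappa N/2}\le C$, which holds because $KT\le\varepsilon N$ by \eqref{eq:block-cond2} and $N$ is large. This yields $C\lambda^{-s}N^{-s}e^{-\kappa N/2+CKt}$, which is \eqref{eq:W-L2}.

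The main point needing care is step (b). It requires the support separation to hold on all of $[0,T]$, which \cref{thm:fixed-block} guarantees. It also requires the decay constant in \cref{lem:radial-velocity-decay} for this particular pair of annuli to be at least $\kappa$; if the paper's $\kappa$ was defined as a common rate over all annuli used, this is immediate. Otherwise one adjusts $\kappa$ to the minimum over the annuli involved.
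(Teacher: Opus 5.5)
Your proposal is correct and follows the paper's proof essentially step by step. It uses the same subtracted equation and the same $L^2$ energy estimate, split into the self-interaction term (controlled by $L^2$ boundedness of the Riesz transforms, giving $e^{CKt}$) and the oscillatory forcing (controlled by support separation, $N$-fold symmetry, $L^\infty$ conservation, \cref{lem:radial-velocity-decay} and $|\supp\nabla\overline\theta_{\rad}|^{1/2}\lesssim\lambda^{-1}$), followed by Gronwall. The only difference is cosmetic: the paper evaluates the Gronwall integral exactly, so the factor $K$ in the forcing cancels against $\int_0^t e^{CK(t-\zeta)}\,\dd\zeta\le e^{CKt}/(CK)$. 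Your extra appeal to $KT\le\varepsilon N$ is therefore unnecessary, though harmless.
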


\begin{proof}
	Since $\overline\theta_{\rad}$ is radial and stationary,
	$u[\overline\theta_{\rad}]
	\cdot\nabla\overline\theta_{\rad}=0$. 
	Hence $\theta_{\rad}$ and
$\overline\theta_{\rad}$ satisfy, respectively,
	\[
	\left(\partial_t+u[\theta]\cdot\nabla\right)\theta_{\rad}=0,
	\quad
	\left(
	\partial_t+u[\overline\theta_{\rad}]\cdot\nabla
	\right)\overline\theta_{\rad}=0.
	\]
Since
	$\theta_{\rad}(0)=\overline\theta_{\rad}(0)=\theta_{\rad,0}$,
	we subtract the two equations and use $\theta-\overline\theta_{\rad}
	=\theta_{\osc}+\theta_{\rad,\mathrm{err}}$ to obtain
	\[
	\left(\partial_t+u[\theta]\cdot\nabla\right)
	\theta_{\rad,\mathrm{err}}
	=
	-\left(
	u[\theta_{\osc}]
	+
	u[\theta_{\rad,\mathrm{err}}]
	\right)
	\cdot\nabla\overline\theta_{\rad},
	\quad
	\theta_{\rad,\mathrm{err}}(0)=0.
	\]
	The $L^2$ energy estimate then yields
	\begin{equation}\label{eq:radial-error-L2-energy}
	\frac{\dd}{\dd t}
	\Norm{\theta_{\rad,\mathrm{err}}(t)}_2
	\leq
	\Norm{
		u_r[\theta_{\osc}(t)]
		\partial_r\overline\theta_{\rad}
	}_2
	+
	\Norm{u[\theta_{\rad,\mathrm{err}}(t)]}_2
	\Norm{\nabla\overline\theta_{\rad}}_\infty.
	\end{equation}
	
By \cref{lem:Nfold}, $\theta_{\osc}(t)$ is $2\pi/N$-periodic. The
support separation follows from \eqref{eq:support-improved}, and exact
transport gives
$\|\theta_{\osc}(t)\|_\infty=\|\theta_{\osc,0}\|_\infty$.
Scaling and \cref{lem:radial-velocity-decay}, with decay rate
$\kappa$ and $(1+N)^2e^{-\kappa N}\leq Ce^{-\kappa N/2}$, imply
	\begin{align*}
		\Norm{u_r[\theta_{\osc}(t)]}_{
			L^\infty(\supp\nabla\overline\theta_{\rad})}
		&\leq
		Cc_0\lambda^{1-s}N^{-s}e^{-\kappa N/2},\\
		\Norm{\nabla\overline\theta_{\rad}}_\infty
		&\leq CK,\\
		|\supp\nabla\overline\theta_{\rad}|^{1/2}
		&\leq C\lambda^{-1}.
	\end{align*}
	These estimates and the $L^2$ boundedness of the Riesz transforms yield
	\begin{align*}
		\Norm{
			u_r[\theta_{\osc}(t)]
			\partial_r\overline\theta_{\rad}
		}_2
		&\leq
		Cc_0K\lambda^{-s}N^{-s}e^{-\kappa N/2},\\
		\Norm{u[\theta_{\rad,\mathrm{err}}(t)]}_2
		\Norm{\nabla\overline\theta_{\rad}}_\infty
		&\leq
		CK\Norm{\theta_{\rad,\mathrm{err}}(t)}_2.
	\end{align*}
	Substituting these bounds into \eqref{eq:radial-error-L2-energy}
	and applying Gronwall's inequality to
	$\Norm{\theta_{\rad,\mathrm{err}}(t)}_2$, with $\theta_{\rad,\mathrm{err}}(0)=0$,
	proves \eqref{eq:W-L2}.
\end{proof}

\begin{lemma}[Oscillatory error]
	\label{lem:E-L2}
	Under the hypotheses of \cref{thm:fixed-block},
	for every $0\leq t\leq T$,
	\begin{equation}\label{eq:E-L2}
	\Norm{\theta_{\osc,\mathrm{err}}(t)}_2
		\leq
		C	\lambda^{-s} N^{-s}\left(
		Kt(1+Kt)N^{1-s}
		+
		e^{-\kappa N/2+CKt}
		\right).
	\end{equation}
\end{lemma}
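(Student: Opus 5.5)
The plan is to write the equation for $\theta_{\osc,\mathrm{err}}$ and run an $L^2$ energy estimate. Subtracting the approximate equation \eqref{eq:approximate-oscillatory} from the exact oscillatory equation and writing $u[\theta]=u[\overline\theta_{\rad}]+u[\theta_{\osc}]+u[\theta_{\rad,\mathrm{err}}]$, we obtain
\[
\bigl(\partial_t+u[\theta]\cdot\nabla\bigr)\theta_{\osc,\mathrm{err}}
=-\bigl(u[\theta_{\osc}]+u[\theta_{\rad,\mathrm{err}}]\bigr)\cdot\nabla\overline\theta_{\osc},
\qquad \theta_{\osc,\mathrm{err}}(0)=0.
\]
Since $u[\theta]$ is divergence free, this gives
\[
\frac{\dd}{\dd t}\Norm{\theta_{\osc,\mathrm{err}}}_2
\le \Norm{u[\theta_{\osc}]\cdot\nabla\overline\theta_{\osc}}_2
+\Norm{u[\theta_{\rad,\mathrm{err}}]\cdot\nabla\overline\theta_{\osc}}_2 .
\]

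The second term is easy. We use $\Norm{u[\theta_{\rad,\mathrm{err}}]}_2\le C\Norm{\theta_{\rad,\mathrm{err}}}_2$ together with the bound $\Norm{\nabla\overline\theta_{\osc}}_\infty\lesssim c_0\lambda^{2-s}N^{1-s}(1+Kt)$, read off from \eqref{eq:explicit-approximate}. Inserting \cref{lem:W-L2} and integrating in time, the factor $K(1+Kt)N^{1-s}t$ is absorbed by enlarging the constant in $e^{CKt}$. This produces the term $C\lambda^{-s}N^{-s}e^{-\kappa N/2+CKt}$.

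The main term is the self-interaction $u[\theta_{\osc}]\cdot\nabla\overline\theta_{\osc}$. Here we exploit that $\overline\theta_{\osc}$ oscillates mainly in the direction of $\nabla\Phi_t$. We split
\[
\nabla\overline\theta_{\osc}=\er\,\partial_r\overline\theta_{\osc}+\evarphi\, r^{-1}\partial_\varphi\overline\theta_{\osc}.
\]
The radial derivative is of size $c_0\lambda^{1-s}N^{-s}\cdot\lambda NKt$ and the angular one is of size $c_0\lambda^{2-s}N^{1-s}$.

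A crude estimate then gives
\[
\Norm{u[\theta_{\osc}]}_{L^\infty}\Norm{\nabla\overline\theta_{\osc}}_{L^2(\supp)}.
\]
Using \cref{lem:log-CZ}, the first factor is at most $CN^{-s}\lambda^{1-s}c_0\ell$. The second factor is at most $\Norm{\theta_{\osc,0}}_2\lambda N(1+Kt)/(c_0\lambda^{1-s}N^{-s})$. Combining these gives a bound $\lambda^{-s}N^{-s}\,K(1+Kt)N^{1-s}\ell$ per unit time. This is the claimed size up to the logarithm $\ell$, and up to a factor $Kt$ instead of $t$ after integration.

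To remove the logarithm, we bound $\Norm{u[\theta_{\osc}]}_{L^2}\le C\Norm{\theta_{\osc}}_2$ rather than in $L^\infty$ wherever that is possible. For the $\er$ component, $u_r[\theta_{\osc}]\partial_r\overline\theta_{\osc}$, we would instead use that $\theta_{\osc}=\widetilde\Theta_{\osc}\circ\cS_t^{-1}$ with $\widetilde\Theta_{\osc}$ having $C^1$ norm only $N^{1-s}$ by \eqref{eq:Q-improved}. Proceeding as in \cref{prop:pullback-velocity}, $u_r[\theta_{\osc}]$ is then controlled in $L^\infty$ by the $C^\alpha$ norm of $\widetilde\Theta_{\osc}$ without loss of $Kt$. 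The product then has size $\lambda^{1-s}N^{-s}\cdot\lambda^{1-s}N^{1-s}\lambda Kt$ on a set of measure $\lambda^{-2}$.

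Integrating over $[0,t]$ yields the contribution $C\lambda^{-s}N^{-s}Kt(1+Kt)N^{1-s}$, where $K=c_0\lambda^{2-s}$ and $N^{1-s}$ carries the smallness.

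The obstacle I expect is precisely this main term: obtaining the estimate with exactly the factor $Kt(1+Kt)N^{1-s}$ and no logarithmic loss. If the $L^\infty$ bound for $u[\theta_{\osc}]$ cannot be made free of $\ell$, my fallback is to accept the extra factor $\ell$. It is harmless for the later transfer of the lower bound, because $N^{1-s}$ is a power of $\lambda$ while $\ell$ is only logarithmic.
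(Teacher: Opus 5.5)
Your error equation, the $L^2$ energy inequality, and your handling of the $u[\theta_{\rad,\mathrm{err}}]$ term through \cref{lem:W-L2} match the paper. The gap is in the main term. You treat it as the hard part, but your proposed log-free argument for the radial component does not work, and your fallback proves a weaker inequality than \eqref{eq:E-L2}. The missing observation is that no logarithm ever arises if you put the velocity, not the gradient, in $L^2$, for all components at once:
\[
\Norm{u[\theta_{\osc}]\cdot\nabla\overline\theta_{\osc}}_2
\le\Norm{u[\theta_{\osc}]}_2\Norm{\nabla\overline\theta_{\osc}}_\infty
\le C\Norm{\theta_{\osc,0}}_2\,KN^{1-s}(1+Kt).
\]
This uses the $L^2$ boundedness of the Riesz transforms, conservation of $\Norm{\theta_{\osc}(t)}_2$, and the very bound $\Norm{\nabla\overline\theta_{\osc}}_\infty\lesssim KN^{1-s}(1+Kt)$ that you already used for the second term. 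That bound includes the radial derivative, since $|\partial_r\overline\theta_{\osc}|\lesssim c_0\lambda^{2-s}N^{-s}(1+NKt)$, so the $\er$ component needs no separate treatment. Integrating in time and using \eqref{eq:osc-L2-size} yields exactly $C\lambda^{-s}N^{-s}Kt(1+Kt)N^{1-s}$. This is the paper's entire argument for the main term: no polar splitting, no \cref{lem:log-CZ}, and no adapted coordinates.

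The route you sketch for $u_r[\theta_{\osc}]\,\partial_r\overline\theta_{\osc}$ would not give the size you state. The estimate \eqref{eq:Q-improved}, together with $\Norm{\widetilde\Theta_{\osc}}_\infty\simeq c_0\lambda^{1-s}N^{-s}$, controls $\Norm{\widetilde\Theta_{\osc}}_{C^\alpha}$ only at order $c_0\lambda^{1-s}N^{\alpha-s}$, and this is already sharp at $t=0$. Passing back to $\Theta_{\osc}=\widetilde\Theta_{\osc}\circ\cS_t^{-1}$ in $C^\alpha$ also costs a factor $(1+Kt)^\alpha$, as in the proof of \cref{prop:pullback-velocity}. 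Hence this route gives only $\Norm{u_r[\theta_{\osc}]}_\infty\lesssim c_0\lambda^{1-s}N^{\alpha-s}(1+Kt)^\alpha$. That is a polynomial loss, worse than the logarithm $\ell$ you wanted to remove, so the factor $\lambda^{1-s}N^{-s}$ you claim for $u_r$ is unjustified. Accepting the factor $\ell$ may be harmless for the later transfer of the lower bound, but it does not prove the lemma as stated.
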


	\begin{proof}
		Subtracting the exact and approximate oscillatory equations gives
		\[
		\left(
		\partial_t
		+
		u[\theta]\cdot\nabla
		\right)
		\theta_{\osc,\mathrm{err}}
		=
		-u[\theta_{\osc}+\theta_{\rad,\mathrm{err}}]
		\cdot\nabla\overline\theta_{\osc},
		\quad
		\theta_{\osc,\mathrm{err}}(0)=0.
		\]
		Since $u[\theta]$ is divergence-free, the $L^2$ energy estimate yields
		\[
		\frac{\dd}{\dd t}
		\Norm{\theta_{\osc,\mathrm{err}}(t)}_2
	\leq
	\left(
	\Norm{u[\theta_{\osc}(t)]}_2
	+
		\Norm{u[\theta_{\rad,\mathrm{err}}(t)]}_2
		\right)
		\Norm{\nabla\overline\theta_{\osc}(t)}_\infty.
		\]
		The explicit formula \eqref{eq:explicit-approximate} gives
		\[
		\Norm{\nabla\overline\theta_{\osc}(t)}_\infty
		\leq
		CKN^{1-s}(1+Kt).
		\]
		By \eqref{eq:component-L2-exact}, \cref{lem:W-L2}, and the $L^2$
		boundedness of the Riesz transforms,
		\begin{align*}
			\Norm{u[\theta_{\osc}(t)]}_2
			&\leq
			C\Norm{\theta_{\osc,0}}_2\leq C \lambda^{-s}N^{-s},\\
			\Norm{u[\theta_{\rad,\mathrm{err}}(t)]}_2
			&\leq
			C\lambda^{-s}N^{-s}e^{-\kappa N+CKt}.
		\end{align*}
		The parameter choice in \cref{thm:fixed-block} gives
		$1+KT\le N$, and hence
		$Kt(1+Kt)N^{1-s}\le N^{3-s}\lesssim e^{\kappa N/2}$.
		Integrating the energy inequality with the preceding $L^2$ bounds
		then yields \eqref{eq:E-L2}.
	\end{proof}

Combining \eqref{eq:exact-approx-L2}, \eqref{eq:W-L2} and
\eqref{eq:E-L2} with the initial $L^2$ estimates
\eqref{eq:osc-L2-size} in \cref{cor:initial-small}, we obtain
\begin{equation}\label{eq:total-L2}
	\frac{
		\Norm{\theta(t)-\overline\theta(t)}_2
	}{
		\Norm{\theta_{\osc,0}}_2
	}
	\leq
	C\left(
	Kt(1+Kt)N^{1-s}
	+
	e^{-\kappa N/2+CKt}
	\right),
	\quad
	0\leq t\leq T.
\end{equation}

\subsection{Proof of Theorem~\ref{thm:block-inflation}}
\label{sec:optimization}

To transfer the growth of the approximate oscillatory component
to the modes $\pm N$ of the exact solution, we first project
onto these angular modes. For $1<\sigma<2$, we then use the
$L^2$ error estimate \eqref{eq:total-L2} and the WKB bounds in
\cref{lem:WKB}, interpolating at an exponent below $\sigma$
to obtain an $H^\sigma$ lower bound for the exact solution.

\begin{proof}[Proof of \cref{thm:block-inflation}]
Since $H^\sigma(\R^2)\hookrightarrow H^{\widehat\sigma}(\R^2)$
whenever $\sigma\geq\widehat\sigma$, it suffices to prove \eqref{eq:block-inflation}
for $\sigma_*(s)<\sigma<2$.

For $\delta\geq0$, set
\[
a_\delta:=\frac{2(2-s)+\delta}{s-1},
~\sigma_\delta(s):=\frac{s(1+a_\delta)}{3-s+a_\delta}
=\frac{s(3-s+\delta)}{1+2s-s^2+\delta}.
\]
The function $\sigma_\delta(s)$ is continuous and strictly increasing
in $\delta$, with
\[
\sigma_0(s)
=
\frac{s(3-s)}{1+2s-s^2}
=
\sigma_*(s).
\]
We may therefore choose $\delta>0$ sufficiently small that
$\sigma_*(s)<\sigma_\delta(s)<\sigma$.
With this $\delta$ fixed, choose $\alpha$ according to
\eqref{eq:alpha-choice}. Set
\[
\eta:=\frac{\sigma_\delta(s)+\sigma}{2},
~\sigma_\delta(s)<\eta<\sigma.
\]

Given $\eps>0$ and $M>0$, we fix $c_0>0$ small enough, via \eqref{eq:initial-small}, so that
	\[
	\Norm{\theta_0}_{H^s}\leq\eps.
	\]
	For the chosen $\delta$, let $\lambda$ be sufficiently large and set
	\[
	K:=c_0\lambda^{2-s},
	\quad
	N:=\left\lfloor\lambda^{a_\delta}\right\rfloor.
	\]
By \cref{thm:fixed-block}, the corresponding solution extends smoothly to
$[0,T]$ and satisfies the bootstrap bounds
\eqref{eq:osc-bootstrap}--\eqref{eq:support-bootstrap}
throughout this interval.

	Set
	\[
	d_\lambda
	:=
	C\left(
	KT(1+KT)N^{1-s}
	+
	e^{-\kappa N/2+CKT}
	\right),
	\quad
	\mathfrak F_\lambda
	:=
	\lambda NK.
	\]
	Then \eqref{eq:total-L2} gives
	\[
	\sup_{0\leq t\leq T}
	\frac{
		\Norm{\theta(t)-\overline\theta(t)}_2
	}{
		\Norm{\theta_{\osc,0}}_2
	}
	\leq d_\lambda.
	\]
	Since
	$a_\delta(s-1)=2(2-s)+\delta$ and $a_\delta>2-s$, the parameter
	choice above also gives
	\[
    d_\lambda\longrightarrow0,
	\quad
	\mathfrak F_\lambda\longrightarrow\infty
	\quad\text{as }\lambda\longrightarrow\infty.
	\]
	
	To isolate the oscillatory modes, use the angular Fourier expansion
	\[
	h(r,\varphi)
	=
	\sum_{m\in\mathbb Z}h_m(r)e^{im\varphi},
	~h_m(r)
	:=
	\frac{1}{2\pi}\int_0^{2\pi}
	h(r,\psi)e^{-im\psi}\,\dd\psi.
	\]
	Define the projection onto the modes $\pm N$ by
	\[
	\Pi_Nh
	:=
	h_N(r)e^{iN\varphi}
	+
	h_{-N}(r)e^{-iN\varphi}.
	\]
	The rotation invariance of $(I-\Delta)^{\rho/2}$ implies that it
	commutes with $\Pi_N$. Since $\Pi_N$ is an orthogonal projection on
	$L^2(\R^2)$, we obtain
	\begin{equation}\label{eq:PiN-contraction}
		\Norm{\Pi_Nh}_{H^\rho}
		=
		\Norm{\Pi_N(I-\Delta)^{\rho/2}h}_2
		\leq
		\Norm{h}_{H^\rho},
		~\rho\geq0.
	\end{equation}
	By \eqref{eq:explicit-approximate}, $\overline\theta_{\rad}$ contains
	only the angular mode $0$, while $\overline\theta_{\osc}$ contains
	only the modes $\pm N$. Thus
	$\Pi_N\overline\theta=\overline\theta_{\osc}$, and
	\[
	\Pi_N\theta-\overline\theta_{\osc}
	=
	\Pi_N(\theta-\overline\theta).
	\]
	Applying \eqref{eq:PiN-contraction} with $\rho=0$ and using
	\eqref{eq:total-L2} gives
	\begin{equation}\label{eq:PiN-L2-error}
		\Norm{\Pi_N\theta(t)-\overline\theta_{\osc}(t)}_2
		\leq
		d_\lambda\Norm{\theta_{\osc,0}}_2,
		~0\leq t\leq T.
	\end{equation}

By \cref{lem:WKB}, uniformly for $t\in[\tau,T]$,
\begin{align}
	\Norm{\overline\theta_{\osc}(t)}_{H^\eta}
	&\geq
	c_\eta\Norm{\theta_{\osc,0}}_2
	(t\mathfrak F_\lambda)^\eta,
	\label{eq:direct-WKB-lower}\\
	\Norm{\overline\theta_{\osc}(t)}_{H^\sigma}
	&\leq
	C_\sigma\Norm{\theta_{\osc,0}}_2
	(t\mathfrak F_\lambda)^\sigma.
	\label{eq:direct-WKB-upper}
\end{align}
Since
$\Norm{\overline\theta_{\osc}(t)}_2=\Norm{\theta_{\osc,0}}_2$,
\eqref{eq:PiN-L2-error} and the triangle inequality give
\[
\Norm{\Pi_N\theta(t)}_2
\leq
(1+d_\lambda)\Norm{\theta_{\osc,0}}_2
\leq
2\Norm{\theta_{\osc,0}}_2
\]
for sufficiently large $\lambda$.
	
	Fix $t\in[\tau,T]$. The smoothness and compact support given by
	\cref{thm:fixed-block,prop:support} ensure that
	$\Pi_N\theta(t)\in H^\sigma$. Interpolation between $L^2$ and
	$H^\sigma$ gives
	\begin{equation}\label{eq:PiN-Heta}
		\Norm{\Pi_N\theta(t)}_{H^\eta}
		\leq
		C
		\Norm{\theta_{\osc,0}}_2^{1-\eta/\sigma}
		\Norm{\Pi_N\theta(t)}_{H^\sigma}^{\eta/\sigma}.
	\end{equation}
The triangle inequality and \eqref{eq:direct-WKB-upper} give
\[
\Norm{\Pi_N\theta(t)-\overline\theta_{\osc}(t)}_{H^\sigma}
\leq
\Norm{\Pi_N\theta(t)}_{H^\sigma}
+
C_\sigma\Norm{\theta_{\osc,0}}_2
(t\mathfrak F_\lambda)^\sigma.
\]
Interpolation between $L^2$ and $H^\sigma$, followed by
\eqref{eq:PiN-L2-error} and the inequality
$(a+b)^{\eta/\sigma}\leq a^{\eta/\sigma}+b^{\eta/\sigma}$, yields
\begin{align}
	\Norm{\Pi_N\theta(t)-\overline\theta_{\osc}(t)}_{H^\eta}
	&\leq
	\Norm{\Pi_N\theta(t)-\overline\theta_{\osc}(t)}_2^{1-\eta/\sigma}
	\Norm{\Pi_N\theta(t)-\overline\theta_{\osc}(t)}_{H^\sigma}^{\eta/\sigma}
	\notag\\
	&\leq
	Cd_\lambda^{1-\eta/\sigma}
	\Norm{\theta_{\osc,0}}_2^{1-\eta/\sigma}
	\Norm{\Pi_N\theta(t)}_{H^\sigma}^{\eta/\sigma}
	\notag\\
	&\quad+
	Cd_\lambda^{1-\eta/\sigma}
	\Norm{\theta_{\osc,0}}_2
	(t\mathfrak F_\lambda)^\eta.
	\label{eq:error-Heta}
\end{align}

	Collecting \eqref{eq:direct-WKB-lower},
\eqref{eq:PiN-Heta} and \eqref{eq:error-Heta} yields
\begin{align}
	c_\eta\Norm{\theta_{\osc,0}}_2
	(t\mathfrak F_\lambda)^\eta
	&\leq
	C\left(
	1+d_\lambda^{\,1-\eta/\sigma}
	\right)
	\Norm{\theta_{\osc,0}}_2^{1-\eta/\sigma}
	\Norm{\Pi_N\theta(t)}_{H^\sigma}^{\eta/\sigma}\nonumber\\\label{eq:PiN-lower-preabsorption}
	&\quad+
	Cd_\lambda^{\,1-\eta/\sigma}
	\Norm{\theta_{\osc,0}}_2
	(t\mathfrak F_\lambda)^\eta.
\end{align}
Since \(d_\lambda\to0\), for sufficiently large \(\lambda\),
\[
Cd_\lambda^{\,1-\eta/\sigma}\leq\frac{c_\eta}{2},
\quad
1+d_\lambda^{\,1-\eta/\sigma}\leq2.
\]
We absorb the last term in \eqref{eq:PiN-lower-preabsorption}
into the left-hand side and divide by
$\Norm{\theta_{\osc,0}}_2^{1-\eta/\sigma}$ to obtain
\[
\Norm{\Pi_N\theta(t)}_{H^\sigma}^{\eta/\sigma}
\geq
c_{\eta,\sigma}
\Norm{\theta_{\osc,0}}_2^{\eta/\sigma}
(t\mathfrak F_\lambda)^\eta.
\]
Raising both sides to the power $\sigma/\eta$ and applying
\eqref{eq:PiN-contraction}, we conclude that
\begin{equation}\label{eq:true-lower}
	\Norm{\theta(t)}_{H^\sigma}
	\geq
	\Norm{\Pi_N\theta(t)}_{H^\sigma}
	\geq
	c_{\eta,\sigma}\Norm{\theta_{\osc,0}}_2
	(t\mathfrak F_\lambda)^\sigma,
	~t\in[\tau,T].
\end{equation}
	
	By \eqref{eq:osc-L2-size} and the definitions of
	$K$, $N$ and $\mathfrak F_\lambda$,
	\[
	\Norm{\theta_{\osc,0}}_2
	(t\mathfrak F_\lambda)^\sigma
	\gtrsim_{g,\sigma}
	\tau^\sigma c_0^{1+\sigma}
	\lambda^{
		\sigma(3-s)-s+a_\delta(\sigma-s)
	}.
	\]
	Since
	\(\sigma>\sigma_\delta(s):=\frac{s(1+a_\delta)}
	{3-s+a_\delta}\), we have
	\(\sigma(3-s+a_\delta)-s(1+a_\delta)>0\).
	Thus the displayed lower bound tends to $\infty$ as
	$\lambda\to\infty$, uniformly for $t\in[\tau,T]$, and is therefore
	at least $M$ for sufficiently large $\lambda$, proving
	\eqref{eq:block-inflation}.
\end{proof}

\section{Gluing and instantaneous loss}
\label{sec:gluing}

\subsection{Lower regularity and spatial decoupling}

For the gluing and uniqueness arguments, we need each block to be
small in $H^{1+\gamma}$ uniformly on $[0,T]$.
Fix
\begin{equation}\label{eq:gamma-choice}
 \gamma:=\frac{s-1}{2(1+2s-s^2)}.
\end{equation}
Then $0<\gamma<\sigma_*(s)-1$ by \eqref{eq:sigma-above-one}.

\begin{lemma}
	\label{lem:low-block}
	For arbitrarily small $\zeta>0$, the initial datum can be chosen so that
	the conclusions of \cref{thm:block-inflation} hold and
	\begin{equation}\label{eq:low-block}
		\sup_{0\leq t\leq T}
		\Norm{\theta(t)}_{H^{1+\gamma}}
		\leq \zeta,
		\quad
		\sup_{0\leq t\leq T}
		\Norm{u[\theta(t)]}_\infty
		\leq \zeta.
	\end{equation}
\end{lemma}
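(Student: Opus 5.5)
The plan is to take the block of \cref{thm:block-inflation} with $c_0$ fixed (so that $\Norm{\theta_0}_{H^s}\le\eps$) and to make both quantities in \eqref{eq:low-block} small by sending $\lambda\to\infty$. Taking $\lambda$ larger only helps the conclusions of \cref{thm:block-inflation}, so both requirements are compatible. We split $\theta=\overline\theta_{\rad}+\theta_{\osc}+\theta_{\rad,\err}$ and estimate each piece on $[0,T]$ using the bootstrap bounds of \cref{thm:fixed-block}.

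\emph{Radial pieces.} The stationary part satisfies $\Norm{\overline\theta_{\rad}}_{H^{1+\gamma}}\simeq c_0\lambda^{1+\gamma-s}\to0$, since $1+\gamma<\sigma_*(s)<s$. For the radial error we have $\Norm{\theta_{\rad,\err}}_2\lesssim\lambda^{-s}N^{-s}e^{-\kappa N/2+CKT}$ by \cref{lem:W-L2}. Its $C^{1,\alpha}_x$ norm is at most polynomial in $\lambda$, by \eqref{eq:Wtilde-improved}, \cref{lem:composition} and rescaling, and its support lies in an annulus of width $\sim\lambda^{-1}$. Hence $\Norm{\theta_{\rad,\err}}_{H^1}$ is polynomially bounded. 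I would rather interpolate between $L^2$ and an $H^2$ bound obtained exactly as for the oscillatory part below. In either case the factor $e^{-\kappa N/2+CKT}$, with $N\simeq\lambda^{a_\delta}$ and $KT\le\varepsilon N$, beats every power of $\lambda$.

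\emph{Oscillatory part.} This is the main step. First, I will prove an $H^2$ bound for $\widetilde\Theta_{\osc}$ in the adapted coordinates. Differentiating \eqref{eq:Q-equation} twice gives
\[
\frac{\dd}{\dd t}\Norm{\widetilde\Theta_{\osc}}_{\dot H^2}\lesssim\Norm{\nabla V}_\infty\Norm{\widetilde\Theta_{\osc}}_{\dot H^2}+\Norm{\nabla^2V}_{2}\Norm{\nabla\widetilde\Theta_{\osc}}_\infty .
\]
\cref{lem:pullback-H2} gives $\Norm{\cB_t[\widetilde\Theta_{\osc}]}_{H^2(\mathcal I)}\lesssim\lambda(1+Kt)\Norm{\widetilde\Theta_{\osc}}_{H^2}$, and the radial error contribution is exponentially small. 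The bootstrap gives $\lambda(1+Kt)\Norm{\nabla\widetilde\Theta_{\osc}}_\infty\lesssim K(1+KT)N^{1-s}$, and $\int_0^T\Norm{\chi V}_{C^{1,\alpha}}\lesssim\varepsilon$ by \eqref{eq:block-cond1}. The time integral of the resulting coefficient is therefore $\lesssim\varepsilon+TK(1+KT)N^{1-s}=o(1)$, so Gronwall yields $\Norm{\widetilde\Theta_{\osc}(t)}_{H^2}\lesssim c_0\lambda^{1-s}N^{2-s}$.

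Composing with $\cS_t^{-1}$ costs at most $(1+KT)^2$ in $H^2$ on the fixed annulus, by the derivative bounds in \cref{lem:composition}. Rescaling to $x$ then gives
\[
\Norm{\theta_{\osc}(t)}_{\dot H^2}\lesssim\Norm{\theta_{\osc,0}}_2\bigl(\lambda N(1+KT)\bigr)^2,
\]
which is the same size as the WKB upper bound \eqref{eq:WKB-upper}. Interpolating with $\Norm{\theta_{\osc}}_2=\Norm{\theta_{\osc,0}}_2$ gives
\[
\Norm{\theta_{\osc}(t)}_{H^{1+\gamma}}\lesssim c_0^{2+\gamma}(1+T)^{1+\gamma}\lambda^{(1+\gamma)(3-s)-s}N^{1+\gamma-s}.
\]
With $N\simeq\lambda^{a_\delta}$, the exponent equals $(1+\gamma)(3-s+a_\delta)-s(1+a_\delta)$. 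This is negative precisely when $1+\gamma<\sigma_\delta(s)$, which holds because $1+\gamma<\sigma_*(s)<\sigma_\delta(s)$. The obstacle here is making the $H^2$ Gronwall close with only polynomial losses; the point is that the inner bound \eqref{eq:pullback-velocity-H2-inner} carries only one factor of $1+Kt$.

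\emph{Velocity.} Scaling gives $\Norm{u[\overline\theta_{\rad}]}_\infty=c_0\lambda^{1-s}\Norm{u[f]}_\infty\to0$. \cref{lem:log-CZ}, applied with the support bound \eqref{eq:support-improved} and the gradient bound used in \cref{prop:support}, gives $\Norm{u[\theta_{\osc}]}_\infty\lesssim c_0\lambda^{1-s}N^{-s}\ell\to0$. The radial error contributes an exponentially small amount, as in the proof of \cref{prop:support}. Choosing $\lambda$ large then makes every term at most $\zeta/3$, which gives \eqref{eq:low-block}.
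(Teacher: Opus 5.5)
Your oscillatory estimate is the paper's argument: the adapted-coordinate $H^2$ Gronwall driven by the one-factor inner bound \eqref{eq:pullback-velocity-H2-inner}, the $(1+KT)^2$ loss when composing back with $\cS_t^{-1}$, scaling, interpolation with $L^2$, and the sign of $(1+\gamma)(3-s+a_\delta)-s(1+a_\delta)$. Bounding the velocity via \cref{lem:log-CZ} and scaling is a valid substitute for the paper's closing step $\Norm{u[\theta]}_\infty\le C_\gamma\Norm{\theta}_{H^{1+\gamma}}$.

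The gap is the radial error $\theta_{\rad,\err}$ in $H^{1+\gamma}$, where neither of your two options works as stated.
\begin{itemize}
\item The first cannot reach order $1+\gamma$. $C^{1,\alpha}$ control on a fixed annulus gives at best $H^{1+\beta}$ with $\beta<\alpha$, and you only extract $H^1$. But $\alpha<\delta(s-1)/(1+2s-s^2+\delta)$ must shrink with $\delta$, while $\gamma$ in \eqref{eq:gamma-choice} is fixed, so in general $\alpha<\gamma$. This is why the paper remarks that no condition $\gamma<\alpha$ is needed.
\item The second cannot be run ``exactly as for the oscillatory part''. $\widetilde\Theta_{\rad,\err}$ lives in $\mathcal A\setminus\mathcal I$, where the inner bound you single out as the key point is unavailable. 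Using \eqref{eq:pullback-velocity-H2} instead, the source $-c_0\lambda^{1-s}f'(R)V_r$ in \eqref{eq:Wtilde-equation} puts a coefficient of size $K(1+Kt)^3$ in front of $\Norm{\widetilde\Theta_{\rad,\err}}_{H^2}$. Its time integral is of order $(KT)^4\sim\lambda^{4(2-s)}$, which exceeds $N\sim\lambda^{a_\delta}$ when $s>3/2$ and $\delta$ is small. The Gronwall factor then swamps $e^{-\kappa N}$, and interpolation with \cref{lem:W-L2} gives nothing.
\end{itemize}
Note also that even a correct $H^2$ bound here is not polynomial in $\lambda$. It carries a factor $e^{O(KT)}$, which is harmless only because $KT=o(N)$.

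The missing step, which is what the paper does, is an $H^2$ estimate for $Z_2:=\Norm{\Theta_{\rad,\err}}_{H^2}$ in the original rescaled variable, directly from \eqref{eq:W-fixed-scale}. Near the radial support the ingredients are:
\begin{itemize}
\item $Ku[f]$ is smooth with $C^2$ norm $O(K)$;
\item $\lambda u[\Theta_{\osc}]$ is $O(KN^{-s})$ in $C^2$ by \cref{lem:far-field};
\item $\lambda\Norm{\Theta_{\rad,\err}}_{C^{1,\alpha}}$ is exponentially small by \eqref{eq:Wtilde-improved} and \cref{lem:composition};
\item the forcing $Kf'u_r[\Theta_{\osc}]$ has $H^2$ norm $O\bigl(c_0K\lambda^{1-s}N^{-s}(1+N)^4e^{-\kappa N}\bigr)$ by \cref{lem:radial-velocity-decay}.
\end{itemize}
Thus the coefficient of $Z_2$ is $O(K)$. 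Gronwall with $Z_2(0)=0$ and $KT=o(N)$ yields $Z_2\lesssim c_0\lambda^{1-s}N^{-s}e^{-\kappa N/2}$. Scaling then makes $\Norm{\theta_{\rad,\err}}_{H^{1+\gamma}}$ negligible, with no interpolation needed.

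You can also close the estimate in adapted coordinates, provided you keep only the radial component of $V$ in the source and use the tame bound \eqref{eq:pullback-components-H2-full}. Then only $K\Norm{\widetilde\Theta_{\rad,\err}}_{H^2}$ multiplies the unknown, while the exponentially small lower-order norms act as forcing.
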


\begin{lemma}[Arbitrary smallness in $H^{1+\gamma}$]
	\label{lem:low-block}
	For arbitrarily small $\zeta>0$, the initial datum can be chosen so
	that the conclusions of \cref{thm:block-inflation} hold and
	\begin{equation}\label{eq:low-block}
		\sup_{0\leq t\leq T}\Norm{\theta(t)}_{H^{1+\gamma}}
		\leq\zeta,
		\qquad
		\sup_{0\leq t\leq T}\Norm{u[\theta(t)]}_\infty
		\leq\zeta.
	\end{equation}
\end{lemma}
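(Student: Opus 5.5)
The plan is to reuse the block from \cref{thm:block-inflation} with the same parameters ($K=c_0\lambda^{2-s}$, $N=\lfloor\lambda^{a_\delta}\rfloor$, $\lambda$ large). We do not alter the block. We only show that its $H^{1+\gamma}$ norm and velocity sup tend to zero as $\lambda\to\infty$, uniformly on $[0,T]$. The solution splits as
\[
\theta=\overline\theta_{\rad}+\theta_{\osc}+\theta_{\rad,\mathrm{err}},
\]
and each piece is estimated separately.

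\emph{Radial part.} By Sobolev scaling, $\Norm{\overline\theta_{\rad}}_{H^{1+\gamma}}\lesssim c_0\lambda^{1+\gamma-s}$. The exponent is negative because $1+\gamma<\sigma_*(s)<s$, so this term tends to zero.

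\emph{Radial error.} Interpolate between the $L^2$ bound of \cref{lem:W-L2} and the $C^{1,\alpha}$ bound \eqref{eq:Wtilde-improved}, transported back to $x$ by \cref{lem:composition}. The latter costs at most $\lambda^{1+\alpha}(1+KT)^{1+\alpha}$ for $C^{1,\alpha}$, and hence in $H^2$ on the compact support after using $\alpha$-H\"older control together with the support bound. Both bounds carry the factor $e^{-\kappa N/2}$, with $N\simeq\lambda^{a_\delta}$. This factor kills every polynomial power of $\lambda$, including $e^{CKT}$, provided $a_\delta>2-s$, which holds. So this term is negligible.

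\emph{Oscillatory part; this is the main step.} Interpolate the exactly conserved $L^2$ norm $\Norm{\theta_{\osc}}_2=\Norm{\theta_{\osc,0}}_2\simeq c_0\lambda^{-s}N^{-s}$ against an $H^2$ bound for $\theta_{\osc}(t)$. For the $H^2$ bound, first apply the $H^2$ transport estimate to \eqref{eq:Q-equation}, with $\chi V$ controlled through \cref{lem:pullback-H2} (inner bound) and \cref{prop:pullback-velocity}. The Gronwall exponent is $\int_0^T\Norm{\nabla(\chi V)}_\infty\lesssim\varepsilon$ by \eqref{eq:block-cond1}. The transport estimate also carries a top-order term $\Norm{\chi V}_{H^2}\Norm{\nabla\widetilde\Theta_{\osc}}_\infty$. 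Using \eqref{eq:pullback-velocity-H2-inner} and the $C^1$ bootstrap, this term is bounded by $K(1+Kt)N^{1-s}\Norm{\widetilde\Theta_{\osc}}_{H^2}$, and its time integral is small by \eqref{eq:block-cond1} (which has the stronger power $(1+KT)^{1+2\alpha}$). This gives $\Norm{\widetilde\Theta_{\osc}(t)}_{H^2_y}\lesssim c_0\lambda^{1-s}N^{2-s}$. Composing with $\cS_t^{-1}$ costs $(1+KT)^2$, and rescaling to $x$ gives
\[
\Norm{\theta_{\osc}(t)}_{H^2}\lesssim c_0\lambda^{2-s}N^{2-s}(1+KT)^2,
\]
which matches the WKB upper bound \eqref{eq:WKB-upper}. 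Interpolation at level $\rho=1+\gamma$ then yields
\[
\Norm{\theta_{\osc}(t)}_{H^{1+\gamma}}\lesssim c_0^{1+\rho}\lambda^{\rho(3-s)-s}N^{\rho-s}\simeq\lambda^{\rho(3-s)-s-a_\delta(s-\rho)}.
\]
This exponent is negative exactly when $\rho<\sigma_\delta(s)$. That holds for $\rho=1+\gamma$ and small $\delta$, because $1+\gamma<\sigma_*(s)$ and $\sigma_\delta$ is continuous in $\delta$. So we shrink $\delta$ once more if necessary, which is compatible with the proof of \cref{thm:block-inflation} since that proof allows any smaller $\delta$. The expected obstacle is precisely this $H^2$ propagation: the oscillatory amplitude loss must be polynomial in $Kt$ and must not exceed the $(\lambda NK t)^2$ WKB growth, and the inner bound \eqref{eq:pullback-velocity-H2-inner} is what prevents extra factors. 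If that proves delicate, a fallback is to interpolate between $L^2$ and the $C^{1,\alpha}$ bound of \cref{thm:fixed-block}, which loses a little but still gives a negative exponent for $\alpha$ small.

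\emph{Velocity.} Apply \cref{lem:log-CZ} to each piece. For the oscillatory piece this gives $\lambda$-scale velocity $\lesssim c_0\lambda^{1-s}N^{-s}\ell$, which tends to zero. The radial piece is the stationary profile with velocity $c_0\lambda^{1-s}\Norm{u[f]}_\infty\to0$ since $s>1$. The radial error is exponentially small. Taking $\lambda$ large then gives \eqref{eq:low-block} with any prescribed $\zeta$, while the conclusions of \cref{thm:block-inflation} persist for the same large $\lambda$.
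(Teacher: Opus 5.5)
Your treatment of the oscillatory component follows the paper's argument. You run the $H^2$ transport estimate for \eqref{eq:Q-equation} in adapted coordinates, using \eqref{eq:pullback-velocity-H2-inner} and the $C^1$ bootstrap so that the top-order term is integrable by \eqref{eq:block-cond1}. You then compose with $\cS_t^{-1}$ at cost $(1+KT)^2$, rescale, and interpolate against the conserved $L^2$ norm. You need not shrink $\delta$: $\sigma_\delta(s)$ increases with $\delta$, and at $q=1+\gamma$ the exponent is $-\tfrac12-\tfrac{s-1-\gamma}{s-1}\delta<0$ for every $\delta>0$. Bounding the velocity piece by piece with \cref{lem:log-CZ}, instead of by Sobolev embedding, is a fine variant.

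The gap is the radial error. A $C^{1,\alpha}$ bound on a compactly supported function gives no $H^2$ control, only $H^{1+\beta}$ for $\beta<\alpha$. So interpolating \cref{lem:W-L2} against \eqref{eq:Wtilde-improved}, pulled back by \cref{lem:composition}, reaches $H^\rho$ only for $\rho<1+\alpha$.
\begin{itemize}
\item By \eqref{eq:alpha-choice}, $\alpha<\delta(s-1)/(1+2s-s^2+\delta)$.
\item This is below $\gamma=\frac{s-1}{2(1+2s-s^2)}$ whenever $\delta\le 2/3$.
\item $\delta$ must be small anyway, to get $\sigma_\delta(s)<\sigma$, and $\delta_j\to0$ in the gluing. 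This is why the paper stresses that no condition $\gamma<\alpha$ is needed.
\end{itemize}
So your argument never transfers the factor $e^{-\kappa N/2}$ to $H^{1+\gamma}$, and $\Norm{\theta_{\rad,\mathrm{err}}(t)}_{H^{1+\gamma}}$ is left uncontrolled. For the same reason, your fallback for the oscillatory part (interpolating $L^2$ against $C^{1,\alpha}$) cannot reach $H^{1+\gamma}$.

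The missing ingredient is a genuine $H^2$ bound for the radial error. The paper gets it from the $H^2$ energy estimate for \eqref{eq:W-fixed-scale}, with $Z_2:=\Norm{\Theta_{\rad,\err}}_{H^2}$:
\begin{itemize}
\item On $\{7/2<R<11/2\}$ the transport field has size $O(K)$: $Ku[f]$ is a fixed smooth field, $\lambda u[\Theta_{\osc}]$ is controlled by \cref{lem:far-field}, and $\lambda u[\Theta_{\rad,\err}]$ by \eqref{eq:Wtilde-improved}.
\item The term $Ku[\Theta_{\rad,\err}]\cdot\nabla f$ is linear in $Z_2$ with coefficient $O(K)$.
\item The source $Kf'u_r[\Theta_{\osc}]$ is $O(c_0K\lambda^{1-s}N^{-s}(1+N)^4e^{-\kappa N})$ in $H^2$ by \cref{lem:radial-velocity-decay}.
\end{itemize}
Since $KT=o(N)$, Gronwall gives $Z_2\lesssim c_0\lambda^{1-s}N^{-s}e^{-\kappa N/2}$, which is what feeds the $H^{1+\gamma}$ bound.

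A related omission appears in your $H^2$ estimate for $\widetilde\Theta_{\osc}$. The piece $\chi\cB_t[\widetilde\Theta_{\rad,\err}]$ of $\chi V$ is covered neither by the inner bound of \cref{lem:pullback-H2} nor by the $C^{1,\alpha}$ bound of \cref{prop:pullback-velocity}. The outer bound of \cref{lem:pullback-H2} would require exactly the $H^2$ control of the radial error you lack. Handle it as the paper does, through support separation and \cref{lem:far-field}, which give $\Norm{\chi\cB_t[\widetilde\Theta_{\rad,\err}]}_{H^2}\lesssim\lambda(1+Kt)^3\Norm{\widetilde\Theta_{\rad,\err}}_2$.
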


\begin{proof}
	We first estimate the oscillatory component in $H^2$. Set
	$X(t):=\Norm{\widetilde\Theta_{\osc}(t)}_{H^2}$.
	By \eqref{eq:support-improved}, $\chi=1$ on
	$\supp\widetilde\Theta_{\osc}(t)$. The inner bound in
	\cref{lem:pullback-H2}, support separation, and the bounds on
	$\cS_t$ give
	\[
	\begin{aligned}
		\Norm{\chi\cB_t[\widetilde\Theta_{\osc}]}_{H^2}
		&\leq C\lambda(1+Kt)X(t),\\
		\sum_{j=0}^{2}
		\Norm{\nabla^j u[\Theta_{\rad,\err}(t)]}
		_{L^\infty(\supp\chi)}
		&\leq C\Norm{\Theta_{\rad,\err}(t)}_1
		\leq C\Norm{\widetilde\Theta_{\rad,\err}(t)}_2,\\
		\Norm{\chi\cB_t[\widetilde\Theta_{\rad,\err}]}_{H^2}
		&\leq C\lambda(1+Kt)^3
		\Norm{\widetilde\Theta_{\rad,\err}(t)}_2.
	\end{aligned}
	\]
	Thus the $H^2$ estimate for \eqref{eq:Q-equation} yields
	\[
	\begin{aligned}
		X'(t)\leq{}&
		C\Bigl(\Norm{\chi V(t)}_{C^1}
		+\lambda(1+Kt)
		\Norm{\nabla\widetilde\Theta_{\osc}(t)}_\infty\Bigr)X(t)\\
		&+C\lambda(1+Kt)^3
		\Norm{\widetilde\Theta_{\rad,\err}(t)}_2
		\Norm{\nabla\widetilde\Theta_{\osc}(t)}_\infty.
	\end{aligned}
	\]
	By \eqref{eq:Q-improved}, \eqref{eq:Q-smallness}, and
	\eqref{eq:Wtilde-improved}, we have
	\[
	\begin{aligned}
		\int_0^T\Bigl(
		\Norm{\chi V(t)}_{C^1}
		+\lambda(1+Kt)
		\Norm{\nabla\widetilde\Theta_{\osc}(t)}_\infty
		\Bigr)\,\dd t
		&\leq C\varepsilon,\\
		\int_0^T\lambda(1+Kt)^3
		\Norm{\widetilde\Theta_{\rad,\err}(t)}_2
		\Norm{\nabla\widetilde\Theta_{\osc}(t)}_\infty\,\dd t
		&\leq Cc_0\lambda^{1-s}N^{2-s}
		TK(1+KT)^3N^{-s-1}e^{-3\kappa N/4}\\
		&=o\bigl(c_0\lambda^{1-s}N^{2-s}\bigr).
	\end{aligned}
	\]
	Since $X(0)\leq Cc_0\lambda^{1-s}N^{2-s}$,
	Gronwall's inequality gives
	\begin{equation}\label{eq:osc-adapted-H2}
		\sup_{0\leq t\leq T}
		\Norm{\widetilde\Theta_{\osc}(t)}_{H^2}
		\leq Cc_0\lambda^{1-s}N^{2-s}.
	\end{equation}
	
	We next estimate the radial error in the original rescaled variable.
	Its support is separated from $\Theta_{\osc}$. 
	Conservation of
	$\Norm{\Theta_{\osc}}_1$ and
	\cref{lem:far-field,lem:radial-velocity-decay} give
	\[
	\begin{aligned}
		\lambda\Norm{u[\Theta_{\osc}(t)]}
		_{C^2(\{7/2<R<11/2\})}
		&\leq CKN^{-s},\\
		\Norm{Kf'u_r[\Theta_{\osc}(t)]}_{H^2}
		&\leq Cc_0K\lambda^{1-s}N^{-s}
		(1+N)^4e^{-\kappa N}.
	\end{aligned}
	\]
	Furthermore, \eqref{eq:Wtilde-improved} and \cref{lem:composition} imply
	\[
	\lambda\Norm{\Theta_{\rad,\err}(t)}_{C^{1,\alpha}}
	\leq CK(1+Kt)^{1+\alpha}
	N^{-s}e^{-3\kappa N/4}.
	\]
	Set $Z_2(t):=\Norm{\Theta_{\rad,\err}(t)}_{H^2}$.
	The $H^2$ estimate for \eqref{eq:W-fixed-scale} now gives
	\[
	\begin{aligned}
		Z_2'(t)\leq{}&
		C\Bigl(K+KN^{-s}
		+\lambda\Norm{\Theta_{\rad,\err}(t)}_{C^{1,\alpha}}\Bigr)
		Z_2(t)\\
		&+Cc_0K\lambda^{1-s}N^{-s}
		(1+N)^4e^{-\kappa N}
	\end{aligned}
	\]
	with $ Z_2(0)=0.$
	The time integral of the coefficient of $Z_2$ is $o(N)$, since
	$KT/N\to0$. Hence
	\begin{equation}\label{eq:radial-error-H2}
		\begin{aligned}
			\sup_{0\leq t\leq T}Z_2(t)
			&\leq Cc_0\lambda^{1-s}N^{-s}
			TK(1+N)^4e^{-\kappa N+o(N)}\\
			&\leq Cc_0\lambda^{1-s}N^{-s}e^{-\kappa N/2}.
		\end{aligned}
	\end{equation}
	
	Measure preservation, \eqref{eq:shear-derivative-bounds}, and
	\eqref{eq:osc-adapted-H2} yield
	\[
	\Norm{\Theta_{\osc}(t)}_2
	\simeq_g c_0\lambda^{1-s}N^{-s},
	\qquad
	\Norm{\Theta_{\osc}(t)}_{H^2}
	\leq Cc_0\lambda^{1-s}(1+Kt)^2N^{2-s}.
	\]
	Interpolation and \eqref{eq:Sobolev-scaling} therefore give, for
	$0\leq q\leq2$,
	\begin{equation}\label{eq:exact-osc-Sobolev-upper}
		\sup_{0\leq t\leq T}\Norm{\theta_{\osc}(t)}_{H^q}
		\leq Cc_0\lambda^{q-s}(1+KT)^qN^{q-s}.
	\end{equation}
	Combining this with the fixed radial profile and
	\eqref{eq:radial-error-H2}, we obtain, for $q=1+\gamma$,
	\begin{equation}\label{eq:exact-low-Sobolev-upper}
		\sup_{0\leq t\leq T}\Norm{\theta(t)}_{H^q}
		\leq Cc_0\lambda^{q-s}
		\left[1+(1+KT)^qN^{q-s}
		+N^{-s}e^{-\kappa N/2}\right].
	\end{equation}
	No condition $\gamma<\alpha$ is needed. Since $q<s$ and
	\[
	q(3-s+a_\delta)-s(1+a_\delta)
	=-\frac12-\frac{s-1-\gamma}{s-1}\delta<0,
	\]
	the right-hand side of \eqref{eq:exact-low-Sobolev-upper} tends to
	zero as $\lambda\to\infty$. Finally, Sobolev embedding and the
	$H^{1+\gamma}$ boundedness of the Riesz transforms give
	\[
	\sup_{0\leq t\leq T}\Norm{u[\theta(t)]}_\infty
	\leq C_\gamma
	\sup_{0\leq t\leq T}\Norm{\theta(t)}_{H^{1+\gamma}}
	\longrightarrow0.
	\]
	Taking $\lambda$ sufficiently large proves \eqref{eq:low-block},
	while \eqref{eq:true-lower} retains the required norm inflation.
\end{proof}

To glue the blocks, we need their mutual interaction to be small.
The next lemma shows that widely separated smooth solutions are
approximately additive on $[0,T]$.

\begin{lemma}
	\label{lem:spatial-decoupling}
	Let $\phi$ and $\psi$ be smooth compactly supported solutions of
	\eqref{eq:SQG} on $[0,T]$. For any $\eta>0$, there exists $D_0>0$
	such that, whenever $|z|\geq D_0$, the smooth solution $\Theta_z$
	with initial datum
	\[
	\Theta_z(0)=\phi(0)+\psi(\cdot-z,0)
	\]
	exists on $[0,T]$ and satisfies
	\begin{equation}\label{eq:spatial-decoupling}
		\sup_{0\leq t\leq T}
		\Norm{\Theta_z(t)-\phi(t)-\psi(\cdot-z,t)}_{H^4}
		\leq\eta.
	\end{equation}
\end{lemma}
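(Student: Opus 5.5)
We argue by a perturbative energy estimate for the difference
\[
w(t):=\Theta_z(t)-\phi(t)-\psi_z(t),\qquad \psi_z(t):=\psi(\cdot-z,t),
\]
with a continuation argument to guarantee that $\Theta_z$ exists on all of $[0,T]$. By translation invariance, $\psi_z$ is again a smooth solution of \eqref{eq:SQG} on $[0,T]$. Set $M_0:=\sup_{t\le T}\bigl(\Norm{\phi(t)}_{H^{6}}+\Norm{\psi(t)}_{H^{6}}\bigr)<\infty$. Since both solutions are smooth and compactly supported, their supports stay in a fixed ball $B(0,\rho_0)$ on $[0,T]$; this follows from the bounded velocity and the characteristic flow. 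Writing $\Theta_z=\phi+\psi_z+w$, the difference solves
\[
\partial_t w+u[\Theta_z]\cdot\nabla w=-u[w]\cdot\nabla(\phi+\psi_z)-u[\psi_z]\cdot\nabla\phi-u[\phi]\cdot\nabla\psi_z,\qquad w(0)=0.
\]
The last two terms are the interaction forcing. They are the only source of $w$, and they are small once $|z|$ is large.

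\textbf{Step 1: the forcing.} For $|z|\ge D_0\ge 4\rho_0$, the supports of $\phi(t)$ and $\psi_z(t)$ are at distance at least $|z|/2$. The term $u[\psi_z]\cdot\nabla\phi$ is supported in $B(0,\rho_0)$. On this ball, \cref{lem:far-field} gives
\[
\Norm{\nabla^j u[\psi_z]}_{L^\infty(B(0,\rho_0))}\le C|z|^{-2-j}\Norm{\psi}_1,\qquad 0\le j\le4.
\]
Combined with $\Norm{\phi}_{H^5}\le M_0$ and the fact that $\Norm{\psi(t)}_1$ is conserved, the Leibniz rule yields $\Norm{u[\psi_z]\cdot\nabla\phi}_{H^4}\le C(M_0,\rho_0)|z|^{-2}$. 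The symmetric term is handled the same way. Hence the total forcing $F_z$ satisfies $\sup_{t\le T}\Norm{F_z(t)}_{H^4}\le C_1|z|^{-2}$.

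\textbf{Step 2: the $H^4$ energy estimate.} Apply $\partial^\nu$ with $|\nu|\le4$ and pair with $\partial^\nu w$. The transport term splits into $u[\phi+\psi_z]\cdot\nabla w$ and $u[w]\cdot\nabla w$. For each piece we use the standard commutator (Kato--Ponce) estimate together with $\operatorname{div}u=0$ and the $H^4$ boundedness of the Riesz transforms. The linear term $u[w]\cdot\nabla(\phi+\psi_z)$ is bounded by $C\Norm{w}_{H^4}\Norm{\phi+\psi_z}_{H^5}$. This gives
\[
\frac{\dd}{\dd t}\Norm{w}_{H^4}\le C\bigl(M_0+\Norm{w}_{H^4}\bigr)\Norm{w}_{H^4}+C_1|z|^{-2},
\]
where we used $\Norm{\nabla u[w]}_\infty+\Norm{\nabla w}_\infty\lesssim\Norm{w}_{H^4}$. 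Because $w(0)=0$, run a bootstrap on the condition $\Norm{w}_{H^4}\le1$. On the bootstrap interval, Gronwall gives
\[
\Norm{w(t)}_{H^4}\le C_1T|z|^{-2}e^{C(M_0+1)T}.
\]
Choose $D_0$ so large that the right-hand side is at most $\min\{\eta,1/2\}$.

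\textbf{Step 3: existence on $[0,T]$.} The bound from Step 2 shows that $\Norm{\Theta_z(t)}_{H^4}$ stays bounded on the bootstrap interval. Since $H^4\hookrightarrow C^{1,\alpha}$, the continuation criterion of \cite{WuHolder} (or that of \cite{CMT} in $H^4$) extends the smooth solution $\Theta_z$. The bootstrap improvement from $1$ to $1/2$ then propagates it all the way to $[0,T]$. This proves \eqref{eq:spatial-decoupling}.

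The main obstacle is Step 2. It requires an honest $H^4$ commutator bound in which no derivative is lost on $w$; this is where the divergence-free structure and the regularity margin $H^5\subset H^6$ of $\phi$ and $\psi$ are used. It also requires that every constant depend only on $M_0$ and $T$, and not on $z$. The far-field decay from Step 1 is then the only place where $|z|$ enters.
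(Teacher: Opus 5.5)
Your proposal is correct and follows essentially the same route as the paper's proof. Both arguments bound the interaction forcing $u[\phi]\cdot\nabla\psi_z+u[\psi_z]\cdot\nabla\phi$ by $O(|z|^{-2})$ in $H^4$ via \cref{lem:far-field} and Leibniz, run an $H^4$ energy estimate for $w$ with $z$-uniform $H^5$ bounds on $\phi+\psi_z$, close a bootstrap on $\Norm{w}_{H^4}\le 1$ by Gronwall, and apply the continuation criterion; the only cosmetic differences are that you measure separation by $|z|$ instead of $D=|z|-2R_T$ and carry an unnecessary $H^6$ bound.
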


\begin{proof}
	Set
	\[
	\Psi_z(x,t):=\psi(x-z,t),
	\quad
	\Theta^{\mathrm{app}}:=\phi+\Psi_z.
	\]
	Since $\phi$ and $\psi$ are smooth and compactly supported on
	$[0,T]$, there exists $R_T<\infty$ such that
	\[
	\supp\phi(t)\cup\supp\psi(t)\subset B(0,R_T),
	\quad 0\leq t\leq T.
	\]
	If $|z|\geq4R_T$, set $D:=|z|-2R_T$. Then
	\[
	\operatorname{dist}
	\bigl(\supp\phi(t),\supp\Psi_z(t)\bigr)
	\geq D,
	\quad 0\leq t\leq T.
	\]

	Substituting $\Theta^{\mathrm{app}}$ into \eqref{eq:SQG}, we obtain
	the error term
	\[
	\mathcal E_z
	:=
	\partial_t\Theta^{\mathrm{app}}
	+u[\Theta^{\mathrm{app}}]\cdot
	\nabla\Theta^{\mathrm{app}}
	=
	u[\phi]\cdot\nabla\Psi_z
	+u[\Psi_z]\cdot\nabla\phi.
	\]
	We claim that
	\begin{equation}\label{eq:finite-residual}
		\sup_{0\leq t\leq T}
		\Norm{\mathcal E_z(t)}_{H^4}
		\leq C_{\phi,\psi}D^{-2},
	\end{equation}
	where $C_{\phi,\psi}$ may depend on $T$, $R_T$, $\phi$ and $\psi$,
	but is independent of $z$.

Indeed, for any multi-index $\beta$ with $|\beta|\leq4$,
\cref{lem:far-field} and the separation of the supports give
\[
\Norm{\partial^\mu u[\phi(t)]}_{L^\infty(\supp\Psi_z(t))}
\leq C_\mu D^{-2-|\mu|}\Norm{\phi(t)}_{L^1},
\quad \mu\leq\beta.
\]
Leibniz's rule therefore yields
\begin{equation}\label{eq:decoupling-first-interaction}
\begin{aligned}
\Norm{\partial^\beta\bigl(u[\phi]\cdot\nabla\Psi_z\bigr)}_2
&\leq C\sum_{\mu\leq\beta}
\Norm{\partial^\mu u[\phi]}_{L^\infty(\supp\Psi_z)}
\Norm{\partial^{\beta-\mu}\nabla\Psi_z}_2\\
&\leq C_{\phi,\psi}D^{-2}.
\end{aligned}
\end{equation}
Similarly,
\begin{equation}\label{eq:decoupling-second-interaction}
	\Norm{u[\Psi_z]\cdot\nabla\phi}_{H^4}
	\leq C_{\phi,\psi}D^{-2}.
\end{equation}
Summing \eqref{eq:decoupling-first-interaction} over $|\beta|\leq4$
and combining it with \eqref{eq:decoupling-second-interaction} proves
\eqref{eq:finite-residual}.

Local existence gives a solution $\Theta_z$ with initial datum
$\Theta^{\mathrm{app}}(0)$, defined on its maximal interval of
smooth existence $[0,T_z)$. The difference
$w:=\Theta_z-\Theta^{\mathrm{app}}$ satisfies
\begin{equation*}
	\partial_t w
	+u[\Theta^{\mathrm{app}}+w]\cdot\nabla w
	+u[w]\cdot\nabla\Theta^{\mathrm{app}}
	=-\mathcal E_z,
	~w(0)=0.
\end{equation*}
By the smoothness of $\phi$ and $\psi$ and translation invariance
of the Sobolev norms,
\[
\sup_{0\leq t\leq T}
\Norm{\Theta^{\mathrm{app}}(t)}_{H^5}
\leq
\sup_{0\leq t\leq T}
\left(
\Norm{\phi(t)}_{H^5}
+\Norm{\psi(t)}_{H^5}
\right)
<\infty,
\]	
uniformly in $z$. The standard product and commutator
estimates, together with Sobolev embedding, the boundedness of the
Riesz transforms and \eqref{eq:finite-residual}, yield
\begin{equation}\label{eq:finite-H4-energy}
	\frac{\dd}{\dd t}\Norm{w}_{H^4}
	\leq
	C_{\phi,\psi}
	\left(
	\Norm{w}_{H^4}
	+\Norm{w}_{H^4}^2
	+D^{-2}
	\right),
\end{equation}
for $0\leq t<\min\{T,T_z\}$.	
	
	Let $0<t_*<\min\{T,T_z\}$ and assume that
	\[
	\sup_{0\leq t\leq t_*}\Norm{w(t)}_{H^4}\leq1.
	\]
	Then \eqref{eq:finite-H4-energy} and Gronwall's inequality give
	\[
	\sup_{0\leq t\leq t_*}\Norm{w(t)}_{H^4}
	\leq C_{\phi,\psi}D^{-2}.
	\]
	Choose $D$ sufficiently large that
	$C_{\phi,\psi}D^{-2}\leq\min\{1/2,\eta\}$.
	This strictly improves the bootstrap assumption, so a continuity
	argument yields
	\begin{equation}\label{eq:finite-H4-small}
		\sup_{0\leq t<\min\{T,T_z\}}
		\Norm{w(t)}_{H^4}
		\leq C_{\phi,\psi}D^{-2}
		\leq\min\left\{\frac12,\eta\right\}.
	\end{equation}
	Since $\Theta_z=\Theta^{\mathrm{app}}+w$ is uniformly bounded in
	$H^4$ on $[0,\min\{T,T_z\})$, the continuation criterion implies
	$T_z>T$. Thus \eqref{eq:finite-H4-small} holds on $[0,T]$ by
	continuity, proving \eqref{eq:spatial-decoupling}.
\end{proof}

\subsection{Gluing of blocks and convergence}
Keep $\gamma$ fixed as in \eqref{eq:gamma-choice}; this gives one
common lower regularity space for the limiting solution. The H\"older
exponent is chosen separately for each block.
Set
\begin{equation}\label{eq:block-parameters}
	\sigma_j
	:=
	\sigma_*(s)+\frac{s-\sigma_*(s)}{j+1},
	~\tau_j:=\frac{T}{j+1}.
\end{equation}
Then $\sigma_j\downarrow\sigma_*(s)$ and $\tau_j\downarrow0$.
Hence each fixed $t\in(0,T]$ belongs to $[\tau_j,T]$ for all
sufficiently large $j$.

For each $j$, we follow the construction in \cref{sec:optimization},
choosing $\delta=\delta_j>0$ so that
$\sigma_{\delta_j}(s)<\sigma_j$, then choosing
$\alpha=\alpha_j>0$ to satisfy \eqref{eq:alpha-choice} with
$\delta=\delta_j$, and setting
$N=N_j:=\lfloor\lambda_j^{a_{\delta_j}}\rfloor$.
Fix all constants for the $j$th block before choosing $\lambda_j$.
The construction needs no uniform lower bound on $\alpha_j$.
By \cref{thm:block-inflation,lem:low-block}, we may take
$\lambda_j$ sufficiently large to obtain a smooth block solution
$\vartheta_j$ on $[0,T]$ such that
\begin{align}
	\Norm{\vartheta_j(0)}_{H^s}
	&\leq \eps 2^{-j-2}, \label{eq:block-list1}\\
	\sup_{0\leq t\leq T}\Norm{\vartheta_j(t)}_{H^{1+\gamma}}
	+\sup_{0\leq t\leq T}\Norm{u[\vartheta_j(t)]}_\infty
	&\leq 2^{-j-2}, \label{eq:block-list2}\\
	\Norm{\vartheta_j(t)}_{H^{\sigma_j}}
	&\geq 2^{2j},~t\in[\tau_j,T]. \label{eq:block-list3}
\end{align}
Let $\vartheta_{j,\osc}(0)$ denote the initial oscillatory component
of $\vartheta_j$. Increasing $\lambda_j$ if necessary also ensures that
\[ \supp\vartheta_j(t)\subset B(0,2^{-j-4}) \quad\text{for }0\leq t\leq T, \quad \Norm{\vartheta_{j,\osc}(0)}_2\leq1. \]

We now add the blocks one at a time, placing each new center far from
the preceding ones. The resulting smooth solutions will converge to
the solution with the desired initial datum. Set $\eta_1:=0$.
For $j\geq2$, define
\begin{equation*}
	\eta_j
	:=
	2^{-j-10}
	\min_{1\leq i\leq j}
	\Norm{\vartheta_{i,\osc}(0)}_2.
\end{equation*}
Fix $C_\gamma>0$ such that
\begin{equation}\label{Riesz-infty-H1+gamma}
	\Norm{u[h]}_\infty
	\leq C_\gamma\Norm{h}_{H^{1+\gamma}}.
\end{equation}
For $j\geq1$, define
\begin{equation}\label{eq:Mj-definition}
	M_j
	:=
	1+C_{\mathrm{uni}}
	\biggl(
	\sup_{0\leq t\leq T}\Norm{\vartheta_j(t)}_{H^4}
	+\sum_{n\geq\max\{2,j\}}\eta_n
	\biggr),
\end{equation}
where $C_{\mathrm{uni}}\geq1$ is a sufficiently large constant,
independent of $j$, to be fixed below.

Set $z_1:=0$ and $\theta^{(1)}:=\vartheta_1$.
For $J\geq1$, suppose that we have chosen the centers
$z_1,\dots,z_J$ and constructed a smooth compactly supported
solution $\theta^{(J)}$ on $[0,T]$ with initial datum
\begin{equation*}
	\theta^{(J)}(0)
	=
	\sum_{j=1}^J\vartheta_j(\cdot-z_j,0).
\end{equation*}
Let $D_{J+1}$ denote the constant $D_0$ in
\cref{lem:spatial-decoupling} corresponding to
$\phi=\theta^{(J)}$, $\psi=\vartheta_{J+1}$ and
$\eta=\eta_{J+1}$. Choose $z_{J+1}\in\R^2$ such that
\begin{equation}\label{eq:center-separation}
	|z_{J+1}|
	\geq
	\max\left\{
	J+1,\,
	D_{J+1},\,
	4+2^{-4}+TC_\gamma
	+\max_{1\leq i\leq J}|z_i|
	\right\}
\end{equation}
and
\begin{equation}\label{eq:uniqueness-separation}
	\begin{aligned}
		|z_{J+1}-z_i|
		\geq{}&
		2^{-4}+TC_\gamma\\
		&+\max\left\{
		2^{i+12}M_i e^{C_{\mathrm{uni}}M_iT},
		2^{J+13}M_{J+1}e^{C_{\mathrm{uni}}M_{J+1}T}
		\right\},
		~1\leq i\leq J.
	\end{aligned}
\end{equation}
Both conditions hold when $|z_{J+1}|$ is sufficiently large.
In particular, \eqref{eq:center-separation} gives
\[
|z_{J+1}-z_i|
\geq
4+2^{-4}+TC_\gamma,
~1\leq i\leq J.
\]
With this choice, \cref{lem:spatial-decoupling} yields a smooth
compactly supported solution $\theta^{(J+1)}$ on $[0,T]$ satisfying
\[
\theta^{(J+1)}(0)
=
\sum_{j=1}^{J+1}\vartheta_j(\cdot-z_j,0)
\]
and
\begin{equation}\label{eq:gluing-increment}
	\sup_{0\leq t\leq T}
	\Norm{
		\theta^{(J+1)}(t)-\theta^{(J)}(t)
		-\vartheta_{J+1}(\cdot-z_{J+1},t)
	}_{H^4}
	\leq\eta_{J+1}.
\end{equation}
This completes the induction. To record its error, set $R_1:=0$ and,
for $n\geq2$, define
\begin{equation*}
	R_n
	:=
	\theta^{(n)}-\theta^{(n-1)}-\vartheta_n(\cdot-z_n).
\end{equation*}

Since $\theta^{(1)}=\vartheta_1$ and $z_1=0$, we have
\[
	\theta^{(J)}(t)
	=
	\sum_{j=1}^J\vartheta_j(\cdot-z_j,t)+
	\sum_{j=2}^J
	\left[
	\theta^{(j)}(t)-\theta^{(j-1)}(t)
	-\vartheta_j(\cdot-z_j,t)
	\right].
\]
Translations preserve the $H^{1+\gamma}$ norm, and
$1+\gamma<4$. Thus \eqref{eq:block-list2} and
\eqref{eq:gluing-increment} give
\begin{equation}\label{eq:finite-Hgamma-bound}
		\sup_{0\leq t\leq T}
		\Norm{\theta^{(J)}(t)}_{H^{1+\gamma}}
		\leq
		\sum_{j=1}^J
		\sup_{0\leq t\leq T}
		\Norm{\vartheta_j(t)}_{H^{1+\gamma}}
		+\sum_{j=2}^J\eta_j\leq
		\frac14+2^{-10}
		<\frac12.
\end{equation}

For $1\leq j\leq J$, let $\theta_j^{(J)}$ solve
\[
\left\{
\begin{aligned}
	(\partial_t+u[\theta^{(J)}]\cdot\nabla)\theta_j^{(J)}
	&=0,\\
	\theta_j^{(J)}(0)
	&=\vartheta_j(\cdot-z_j,0).
\end{aligned}
\right.
\]
Uniqueness for the linear transport equation gives
\[
\theta^{(J)}=\sum_{j=1}^J\theta_j^{(J)}.
\]
By \eqref{eq:finite-Hgamma-bound} and
\eqref{Riesz-infty-H1+gamma},
\[
\sup_{0\leq t\leq T}
\Norm{u[\theta^{(J)}(t)]}_\infty
\leq\frac{C_\gamma}{2}.
\]
Each characteristic therefore moves by at most $tC_\gamma/2$.
Since
$\supp\theta_j^{(J)}(0)\subset B(z_j,2^{-j-4})$,
we obtain
\begin{equation}\label{eq:finite-component-support}
	\supp\theta_j^{(J)}(t)
	\subset
	B\left(z_j,2^{-5}+\frac{tC_\gamma}{2}\right)
	\subset
	B\left(z_j,2^{-5}+\frac{TC_\gamma}{2}\right),
	~0\leq t\leq T.
\end{equation}
Using this inclusion together with \eqref{eq:center-separation}, we obtain
\[
\operatorname{dist}
\left(\supp\theta_i^{(J)}(t),
\supp\theta_j^{(J)}(t)\right)
\geq4,
~1\leq i\neq j\leq J,
~0\leq t\leq T.
\]
The balls
$B\left(z_j,1+2^{-5}+\frac{TC_\gamma}{2}\right)$,
$j\geq1$, are also pairwise disjoint.

Define
\begin{equation}\label{eq:glued-data}
	\theta_0(x)
	:=
	\sum_{j=1}^\infty\vartheta_j(x-z_j,0).
\end{equation}
Since
$$
\supp\vartheta_j(\cdot-z_j,0)\subset B(z_j,2^{-j-4})
$$
and \(|z_j|\to\infty\), only finitely many terms in the series \eqref{eq:glued-data} are nonzero on any compact subset of \(\R^2\). By \eqref{eq:block-list1}, the series in \eqref{eq:glued-data}
converges in $H^s$, with
\[
\Norm{\theta_0}_{H^s}
\leq
\sum_{j=1}^\infty\Norm{\vartheta_j(0)}_{H^s}
\leq\eps.
\]

The summable block and interaction bounds now give convergence of
the finite solutions. By
\eqref{eq:block-list2} and \eqref{eq:gluing-increment},
\[
	\sup_{0\leq t\leq T}
	\Norm{\theta^{(J+1)}(t)-\theta^{(J)}(t)}_{H^{1+\gamma}}
	\leq
	\sup_{0\leq t\leq T}
	\Norm{\vartheta_{J+1}(t)}_{H^{1+\gamma}}
	+\eta_{J+1}\leq
	2^{-J-3}+\eta_{J+1}.
\]
The right-hand side is summable in $J$. Hence
$\{\theta^{(J)}\}$ is Cauchy in $L^\infty([0,T];H^{1+\gamma})$.
Consequently, there exists $\theta$ such that
\begin{equation}\label{eq:glued-limit}
	\theta^{(J)}\longrightarrow\theta
	\quad\text{in }
	L^\infty([0,T];H^{1+\gamma}(\R^2)).
\end{equation}
Finally, \eqref{eq:finite-Hgamma-bound} and
\eqref{Riesz-infty-H1+gamma} yield
\begin{equation}\label{eq:limit-Hgamma-bound}
	\sup_{0\leq t\leq T}
	\Norm{\theta(t)}_{H^{1+\gamma}}
	\leq\frac12,
	~\sup_{0\leq t\leq T}
	\Norm{u[\theta(t)]}_\infty
	\leq\frac{C_\gamma}{2}.
\end{equation}

To pass the equation to the limit pointwise, we also need local
$H^4$ convergence. Let $E\subset\R^2$ be
a bounded smooth domain. The support bounds for the blocks and
$|z_j|\to\infty$ give an index $J_E$ such that
\[
\supp\vartheta_j(\cdot-z_j,t)\cap E=\varnothing,
~j\geq J_E,
~0\leq t\leq T.
\]
Thus, for $M>J\geq J_E$, 
\[
\left.(\theta^{(M)}-\theta^{(J)})\right|_E
=
\left.
\sum_{n=J+1}^M
\left(
\theta^{(n)}-\theta^{(n-1)}
-\vartheta_n(\cdot-z_n)
\right)
\right|_E.
\]
By \eqref{eq:gluing-increment},
\[
\sup_{0\leq t\leq T}
\Norm{\theta^{(M)}(t)-\theta^{(J)}(t)}_{H^4(E)}
\leq
\sum_{n=J+1}^M\eta_n
\longrightarrow0
\]
as $J,M\to\infty$. Since each $\theta^{(J)}$ is smooth,
$\{\theta^{(J)}\}$ is Cauchy in $C([0,T];H^4(E))$. Let
$\theta_E$ denote its limit. Since $H^4(E)\hookrightarrow
H^{1+\gamma}(E)$, \eqref{eq:glued-limit} implies
$\theta_E=\theta$ on $E$. Therefore,
\begin{equation}\label{eq:local-H4-convergence}
	\theta^{(J)}
	\longrightarrow
	\theta
	\quad\text{in }C([0,T];H^4(E)).
\end{equation}
In particular,
$\theta^{(J)}\to\theta$ in $C([0,T];C^1(E))$.

The nonlocal velocity requires a separate local convergence
argument. Fix a compact set $G\subset\R^2$ and choose
$\chi\in C_c^\infty(\R^2)$ equal to
$1$ near $G$. Set $f_J:=\theta^{(J)}-\theta$ and write
\[
u[f_J]
=
u[\chi f_J]+u[(1-\chi)f_J].
\]
By \eqref{eq:local-H4-convergence}, the boundedness of the Riesz
transforms on $H^4$ and Sobolev embedding,
\[
\sup_{0\leq t\leq T}
\Norm{u[\chi f_J(t)]}_{C^1(G)}
\lesssim
\sup_{0\leq t\leq T}
\Norm{\chi f_J(t)}_{H^4}
\longrightarrow0.
\]
For the second term, set
$d:=\operatorname{dist}(G,\supp(1-\chi))>0$.
For $x\in G$ and $y\in\supp(1-\chi)$, we have $|x-y|\geq d$.
The bounds $|\nabla^\ell\cK(z)|\leq C_\ell|z|^{-2-\ell}$
give $\nabla^\ell\cK\in L^2(\{|z|\geq d\})$ for $\ell=0,1$.
Thus Cauchy--Schwarz and \eqref{eq:glued-limit} give
\[
\sup_{0\leq t\leq T}
\Norm{u[(1-\chi)f_J(t)]}_{C^1(G)}
\leq
C_{G,\chi}
\sup_{0\leq t\leq T}\Norm{f_J(t)}_2
\longrightarrow0.
\]
Since each $u[\theta^{(J)}]$ belongs to
$C([0,T];C^1(G))$, these estimates yield
\begin{equation}\label{eq:local-velocity-convergence}
	u[\theta^{(J)}]\longrightarrow u[\theta]
	\quad\text{in }C([0,T];C^1(G)).
\end{equation}

For each $J$, integrating the equation for $\theta^{(J)}$ in time gives
\[
\theta^{(J)}(t)-\theta^{(J)}(0)
=
-\int_0^t
u[\theta^{(J)}(\zeta)]
\cdot\nabla\theta^{(J)}(\zeta)\,\dd\zeta.
\]
By \eqref{eq:local-H4-convergence} and
\eqref{eq:local-velocity-convergence}, the integrand converges locally
uniformly on $\R^2\times[0,T]$. Passing to the
limit as $J\to\infty$, we obtain
\[
\theta(t)-\theta(0)
=
-\int_0^t
u[\theta](\zeta)\cdot\nabla\theta(\zeta)\,\dd\zeta.
\]
Both $\nabla\theta$ and $u[\theta]$ are continuous in space and
time, so $u[\theta]\cdot\nabla\theta$ is continuous. We may therefore differentiate
the integral identity to obtain
\[
\partial_t\theta = -u[\theta]\cdot\nabla\theta
\quad\text{in }(0,T).
\]
At $t=0$ and $t=T$, the relation holds with the right and left
derivatives, respectively. Thus $\theta$, $\nabla\theta$, and
$\partial_t\theta$ are continuous in the interior and extend
continuously to the endpoints. In particular, $\theta$ satisfies
\eqref{eq:SQG} pointwise on $\R^2\times[0,T]$.

Moreover, \eqref{eq:local-velocity-convergence} and
\eqref{eq:limit-Hgamma-bound} give
\[
u[\theta]\in
C([0,T];C^1_{\mathrm{loc}}(\R^2))
\cap L^\infty([0,T];L^\infty(\R^2)).
\]
Together with \eqref{eq:glued-limit} and \eqref{eq:glued-data},
these properties show that $\theta$ is a classical solution of
\eqref{eq:SQG} on $[0,T]$ with initial datum $\theta_0$ in the
sense of \cref{def:classical}.

\subsection{Instantaneous loss and uniqueness}

\begin{proof}[Proof of \cref{thm:main-loss}]

	Let $\theta$ be the solution constructed in the preceding subsection.
We first identify the transported blocks, then prove loss of regularity,
and finally establish uniqueness.
	
\medskip
\noindent\emph{Step 1. Transported blocks.}

Recall the fixed choice \eqref{eq:gamma-choice}, for which
$0<\gamma<\sigma_*(s)-1$ and
\[
\theta\in L^\infty([0,T];H^{1+\gamma}(\R^2)).
\]
Let $\Phi$ denote the flow generated by $u[\theta]$. Since
$u[\theta]\in C([0,T];C^1_{\text{loc}}(\R^2))$ and is uniformly bounded,
the flow is uniquely defined on $[0,T]$. By
\eqref{eq:limit-Hgamma-bound},
\[
|\Phi(x,t)-x|
\leq
\frac{tC_\gamma}{2}.
\]
For each $j$, let $\theta_j$ denote the transport of
$\vartheta_j(\cdot-z_j,0)$ by the flow $\Phi$, namely,
\[
\theta_j\bigl(\Phi(x,t),t\bigr)
:=
\vartheta_j(x-z_j,0).
\]
Since
$\supp\vartheta_j(\cdot-z_j,0)\subset B(z_j,2^{-j-4})$,
we have
\begin{equation}\label{eq:limit-component-support}
	\supp\theta_j(t)
	\subset
	B\left(z_j,2^{-5}+\frac{TC_\gamma}{2}\right),
	\quad
	0\leq t\leq T.
\end{equation}
By \eqref{eq:center-separation}, these supports remain mutually
separated and locally finite. Moreover, since
$\theta(\Phi(x,t),t)=\theta_0(x)$, the local finiteness of
\eqref{eq:glued-data} gives $
\theta=\sum_{j=1}^\infty\theta_j.$

\medskip
\noindent\emph{Step 2. Instantaneous Sobolev loss.}

Localizing near a transported block isolates its norm growth from
the other blocks. We then compare it with the corresponding isolated
solution. Choose a radial function $\chi\in C_c^\infty(\R^2)$ such that
\[
0\leq\chi\leq1,~
\chi=1\ \text{on }B\left(0,\frac12+2^{-5}+\frac{TC_\gamma}{2}\right),
~\supp\chi\subset B\left(0,1+2^{-5}+\frac{TC_\gamma}{2}\right).
\]
and set $\chi_j(x):=\chi(x-z_j)$. By
\eqref{eq:center-separation} and
\eqref{eq:limit-component-support}, for every $0\leq t\leq T$,
$\chi_j=1$ on $\supp\theta_j(t)$ and
$\chi_j=0$ on $\supp\theta_i(t)$ whenever $i\neq j$.

By \eqref{eq:block-parameters},
$\sigma_j\in[\sigma_*(s),(s+\sigma_*(s))/2]\subset(1,2)$.
Since $\chi_j$ is a translate of the fixed function $\chi$,
multiplication by $\chi_j$ is bounded on $H^{\sigma_j}$ uniformly in
$j$. Thus
\[
\Norm{\chi_j f}_{H^{\sigma_j}}
\leq
C\Norm{f}_{H^{\sigma_j}},
\]
where $C$ is independent of $j$.

For each $n\geq2$, \eqref{eq:finite-component-support} gives
\[
\supp R_n(t)
\subset
\bigcup_{i=1}^n
B\left(z_i,2^{-5}+\frac{TC_\gamma}{2}\right).
\]
Hence, if $n<j$, this support is disjoint from $\supp\chi_j$. Moreover,
$\chi_j\vartheta_n(\cdot-z_n)=0$ for $n\neq j$, while
$\chi_j\vartheta_j(\cdot-z_j)=\vartheta_j(\cdot-z_j)$. Therefore, for
$J\geq j\geq1$,
\begin{equation}\label{eq:localized-telescoping}
	\chi_j\theta^{(J)}
	-\vartheta_j(\cdot-z_j)
	=
	\chi_j\sum_{n=\max\{2,j\}}^J R_n.
\end{equation}
By \eqref{eq:gluing-increment} and
\eqref{eq:local-H4-convergence}, we may let $J\to\infty$ and obtain
\[
\chi_j\theta
-\vartheta_j(\cdot-z_j)
=
\chi_j\sum_{n\geq\max\{2,j\}}R_n.
\]
Since $\sigma_j<2<4$, \eqref{eq:gluing-increment} and the definition
of $\eta_n$ yield
\begin{equation*}
	\sup_{0\leq t\leq T}
	\Norm{
		\chi_j\theta(t)-\vartheta_j(\cdot-z_j,t)
	}_{H^{\sigma_j}}
	\leq
	C\sum_{n\geq\max\{2,j\}}\eta_n
	\leq
	C2^{-j-9}
	\Norm{\vartheta_{j,\osc}(0)}_2.
\end{equation*}
Thus, by \eqref{eq:block-list3} and
$\Norm{\vartheta_{j,\osc}(0)}_2\leq1$, for all sufficiently large $j$,
\[
\Norm{\chi_j\theta(t)}_{H^{\sigma_j}}
\geq
2^{2j}
-
C2^{-j-9}\Norm{\vartheta_{j,\osc}(0)}_2
\geq
2^j,
\quad
t\in[\tau_j,T].
\]

Fix $t\in(0,T]$ and $\sigma>\sigma_*(s)$. By
\eqref{eq:block-parameters}, for all sufficiently large $j$,
$t\geq\tau_j$ and $\sigma_j<\sigma$. If
$\theta(t)\in H^\sigma$, then the multiplier estimate gives
\[
2^j
\leq
\Norm{\chi_j\theta(t)}_{H^{\sigma_j}}
\leq
C\Norm{\theta(t)}_{H^\sigma},
\]
where $C$ is independent of $j$. Letting $j\to\infty$ gives a
contradiction. Therefore,
\[
\theta(t)\notin H^\sigma
\quad
\text{for every }t\in(0,T]
\text{ and every }\sigma>\sigma_*(s),
\]
which proves \eqref{eq:main-loss}.

\medskip
\noindent\emph{Step 3. Uniqueness.}

Set
\[
E_{j,0}
:=
\supp\vartheta_j(\cdot-z_j,0),
\quad
D_j
:=
\operatorname{dist}\biggl(
E_{j,0},
\bigcup_{i\neq j}E_{i,0}
\biggr).
\]
For each fixed $j$, choose a bounded smooth domain $E$ with
$\supp\chi_j\subset E$. Multiplying
\eqref{eq:local-H4-convergence} by $\chi_j$ gives
\[
\chi_j\theta^{(J)}
\longrightarrow
\chi_j\theta=\theta_j
\quad\text{in }L^\infty([0,T];H^4(\R^2)).
\]
Passing to the limit in \eqref{eq:localized-telescoping} using
\eqref{eq:local-H4-convergence}, and applying
\eqref{eq:gluing-increment}, we obtain
\begin{equation}\label{eq:uniqueness-block-H4}
	\begin{aligned}
		\sup_{0\leq t\leq T}
		\Norm{\theta_j(t)-\vartheta_j(\cdot-z_j,t)}_{H^4(\R^2)}&\leq
		C\sum_{n\geq\max\{2,j\}}
		\sup_{0\leq t\leq T}
		\Norm{R_n(t)}_{H^4(\R^2)}\\
		&\leq C\sum_{n\geq\max\{2,j\}}\eta_n,
	\end{aligned}
\end{equation}
where $C$ is independent of $j$ and the centers $z_j$. With $C_{\mathrm{uni}}$ in \eqref{eq:Mj-definition} chosen
sufficiently large to absorb the constants in
\eqref{eq:uniqueness-block-H4} and Sobolev embedding, we obtain
\begin{equation}\label{eq:uniqueness-gradient-bound}
	1+\sup_{0\leq t\leq T}
	\left(
	\Norm{\nabla\theta_j(t)}_\infty
	+\Norm{\nabla\theta_j(t)}_2
	\right)
	\leq M_j.
\end{equation}

By the separation condition \eqref{eq:uniqueness-separation} and
$E_{k,0}\subset B(z_k,2^{-5})$, we obtain
\[
D_j\geq
2^{j+12}M_j e^{C_{\mathrm{uni}}M_jT}.
\]
Consequently,
\begin{equation}\label{eq:uniqueness-distance}
	D_j\longrightarrow\infty
	\quad\text{as }j\to\infty,
	\quad
	M_j e^{C_{\mathrm{uni}}M_jT}D_j^{-1}
	\leq2^{-j-12}.
\end{equation}

Let $\widetilde\theta$ be another classical solution on $[0,T]$
with initial datum $\theta_0$. By \cref{def:classical},
\[
\widetilde\theta
\in L^\infty([0,T];H^{1+\widetilde\gamma}(\R^2))
\]
for some $\widetilde\gamma>0$, possibly different from $\gamma$.
Using \eqref{Riesz-infty-H1+gamma} with exponent
$\widetilde\gamma$ and \eqref{eq:limit-Hgamma-bound}, we have
\[
U
:=
\max\left\{
\Norm{u[\theta]}_{L^\infty([0,T];L^\infty)},
\Norm{u[\widetilde\theta]}_{L^\infty([0,T];L^\infty)}
\right\}
<\infty.
\]
Moreover,
\[
u[\widetilde\theta]
\in L^\infty([0,T];W^{1,2}(\R^2)\cap L^\infty(\R^2)),
\quad
\nabla\cdot u[\widetilde\theta]=0.
\]
The pointwise equation and the preceding bounds give
\[
\partial_t\widetilde\theta
=-u[\widetilde\theta]\cdot\nabla\widetilde\theta
\in L^\infty([0,T];L^2).
\]
By Fubini's theorem,
$\partial_t\widetilde\theta(x,\cdot)\in L^1(0,T)$
for almost every $x$. Since
$t\mapsto\widetilde\theta(x,t)$ is differentiable throughout
$(0,T)$, the fundamental theorem of calculus yields
\[
\widetilde\theta(t)-\theta_0
=-\int_0^t
u[\widetilde\theta(\zeta)]\cdot
\nabla\widetilde\theta(\zeta)\,\dd\zeta
\quad\text{in }L^2,\qquad t\in[0,T].
\]
Hence $\widetilde\theta\in W^{1,\infty}([0,T];L^2)$
and satisfies \eqref{eq:SQG} in the distributional sense,
with initial trace $\theta_0$ in $L^2$.

We retain the flow $\Phi$ and the components $\theta_j$ from
Step~1. By the Sobolev transport theory in
\cite{DiPernaLions,Crippa}, the velocity $u[\widetilde\theta]$
generates a unique regular Lagrangian flow $\widetilde\Phi$
preserving Lebesgue measure. We define $\widetilde\theta_j$ by
\[
\widetilde\theta_j(\widetilde\Phi(t,x),t)
=
\vartheta_j(x-z_j,0)
\quad\text{for almost every }x.
\]
Since both flows preserve Lebesgue measure, we have
\[
\Norm{\theta_j(t)}_2
=
\Norm{\widetilde\theta_j(t)}_2
=
\Norm{\vartheta_j(0)}_2.
\]
These norms are summable in $j$ by \eqref{eq:block-list1}.
Hence both series converge in $L^\infty([0,T];L^2)$,
and uniqueness for the corresponding transport equations yields
\[
\theta=\sum_{j\geq1}\theta_j,
\quad
\widetilde\theta=\sum_{j\geq1}\widetilde\theta_j
\quad\text{in }L^\infty([0,T];L^2(\R^2)).
\]

Set $W_j:=\theta_j-\widetilde\theta_j$,
$W:=\theta-\widetilde\theta$, and define
\[
\mathcal E(t):=\sum_{j\geq1}\Norm{W_j(t)}_2.
\]
Since
\[
\sum_{j\geq1}
\Norm{W_j}_{L^\infty([0,T];L^2)}
\leq
2\sum_{j\geq1}\Norm{\vartheta_j(0)}_2
<\infty,
\]
we have $\mathcal E\in L^\infty([0,T])$ and
\[
W=\sum_{j\geq1}W_j
\quad\text{in }L^\infty([0,T];L^2),
\quad
\Norm{W(t)}_2\leq\mathcal E(t)
\]
for almost every $t\in[0,T]$.

Subtracting the equations for the $j$th components of the SQG
solutions $\theta$ and $\widetilde\theta$, we obtain
\begin{equation}\label{eq:component-difference}
	\partial_tW_j
	+u[\widetilde\theta]\cdot\nabla W_j
	+u[W]\cdot\nabla\theta_j
	=0,
	\quad W_j(0)=0.
\end{equation}
Choose $j_0$ such that $D_j\geq4TU$ for all $j\geq j_0$.
Both flows move points by at most $TU$ on $[0,T]$. Hence,
for almost every $t\in[0,T]$,
\begin{equation}\label{eq:competitor-separation}
		\operatorname{dist}\biggl(
		\supp\theta_j(t),
		\bigcup_{i\neq j}
		\left(
		\supp\theta_i(t)\cup\supp\widetilde\theta_i(t)
		\right)
		\biggr)\geq D_j-2TU
		\geq\frac{D_j}{2},
		\quad j\geq j_0.
\end{equation}

For $j\geq j_0$, write
\[
F_j:=\sum_{i\neq j}W_i,
\quad
u[W]=u[W_j]+u[F_j].
\]
The $L^2$ boundedness of the Riesz transforms and
\eqref{eq:uniqueness-gradient-bound} give
\[
\Norm{u[W_j]\cdot\nabla\theta_j}_2
\leq
C_{\mathrm{uni}}M_j\Norm{W_j}_2.
\]
Let $E_j(t):=\supp\theta_j(t)$. By
\eqref{eq:competitor-separation},
\[
\operatorname{dist}\bigl(E_j(t),\supp F_j(t)\bigr)
\geq\frac{D_j}{2}.
\]
The kernel bound $|\cK(z)|\leq C|z|^{-2}$ and
Cauchy--Schwarz therefore imply
\[
	\Norm{u[F_j(t)]}_{L^\infty(E_j(t))}
	\leq
	C_{\mathrm{uni}}
	\left(
	\int_{|z|\geq D_j/2}|z|^{-4}\,\dd z
	\right)^{1/2}
	\Norm{F_j(t)}_2\leq
	C_{\mathrm{uni}}D_j^{-1}\mathcal E(t).
\]
Using \eqref{eq:uniqueness-gradient-bound} once more, we obtain
\[
	\Norm{u[F_j]\cdot\nabla\theta_j}_2
	\leq
	\Norm{u[F_j]}_{L^\infty(E_j(t))}
	\Norm{\nabla\theta_j}_2\leq
	C_{\mathrm{uni}}M_jD_j^{-1}\mathcal E(t).
\]
Since $u[\widetilde\theta]$ is divergence-free and $W_j(0)=0$,
we apply the $L^2$ energy estimate to
\eqref{eq:component-difference} and use the preceding bounds
to obtain, for $j\geq j_0$,
\begin{equation*}
	\Norm{W_j(t)}_2
	\leq
	C_{\mathrm{uni}}M_j
	\int_0^t\Norm{W_j(\zeta)}_2\,\dd\zeta
	+
	C_{\mathrm{uni}}M_jD_j^{-1}
	\int_0^t\mathcal E(\zeta)\,\dd\zeta.
\end{equation*}
Gronwall's inequality then gives
\[
\Norm{W_j(t)}_2
\leq
C_{\mathrm{uni}}M_jD_j^{-1}e^{C_{\mathrm{uni}}M_jT}
\int_0^t\mathcal E(\zeta)\,\dd\zeta
\leq
C_{\mathrm{uni}}2^{-j-12}
\int_0^t\mathcal E(\zeta)\,\dd\zeta,
\]
where the last inequality follows from
\eqref{eq:uniqueness-distance}.

For $1\leq j<j_0$, we instead use the global $L^2$ boundedness
of the Riesz transforms and $\Norm{W(t)}_2\leq\mathcal E(t)$.
The same energy estimate yields
\begin{equation*}
	\Norm{W_j(t)}_2
	\leq
	C_{\mathrm{uni}}M_j
	\int_0^t\mathcal E(\zeta)\,\dd\zeta.
\end{equation*}
The component estimates hold for almost every $t\in[0,T]$.
Their summability lets us add them over $j$ to obtain
\[
\mathcal E(t)
=
\sum_{j\geq1}\Norm{W_j(t)}_2
\leq
A_*\int_0^t\mathcal E(\zeta)\,\dd\zeta.
\]
where
\[
A_*:=
C_{\mathrm{uni}}
\left(
\sum_{j=1}^{j_0-1}M_j
+\sum_{j=j_0}^{\infty}2^{-j-12}
\right)
<\infty.
\]
Choose $h_0>0$ such that $A_*h_0\leq1/2$.
If $\mathcal E=0$ almost everywhere on $(0,t_0)$ for some
$t_0\in[0,T)$, the integral inequality above reduces to
\[
\mathcal E(t)
\leq
A_*\int_{t_0}^t\mathcal E(\zeta)\,\dd\zeta
\quad\text{for almost every }t\in[t_0,T].
\]
Set $h:=\min\{h_0,T-t_0\}$. Taking the essential supremum
over $(t_0,t_0+h)$, we obtain
\[
\Norm{\mathcal E}_{L^\infty(t_0,t_0+h)}
\leq
A_*h\Norm{\mathcal E}_{L^\infty(t_0,t_0+h)}
\leq
\frac12\Norm{\mathcal E}_{L^\infty(t_0,t_0+h)}.
\]
Since $\mathcal E\in L^\infty(0,T)$, this implies that
$\mathcal E=0$ almost everywhere on $(t_0,t_0+h)$.
Starting with $t_0=0$ and repeating this argument with the
same $h_0$, we conclude after finitely many steps that
$\mathcal E=0$ almost everywhere on $[0,T]$.

Hence $\theta(t)=\widetilde\theta(t)$ in $L^2(\R^2)$
for almost every $t\in[0,T]$.
By the continuity of both solutions in $L^2$, we conclude that
\[
\theta(t)=\widetilde\theta(t)
\quad\text{in }L^2(\R^2),~t\in[0,T].
\]
This proves uniqueness and hence completes the proof of
\cref{thm:main-loss}.
\end{proof}

\appendix
\section{Radial shell formulas for Euler and SQG}
\label{app:radial-kernel}

The shell formulas below justify the radial shear used in
\cref{lem:nondegenerate}. They also show why the positions of the
radial and oscillatory annuli differ for Euler and SQG.

For Euler, \cref{lem:appendix-euler-shell} shows that an outer radial
profile produces no velocity in the inner region. Thus a radial
background must lie inside the oscillatory annulus to generate shear.
For SQG, \cref{lem:appendix-SQG-shear} shows that a radial profile can
generate nondegenerate angular shear on a separated annulus on either
side of its support. We place the radial annulus outside the oscillatory one.

\begin{lemma}[Euler shell formula]
	\label{lem:appendix-euler-shell}
	For radial $\omega\in C_c^\infty(\R^2)$, the two-dimensional
	Euler Biot--Savart law gives
	$u_E[\omega]=u_{E,\varphi}[\omega](r)\evarphi$, where
	\begin{equation}\label{eq:appendix-euler-shell}
		u_{E,\varphi}[\omega](r)
		=
		\frac{c_E}{r}\int_0^r \rho\omega(\rho)\,\dd\rho,
		\quad r>0,
	\end{equation}
	where $c_E\neq0$ is independent of $r$ and $\omega$.
\end{lemma}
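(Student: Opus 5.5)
The plan is to compute the velocity in polar coordinates from the stream function. Since $\omega$ is radial, the Euler stream function $\psi_E:=\Delta^{-1}\omega$ is radial. Then $u_E[\omega]=\nabla^\perp\psi_E=\partial_r\psi_E(r)\,\evarphi$, exactly as in the proof of \cref{lem:radial}. This already shows that the velocity is purely angular, with $u_{E,\varphi}[\omega](r)=\partial_r\psi_E(r)$. It then remains to identify $\partial_r\psi_E$.

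The key step is to integrate the radial Poisson equation. For radial functions,
\[
\Delta\psi_E=\frac1r\partial_r\bigl(r\,\partial_r\psi_E\bigr)=\omega(r).
\]
Multiplying by $r$ and integrating from $0$ to $r$ gives
\[
r\,\partial_r\psi_E(r)-\lim_{\epsilon\to0}\epsilon\,\partial_r\psi_E(\epsilon)=\int_0^r\rho\,\omega(\rho)\,\dd\rho .
\]
The boundary term at the origin vanishes because $\psi_E=\frac{1}{2\pi}\log|\cdot|*\omega$ is smooth near $0$, so $\partial_r\psi_E$ is bounded there. This yields \eqref{eq:appendix-euler-shell} with $c_E=1$. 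With another sign convention for the Biot--Savart law (for instance $u_E=-\nabla^\perp\Delta^{-1}\omega$), one instead gets $c_E=\pm1$, and $c_E$ is in any case independent of $r$ and $\omega$.

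Alternatively, the formula follows from Gauss's theorem. Write $u_E=K_E*\omega$ with $K_E(z)=\frac{1}{2\pi}\frac{z^\perp}{|z|^2}$. Averaging over the circle of radius $r$, the circulation identity gives
\[
2\pi r\,u_{E,\varphi}(r)=\int_{B_r}\omega=2\pi\int_0^r\rho\,\omega(\rho)\,\dd\rho .
\]

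The main point needing care is justifying the vanishing boundary term and the regularity of $\psi_E$ at the origin. This is routine, since $\omega\in C_c^\infty$ implies that $\psi_E$ is smooth. The consequence stated in the appendix is that $u_{E,\varphi}$ vanishes for $r$ below $\inf\supp\omega$. An outer shell therefore induces no velocity in its interior.
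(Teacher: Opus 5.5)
Your argument is correct, but it takes a different route from the paper. The paper never uses the differential equation $\Delta\psi=\omega$. Instead it computes the angular average of the logarithmic kernel explicitly, via the expansion $\log|1-ae^{i\alpha}|=-\sum_{n\geq1}\frac{a^n}{n}\cos(n\alpha)$. This gives $\frac{1}{2\pi}\int_0^{2\pi}\log|re^{i\theta}-\rho e^{i\phi}|\,\dd\phi=\log\max\{r,\rho\}$, and hence the closed formula $\psi_\omega(r)=c_E\bigl[\log r\int_0^r\rho\omega\,\dd\rho+\int_r^\infty\log\rho\,\rho\omega\,\dd\rho\bigr]$. Differentiating this, the two boundary terms $\pm c_E\,r\log r\,\omega(r)$ cancel.

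Your route integrates the radial Poisson equation $\frac1r(r\psi_E')'=\omega$ from the origin, or equivalently uses the circulation/Stokes identity. It is shorter, needs only the smoothness of $\psi_E$ at $0$, and makes the vanishing of the velocity inside an outer shell immediate. The paper's kernel-averaging approach buys uniformity with the SQG shell formula in \cref{lem:appendix-SQG-shear}, which is proved by the same device: averaging $|x-y|^{-1}$ over angles via a binomial series. Your method has no analogue there. Since $\Lambda=(-\Delta)^{1/2}$ is nonlocal, there is no radial ODE or Gauss law that localizes $\psi_f'(r)$ to the mass inside $B_r$. That failure is precisely why an outer radial SQG profile does generate shear in its interior. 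Your proof is thus the natural one for Euler, while the paper's choice is driven by the Euler/SQG contrast the appendix is meant to exhibit.
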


\begin{proof}
	In complex notation, write $x=re^{i\theta}$ and $y=\rho e^{i\phi}$. For $0\leq a<1$,
	\[
	\log|1-ae^{i\alpha}|
	=
	-\sum_{n=1}^\infty \frac{a^n}{n}\cos(n\alpha).
	\]
Integration in $\alpha$ gives
	\[
	\frac{1}{2\pi}\int_0^{2\pi}
	\log|1-ae^{i\alpha}|\,\dd\alpha
	=0.
	\]
After extracting the factor $\max\{r,\rho\}$, we obtain
	\begin{equation}\label{eq:appendix-euler-average}
		\frac{1}{2\pi}\int_0^{2\pi}
		\log|re^{i\theta}-\rho e^{i\phi}|\,\dd\phi
		=
		\log\max\{r,\rho\},
		\quad
		r\neq\rho.
	\end{equation}
	For a fixed constant $c_E\neq0$, the stream function associated with
	$\omega$ has the logarithmic representation
		\[
	\psi_\omega(r)
	=
	\frac{c_E}{2\pi}
	\int_0^\infty\int_0^{2\pi}
	\log
	\left|
	x-\rho(\cos\phi,\sin\phi)
	\right|
	\omega(\rho)\rho\,\dd\phi\dd\rho.
	\]
	Using \eqref{eq:appendix-euler-average} and splitting the radial
	integral at $\rho=r$, we find
	\[
	\psi_\omega(r)
	=
	c_E\left[
	\log r\int_0^r\rho\omega(\rho)\,\dd\rho
	+
	\int_r^\infty
	\log\rho\,\rho\omega(\rho)\,\dd\rho
	\right].
	\]
The boundary terms cancel; hence
	\[
	\psi_\omega'(r)
	=
	\frac{c_E}{r}
	\int_0^r\rho\omega(\rho)\,\dd\rho.
	\]
	Since $u_{E,\varphi}[\omega]=\psi_\omega'$, this proves
	\eqref{eq:appendix-euler-shell}.
\end{proof}

If a radial vorticity $\omega$ is supported outside $B_R$, then
\eqref{eq:appendix-euler-shell} gives
\[
u_E[\omega](x)=0,
\qquad
|x|<R.
\]
Thus an outer radial profile induces no velocity, and hence no shear,
in the inner region. On the other hand, if the profile is supported
inside the observation radius and
\[
\Gamma:=\int_0^\infty\rho\omega(\rho)\,\dd\rho\neq0,
\]
then
\[
u_{E,\varphi}[\omega](r)=\frac{c_E\Gamma}{r},
\quad
\frac{\dd}{\dd r}
\left(\frac{u_{E,\varphi}[\omega](r)}r\right)
=
-\frac{2c_E\Gamma}{r^3}\neq0.
\]

\begin{lemma}[SQG shell formula]
	\label{lem:appendix-SQG-shear}
	Let $f\in C_c^\infty(\R^2)$ be radial, nonnegative and not
	identically zero. Define
	\[
	\psi_f:=\Lambda^{-1}f,
	\quad
	\Omega_f(r):=\frac{\psi_f'(r)}{r},
	\quad r>0.
	\]
	For $k\geq0$, set
	\[
	a_k
	:=
	\left(
	\frac{\binom{2k}{k}}{4^k}
	\right)^2>0.
	\]
	Then, with a fixed dimensional constant $c_S>0$, the following
	statements hold.
	\begin{enumerate}[label=\textup{(\roman*)}]
		\item
		Let $0<a<b$ and suppose that $\supp f\subset(a,b)$. Set
		\[
		M_k:=\int_a^b f(\rho)\rho^{-2k}\,\dd\rho>0,
		\quad k\geq0.
		\]
		Then, for $0<r<a$,
		\begin{equation}\label{eq:appendix-outer-shear}
			\Omega_f'(r)
			=
			c_S\sum_{k=2}^\infty
			2k(2k-2)a_kM_kr^{2k-3}
			\geq
			8c_Sa_2M_2r>0,
		\end{equation}
		and
		\begin{equation}\label{eq:appendix-outer-curvature}
			\Omega_f''(r)
			=
			c_S\sum_{k=2}^\infty
			2k(2k-2)(2k-3)a_kM_kr^{2k-4}
			\geq
			8c_Sa_2M_2>0.
		\end{equation}
		\item
		Let $0<a<b$ and suppose that $\supp f\subset(a,b)$. Set
		\[
		\widetilde M_k
		:=
		\int_a^b f(\rho)\rho^{2k+1}\,\dd\rho>0,
		\quad k\geq0.
		\]
		Then, for $r>b$,
		\begin{equation}\label{eq:appendix-inner-shear}
			\Omega_f'(r)
			=
			c_S\sum_{k=0}^\infty
			(2k+1)(2k+3)a_k\widetilde M_kr^{-2k-4}
			>0,
		\end{equation}
		while
		\begin{equation}\label{eq:appendix-inner-shear-1}
			\Omega_f''(r)
			=
			-c_S\sum_{k=0}^\infty
			(2k+1)(2k+3)(2k+4)
			a_k\widetilde M_kr^{-2k-5}
			<0.
		\end{equation}
		
	\item
	Let $0<a<b$ and suppose that $\supp f\subset(a,b)$.
	Then the function $x\mapsto\Omega_f(|x|)$ extends smoothly to
	$\R^2$. Moreover,
		for every multi-index $\nu$ with $|\nu|\leq3$,
		\begin{equation}\label{eq:appendix-Omega-global}
			\left|
			\partial_x^\nu\bigl(\Omega_f(|x|)\bigr)
			\right|
			\leq
			C_{f,\nu}(1+|x|)^{-3-|\nu|},
			\quad x\in\R^2.
		\end{equation}
	\end{enumerate}
\end{lemma}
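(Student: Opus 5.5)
The plan is to compute $\psi_f=\Lambda^{-1}f$ explicitly through the kernel $c|x-y|^{-1}$ and expand the angular average in powers of $r/\rho$ (or $\rho/r$).

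\textbf{Angular average.} In two dimensions $\Lambda^{-1}f(x)=c_S'\int_{\R^2}|x-y|^{-1}f(y)\dd y$ with $c_S'>0$. For $r\neq\rho$, write $t=\min\{r,\rho\}/\max\{r,\rho\}<1$. Then
\[
\frac1{2\pi}\int_0^{2\pi}\frac{\dd\phi}{|re^{i\theta}-\rho e^{i\phi}|}
=\frac{1}{\max\{r,\rho\}}\,\frac1{2\pi}\int_0^{2\pi}\bigl|1-te^{i\alpha}\bigr|^{-1}\dd\alpha .
\]
Expanding $(1-te^{i\alpha})^{-1/2}=\sum_k\binom{2k}{k}4^{-k}t^ke^{ik\alpha}$ and its conjugate, and using Parseval, gives $\sum_k a_kt^{2k}$. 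This is where the coefficients $a_k$ come from. The series converges for $t<1$, and the bound $a_k\lesssim 1/k$ gives uniform convergence of all termwise derivatives on compact sets of $t<1$.

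\textbf{Part (i).} For $r<a$ every $\rho\in\supp f$ satisfies $\rho>r$. Integrating in $\rho$ with the weight $\rho f(\rho)\cdot 2\pi$ and normalizing gives
\[
\psi_f(r)=c_S\sum_{k\ge0}a_kM_kr^{2k}.
\]
The powers $\rho^{-2k-1}\cdot\rho=\rho^{-2k}$ match the definition of $M_k$. Then
\[
\Omega_f=\psi_f'/r=c_S\sum_{k\ge1}2k\,a_kM_kr^{2k-2},
\]
and differentiating termwise yields the stated formulas for $\Omega_f'$ and $\Omega_f''$. The $k=1$ term is constant, so it drops out. All remaining terms are nonnegative because $M_k>0$, so keeping only the $k=2$ term gives the lower bounds $8c_Sa_2M_2r$ and $8c_Sa_2M_2$. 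The normalization of $c_S$ should absorb the constant $c_S'$ from the kernel.

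\textbf{Part (ii).} For $r>b$ the roles are reversed:
\[
\psi_f(r)=c_S\sum_k a_k\widetilde M_kr^{-2k-1},
\qquad
\Omega_f=-c_S\sum_k(2k+1)a_k\widetilde M_kr^{-2k-3}.
\]
Differentiating once and twice gives the stated series, with the claimed signs.

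\textbf{Part (iii).} Near the origin ($r<a$), $\Omega_f$ is a power series in $r^2=|x|^2$ with radius of convergence $\ge a^2$. It is therefore a real-analytic function of $x$ near $0$, hence smooth there.

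Away from the origin, $\psi_f=\Lambda^{-1}f$ is smooth because it is the convolution of $|x|^{-1}$ with a $C_c^\infty$ function. Hence $\Omega_f=\psi_f'/r$ is smooth on $r>0$.

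For the decay, on $|x|>2b$ I would use the series of part (ii). Each term is a smooth homogeneous function of degree $-2k-3$, and the series is dominated by $C\widetilde M_0|x|^{-3}$. Derivatives of order $|\nu|$ gain $|x|^{-|\nu|}$, with uniformly summable constants since $\widetilde M_k\le b^{2k}\widetilde M_0$ and $(b/|x|)^{2k}\le 4^{-k}$. On the compact set $|x|\le 2b$, smoothness alone gives bounded derivatives.

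\textbf{Main obstacle.} The main difficulty is justifying the Parseval computation and the termwise interchange of sums and integrals uniformly. In part (iii), one must also make sure the small-$r$ series is genuinely a series in $|x|^2$, so that no odd powers of $|x|$ appear; this is what makes $\Omega_f(|x|)$ smooth at the origin. Both points follow from the explicit even-power structure $t^{2k}$ found in the angular average.
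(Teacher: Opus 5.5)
Your proposal is correct and follows essentially the same route as the paper. Both arguments expand $(1-te^{\pm i\alpha})^{-1/2}$ binomially to get the angular average $\max\{r,\rho\}^{-1}\sum_k a_k t^{2k}$, and both differentiate the resulting interior and exterior series for $\psi_f$ term by term; for part (iii) both use three regions (an even power series in $|x|^2$ near the origin, smoothness of $\Lambda^{-1}f$ on the intermediate annulus, and the exterior series with geometrically summable coefficients). Your use of Parseval on $|(1-te^{i\alpha})^{-1/2}|^2$ is just a compact form of the paper's step of integrating the product of the two binomial series in $\beta$.
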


\begin{proof}
	Fix $0<a<b$ with $\supp f\subset(a,b)$.
	The kernel representation of $\Lambda^{-1}$ gives
	\begin{equation}\label{eq:appendix-radial-potential}
		\psi_f(r)
		=
		c_\Lambda
		\int_0^\infty\int_0^{2\pi}
		\frac{f(\rho)\rho}
		{\left(r^2+\rho^2-2r\rho\cos\beta\right)^{1/2}}
		\,\dd\beta\dd\rho,
		\quad c_\Lambda>0.
	\end{equation}
	Set $r_>:=\max\{r,\rho\}$, $r_<:=\min\{r,\rho\}$ and
	$\eta:=r_</r_>$. For $r\neq\rho$, we have $0\leq\eta<1$ and
	\[
	\frac{1}
	{\left(r^2+\rho^2-2r\rho\cos\beta\right)^{1/2}}
	=
	\frac{1}{r_>}
	(1-\eta e^{i\beta})^{-1/2}
	(1-\eta e^{-i\beta})^{-1/2}.
	\]
	Using
	\[
	(1-z)^{-1/2}
	=
	\sum_{n=0}^\infty b_nz^n,
	\quad
	b_n:=\frac{\binom{2n}{n}}{4^n},
	\]
	we obtain
	\[
	(1-\eta e^{i\beta})^{-1/2}
	(1-\eta e^{-i\beta})^{-1/2}
	=
	\sum_{m,n\geq0}
	b_mb_n\eta^{m+n}e^{i(m-n)\beta}.
	\]
	Integration in $\beta$ eliminates the terms with $m\neq n$.
	Since $a_k=b_k^2$, this yields
	\begin{equation}\label{eq:appendix-SQG-average}
		\frac{1}{2\pi}\int_0^{2\pi}
		\frac{\dd\beta}
		{\left(r^2+\rho^2-2r\rho\cos\beta\right)^{1/2}}
		=
		\frac{1}{r_>}
		\sum_{k=0}^\infty
		a_k\left(\frac{r_<}{r_>}\right)^{2k}.
	\end{equation}
	Because $\supp f$ is compactly contained in $(a,b)$,
	$r_</r_>$ is uniformly bounded by some $\eta_0<1$ for
	$\rho\in\supp f$ in both regions $0<r<a$ and $r>b$.
	Consequently, the series and its derivatives of any fixed order
	in $r$ converge absolutely and uniformly.

	For $0<r<a$, we substitute
	\eqref{eq:appendix-SQG-average} into
	\eqref{eq:appendix-radial-potential} to obtain
	\[
	\psi_f(r)
	=
	c_S\sum_{k=0}^\infty a_kM_kr^{2k},
	\quad c_S:=2\pi c_\Lambda>0,
	\]
	and hence
	\[
	\Omega_f(r)
	=
	c_S\sum_{k=1}^\infty
	2k\,a_kM_kr^{2k-2}.
	\]
	Differentiating once and twice gives
	\eqref{eq:appendix-outer-shear} and
	\eqref{eq:appendix-outer-curvature}, respectively. Their lower
	bounds follow by retaining the $k=2$ term, since all terms are
	nonnegative.
	
	For $r>b$, we have $\rho<r$ on $\supp f$ and obtain
	\[
	\psi_f(r)
	=
	c_S\sum_{k=0}^\infty
	a_k\widetilde M_kr^{-2k-1},
	\]
	and therefore
	\[
	\Omega_f(r)
	=
	-c_S\sum_{k=0}^\infty
	(2k+1)a_k\widetilde M_kr^{-2k-3}.
	\]
	Differentiating once and twice proves
	\eqref{eq:appendix-inner-shear} and
	\eqref{eq:appendix-inner-shear-1}.
	
We prove \textup{(iii)} by considering three regions.

\smallskip
\noindent\emph{Near the origin: $|x|\leq a/2$.}
The interior expansion gives
\[
\Omega_f(|x|)
=
c_S\sum_{k=1}^\infty
2k\,a_kM_k(x_1^2+x_2^2)^{k-1},
\quad
M_k=\int_a^b f(\rho)\rho^{-2k}\,\dd\rho.
\]
Since $M_k\leq a^{-2k}\int_a^b|f(\rho)|\,\dd\rho$, this series
and its derivatives of every fixed order converge uniformly on
$|x|\leq a/2$. It therefore defines a smooth extension through
the origin, with
\[
\sup_{|x|\leq a/2}
\left|
\partial_x^\nu\bigl(\Omega_f(|x|)\bigr)
\right|
\leq C_{f,\nu},
\quad |\nu|\leq3.
\]

\smallskip
\noindent\emph{The intermediate annulus: $a/2\leq|x|\leq2b$.}
Since $f\in C_c^\infty(\R^2)$, we have
$\psi_f=\Lambda^{-1}f\in C^\infty(\R^2)$.
The identity $\Omega_f(r)=\psi_f'(r)/r$ thus gives
\[
\sup_{a/2\leq|x|\leq2b}
\left|
\partial_x^\nu\bigl(\Omega_f(|x|)\bigr)
\right|
\leq C_{f,\nu},
\quad |\nu|\leq3.
\]

\smallskip
\noindent\emph{The exterior region: $|x|\geq2b$.}
The exterior expansion gives
\[
\Omega_f(|x|)
=
-c_S\sum_{k=0}^\infty
(2k+1)a_k\widetilde M_k|x|^{-2k-3},
\quad
\widetilde M_k
=
\int_a^b f(\rho)\rho^{2k+1}\,\dd\rho.
\]
For every multi-index $\nu$ with $|\nu|\leq3$,
\[
\left|\partial_x^\nu|x|^{-2k-3}\right|
\leq
C_\nu(1+k)^{|\nu|}|x|^{-2k-3-|\nu|}.
\]
Using $a_k\leq1$ and $\widetilde M_k\leq C_f b^{2k}$,
we may differentiate the series term by term, since
$b/|x|\leq1/2$. This yields
\[
\begin{aligned}
	\left|
	\partial_x^\nu\bigl(\Omega_f(|x|)\bigr)
	\right|
	&\leq
	C_{f,\nu}|x|^{-3-|\nu|}
	\sum_{k=0}^\infty
	(1+k)^{1+|\nu|}
	\left(\frac{b}{|x|}\right)^{2k}\\
	&\leq
	C_{f,\nu}|x|^{-3-|\nu|}.
\end{aligned}
\]

On $|x|\leq2b$, the weight $(1+|x|)^{-3-|\nu|}$ is bounded
below by a positive constant. On $|x|\geq2b$, it is comparable
to $|x|^{-3-|\nu|}$. The three estimates therefore prove
\eqref{eq:appendix-Omega-global}.	
\end{proof}

    \section*{Acknowledgement}
    The authors thank to Prof. Diego Córdoba for his valuable suggestions.
This project was supported by the National Key Research and Development Program of China (Grant No. 2022YFA1005700). Q. Chen was partially supported by the National Natural Science Foundation of
China (Grant No. 12471149). 
C. Miao was partially supported by the National Natural Science Foundation of China (Grant Nos. 12371095 and 12531005).

\end{document}